\newtheorem{theorem}{Theorem}[section]
\newtheorem{corollary}[theorem]{Corollary}
\newtheorem{lemma}[theorem]{Lemma}
\newtheorem*{conjecture}{Conjecture}
\newtheorem*{hypothesis}{Hypothesis}
\newtheorem{proposition}[theorem]{Proposition}
\numberwithin{equation}{subsection}
\theoremstyle{remark}
\newtheorem{remark}[theorem]{Remark}
\renewcommand{\leq}{\leqslant}
\renewcommand{\geq}{\geqslant}
\newcommand{\T}[1]{{}^t{{#1}}}
\newcommand{\Q}{\mathbb Q}
\renewcommand{\H}{\mathbb H}
\newcommand{\Z}{\mathbb Z}
\newcommand{\Pp}{\mathbb P}
\newcommand{\C}{\mathbb C}
\newcommand{\R}{\mathbb R}
\newcommand{\A}{\mathbb A}
\newcommand{\bs}{\backslash}
\newcommand{\B}{\mathcal B}
\newcommand{\p}{\mathfrak p}
\renewcommand{\L}{\mathcal L}
\newcommand{\Tr}{\mathrm{Tr}}
\newcommand{\Sp}{\mathrm{Sp}}
\newcommand{\GSp}{\mathrm{GSp}}
\newcommand{\USp}{\mathrm{USp}}
\newcommand{\GL}{\mathrm{GL}}
\newcommand{\SO}{\mathrm{SO}}
\newcommand{\SL}{\mathrm{SL}}
\newcommand{\USpin}{\mathrm{USpin}}
\newcommand{\Spin}{\mathrm{Spin}}
\newcommand{\cusp}{\mathcal{S}}
\newcommand{\classical}{\mathcal{H}}
\newcommand{\field}{\mathcal{K}}
\newcommand{\weight}{\omega}
\DeclareMathOperator{\Imag}{Im}
\DeclareMathOperator{\Reel}{Re}
\DeclareMathOperator{\Cl}{Cl}
\DeclareMathOperator{\disc}{disc}
\DeclareMathOperator{\Aut}{Aut}
\newcommand{\sop}{\uple{S}}
\newcommand{\eps}{\varepsilon}
\newcommand{\proba}{\mathbf{P}}
\newcommand{\expect}{\mathbf{E}}
\newcommand{\mods}[1]{\,(\mathrm{mod}\,{#1})}
\newcommand{\ra}{\rightarrow}
\newcommand{\uple}[1]{\text{\boldmath${#1}$}}
\begin{document}

\title[Local equidistribution for Siegel modular forms] {Local
  spectral equidistribution for Siegel modular forms and applications}

\author{Emmanuel Kowalski}
\address{ETH Z\"urich -- D-MATH\\
  R\"amistrasse 101\\
  8092 Z\"urich\\
  Switzerland} \email{kowalski@math.ethz.ch}
\author{Abhishek Saha}
\address{ETH Z\"urich -- D-MATH\\
  R\"amistrasse 101\\
  8092 Z\"urich\\
  Switzerland} \email{abhishek.saha@math.ethz.ch}
\author{Jacob Tsimerman}
\address{Princeton University Maths. Dept
\\ Fine Hall\\ Washington Road, Princeton
  NJ 08540, USA}
\email{jtsimerm@math.princeton.edu}

\subjclass[2000]{11F46, 11F30, 11F66, 11F67, 11F70} 

\keywords{Siegel cusp forms, families of $L$-functions, local spectral
  equidistribution, $L$-functions, B\"ocherer's Conjecture, low-lying
  zeros}

\begin{abstract}
  We study the distribution, in the space of Satake parameters, of
  local components of Siegel cusp forms of genus $2$ and growing
  weight $k$, subject to a specific weighting which allows us to apply
  results concerning Bessel models and a variant of Petersson's
  formula. We obtain for this family a quantitative local
  equidistribution result, and derive a number of consequences. In
  particular, we show that the computation of the density of low-lying
  zeros of the spinor $L$-functions (for restricted test functions)
  gives global evidence for a well-known conjecture of B\"ocherer
  concerning the arithmetic nature of Fourier coefficients of Siegel
  cusp forms.
\end{abstract}

\bibliographystyle{plain}

\maketitle

\section{Introduction}

\subsection{Motivation}

The motivation behind this paper lies in attempts to understand what
is a correct definition of a family of cusp forms, either on $\GL(n)$
or on some other reductive algebraic group.  The basic philosophy (or
strategy) underlying our work is the following form of local-global
principle: given a ``family'' $\Pi$ of cusp forms, for any finite
place $v$, the local components $\pi_v$ of the elements $\pi \in \Pi$
(which are represented as restricted tensor products $\pi=\otimes
\pi_v$ over all places) should be well-behaved, and more specifically,
under averaging over finite subsets of the family, $(\pi_v)$ should
become equidistributed with respect to a suitable measure
$\mu_v$. Readers already familiar with work on families of Dirichlet
characters, classical modular forms on $\GL(2)$, or families of
$L$-functions of abelian varieties, will recognize that this principle
is implicit in much of these works, through the orthogonality
relations for Dirichlet characters, the trace formula (or the
Petersson formula) and the ``vertical'' Sato-Tate laws over finite
fields. The expected outcome is that, for instance, averages over the
family of values of $L$-functions $L(s_0,\pi)$ at some point $s_0$, at
least on the right of the critical line, should be directly related to
the Euler product corresponding to local averages computed using
$\mu_v$. (For a general informal survey of this point of view,
see~\cite{ksurvey}).
\par
One of our goals is to give an example where this strategy can be
implemented in the case of holomorphic cusp forms on $\GSp(4)$ (i.e.,
Siegel modular forms), and to derive some applications of it.  In
particular, we will prove:\footnote{\ Unfamiliar notation will be
  explained later.}

\begin{theorem}\label{th-average-l-functions}
  For $k\geq 2$, let $\cusp_k^*$ be a Hecke basis of the space of
  Siegel cusp forms on $\Sp(4,\Z)$, and let $\cusp_k^{\flat}$ be the
  set of those $F\in\cusp_k^*$ which are \emph{not} Saito-Kurokawa
  lifts. For $F\in \cusp_k^*$, let
$$
F(Z)=\sum_{T>0}a(F,T)e(\Tr (TZ))
$$
be its Fourier expansion, where $T$ runs over symmetric positive-definite
semi-integral  matrices, and let
\begin{equation}\label{e:weight}
\weight^F_{k} = \sqrt{\pi} (4 \pi)^{3-2k} \Gamma(k -
\tfrac{3}{2})\Gamma(k - 2) \frac{|a(F,1)|^2}{4\langle F, F\rangle}.
\end{equation}
\par
Let $L(F,s)$ denote the finite part of the spin $L$-function of $F$,
an Euler product of degree $4$ over primes with local factors of the
form
$$
L_p(F,s)=(1-a_pp^{-s})^{-1} (1-b_pp^{-s})^{-1} (1-a_p^{-1}p^{-s})^{-1}
(1-b_p^{-1}p^{-s})^{-1},\quad\quad a_p,\ b_p\in (\C^{\times})^2.
$$
\par
Then, for any $s\in\C$ such that $\Reel(s)>1$, we have
\begin{equation}\label{e:firstaverage}
  \lim_{k\ra +\infty }{\sum_{F\in \cusp_{2k}^{\flat}}
{\weight^F_{2k}L(F,s)}} = \zeta(s +
  \tfrac{1}{2})L(\chi_4, s+\tfrac{1}{2}),
\end{equation}
where $\zeta(s)$ denotes the Riemann zeta function and $L(\chi_4, s)$
is the $L$-function associated to the unique Dirichlet character of
conductor $4$. For $s\not=3/2$, one can replace $\cusp_{2k}^{\flat}$
with $\cusp_{2k}^*$.
\par
More generally, for all primes $p$ there exist measures $\mu_p$ on
$(\C^\times)^2$, which are in fact supported on $(S^1)^2$, with the
following property: for any irreducible $r$-dimensional representation
$\rho$ of $\GSp(4,\C)$, let $L(F,\rho, s)$ denote the associated
Langlands $L$-function, an Euler product of degree $r\geq 1$ over
primes with local factors of the form
$$
L_p(F,\rho,s)=\prod_{i=1}^r{(1- Q_{i}(a_p, b_p)p^{-s})^{-1}}
,\quad\quad a_p,\ b_p\in (\C^{\times})^2.
$$
where $Q_{i}(x,y)$ is a polynomial in $x$, $y$, $x^{-1}$,
$y^{-1}$. Then for any $s\in\C$ such that $\Reel(s)>s_0$, with $s_0$
depending on $\rho$, we have
\begin{equation}\label{e:general-rho}
\lim_{k\ra +\infty}{\sum_{F\in \cusp_{2k}^*}{\weight^F_{2k}L(F,\rho, s)}}
=\prod_{p}{
\int{
\prod_{i=1}^r(1- Q_{i}(a, b)p^{-s})^{-1}d\mu_p(a,b)}
},
\end{equation}
where the right-hand side converges absolutely.
\end{theorem}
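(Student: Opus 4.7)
The plan is to implement the local-global strategy described in the introduction: expand each Euler product as a Dirichlet series indexed by integers $n$, interchange the order of summation, and use a variant of Petersson's formula (adapted to the Fourier-coefficient weight $\weight^F_{2k}$) to evaluate the inner average over $F$. First, I would write
$$
L(F,\rho,s)=\sum_{n\geq 1}\frac{\lambda_\rho(F,n)}{n^s},\qquad L(F,s)=\sum_{n\geq 1}\frac{\lambda(F,n)}{n^s},
$$
where $\lambda_\rho(F,n)$ is a polynomial in the Satake parameters at the primes dividing $n$ (so that it factors as $\prod_{p\mid n}$-local data). Weissauer's Ramanujan bounds give an absolutely convergent Dirichlet series for $\Reel(s)>s_0(\rho)$, uniformly in $F$, so one can exchange the sum over $n$ with the sum over $F$.

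The next step is the heart of the argument. The weight $\weight^F_{2k}$ contains $|a(F,1)|^2/\langle F,F\rangle$, and Hecke theory lets us convert $\lambda_\rho(F,n)\,\overline{a(F,1)}$ into a finite linear combination $\sum_i c_{\rho,n}(S_i)\,\overline{a(F,S_i)}$ of Fourier coefficients at matrices $S_i$ depending only on $n$ and $\rho$. Substituting this identity and applying the Kitaoka--Petersson-type formula for $\sum_F \weight^F_{2k}\,a(F,1)\,\overline{a(F,S_i)}/(\text{normalization})$ splits into a diagonal contribution, coming from the $S_i$ that are $\SL(2,\Z)$-equivalent to the identity, plus a Kloosterman/Bessel-transform off-diagonal. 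By standard estimates (Bessel function decay for large weight, Weil-type bounds for the Kloosterman sums), the off-diagonal terms tend to $0$ as $k\to +\infty$ at a rate uniform in $n$ over any compact region, justifying the interchange of limits. In the limit, only the diagonal survives.

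To identify this diagonal with an Euler product over the measures $\mu_p$, note that the coefficients $c_{\rho,n}(S_i)$ for $S_i$ lying in the $\SL(2,\Z)$-orbit of the identity are determined by purely local data at primes dividing $n$. Multiplicativity of $\lambda_\rho(F,n)$ over coprime $n$ (combined with the local Bessel-model computations in Furusawa/Bump--Friedberg--Furusawa style) lets one assemble
$$
\lim_{k\to+\infty}\sum_{F\in\cusp_{2k}^{*}}\weight^F_{2k}\,L(F,\rho,s)=\prod_p\int\prod_{i=1}^r\bigl(1-Q_i(a,b)p^{-s}\bigr)^{-1}d\mu_p(a,b),
$$
where $\mu_p$ is precisely the measure on $(S^1)^2$ characterised by matching $\int (\text{polynomial in Satake data})\,d\mu_p$ to the local diagonal. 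Absolute convergence of the Euler product for $\Reel(s)>s_0$ is automatic from support in $(S^1)^2$. For the first identity in the theorem, I would then compute the local integrals for the spin L-function explicitly: the local factor is invariant under $a\mapsto a^{-1}$ and $b\mapsto b^{-1}$, which matches the symmetries of $\mu_p$, and the resulting Euler factor splits as $\zeta_p(s+\tfrac12)\,L_p(\chi_4,s+\tfrac12)$, where the appearance of $\chi_4$ reflects the discriminant of the identity Bessel datum.

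The main technical obstacles I expect are: (i) establishing the variant of Petersson's formula with error decaying uniformly in $n$, which requires careful control of Bessel transforms at large weight together with averaging over Fourier-coefficient matrices in a Hecke orbit; (ii) carrying out the local orbital/Bessel-model computations explicitly enough to identify the measures $\mu_p$ and, in the spin case, to recognise the $\chi_4$ factor; and (iii) the Saito--Kurokawa lifts, whose spin L-functions have poles at $s=3/2$: they must be excluded from $\cusp_{2k}^{\flat}$ to get a well-defined limit there, but their contribution decays away from $s=3/2$ since the total $\weight^F_{2k}$-mass of the Saito--Kurokawa locus tends to $0$, allowing the statement over $\cusp_{2k}^{*}$ elsewhere.
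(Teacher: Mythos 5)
Your proposal follows the paper's overall strategy and correctly identifies the main building blocks: Dirichlet series expansion and interchange of summation; a Kitaoka-style Petersson orthogonality formula for the weights $\weight^F_{2k}$; Weissauer's Ramanujan bound to justify dominated convergence; identification of the resulting diagonal limit with an Euler product; and the role of Saito--Kurokawa lifts (their total $\weight^F_{2k}$-mass tending to $0$). That skeleton matches the paper.

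However, the proposal contains a genuinely misleading step in a place the paper explicitly flags as subtle. You write that ``Hecke theory lets us convert $\lambda_\rho(F,n)\,\overline{a(F,1)}$ into a finite linear combination $\sum_i c_{\rho,n}(S_i)\overline{a(F,S_i)}$.'' For Siegel forms of genus two there is \emph{no} direct identification of Hecke eigenvalues with Fourier coefficients, and no analogue of the $\GL(2)$ Hecke multiplicativity relations that would let one do this by elementary manipulations. The conversion in question is precisely Theorem~\ref{t:relation}, obtained from the uniqueness of the Bessel model together with Sugano's explicit formula (Theorem~\ref{t:sugano}), and even then it produces \emph{sums over ideal-class representatives} of Fourier coefficients weighted by $\Lambda$, not individual ones. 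What one gets is an identity relating $\prod_p U_p^{l_p,m_p}(a_p,b_p)\cdot\sum_c\overline{\Lambda(c)}a(F,S_c)$ to $\sum_{c'}\overline{\Lambda(c')}a(F,S_{c'}^{L,M})$, and the $U_p^{l,m}$ are Macdonald-type polynomials whose span one must then relate to the $H_m$ occurring in the Dirichlet series coefficients (Proposition~\ref{pr:pol-basis}). This is the technical heart of the paper and cannot be subsumed under ``Hecke theory''; without it your step (ii) does not get off the ground.

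Two smaller inaccuracies: (a) absolute convergence of the limiting Euler product is not automatic from the fact that $\mu_p$ is supported on $(S^1)^2$ --- one needs the explicit formula (derived from equation~\eqref{e:onevar-sugano}) showing the local factor equals $\bigl(1-\lambda_p p^{-1/2-s}+\bigl(\tfrac{\field}{p}\bigr)p^{-1-2s}\bigr)^{-1}$, hence is $1+O(p^{-1/2-\Reel(s)})$; (b) for a general representation $\rho$ the estimate must be carried out on the monomials $H_m$ through the $U_p^{l,m}$ expansion, so the ``error decaying uniformly in $n$'' requires the bound of Lemma~\ref{lm:bounds-ulm} in addition to the Bessel and Kloosterman estimates you mention. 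If you make the Bessel-model step explicit and replace the claimed automatic convergence with the Sugano-based local computation, the remainder of your argument does reproduce the paper's proof.
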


The weight $\weight^F_{k}$ which is introduced in this theorem is
natural because of our main tool, which is a (rather sophisticated)
extension of the classical Petersson formula to the case of Siegel
modular forms of genus 2; see Propositions~\ref{proppetersson2}
and~\ref{pr:petersson}. In fact, we can work with more general weights
$\weight^F_{k,d,\Lambda}$ (as defined in the next section) which
involve averages of $a(F,T)$ over positive definite $T$ with a fixed
discriminant. 
\par
The quantitative local equidistribution leads naturally to a result on
the distribution of low-lying zeros:

\begin{theorem}[Low-lying zeros]\label{th:low-lying}
  Let $\varphi\,:\, \R\ra \R$ be an even Schwartz function such that
  the Fourier transform
$$
\hat{\varphi}(t)=\int_{\R}{\varphi(x)e^{-2i\pi xt}dx}
$$
has compact support contained in $[-\alpha,\alpha]$, where
$\alpha<4/15$. For $F\in \cusp_{2k}^*$, assume the Riemann Hypothesis:
the zeros of $L(F,s)$ in the critical strip $0<\Reel(s)<1$ are of the
form
$$
\rho=\frac{1}{2}+i\gamma
$$
with $\gamma\in\R$. Define 
$$
D_{\varphi}(F)=\sum_{\rho}{ \varphi\Bigl(\frac{\gamma}{\pi}\log
  k\Bigr) },
$$
where $\rho$ ranges over the zeros of $L(F,s)$ on the critical line,
counted with multiplicity.
\par
Then we have
\begin{equation}\label{eq:mock-unitary}
\lim_{k\ra +\infty}
\sum_{F\in\cusp_{2k}^*}{
\omega_{2k}^{F}D_{\varphi}(F)
}
=\int_{\R}{\varphi(x)d\sigma_{Sp}(x)},
\end{equation}
where $\sigma_{Sp}$ is the ``symplectic symmetry'' measure given by
$$
d\sigma_{Sp}=dx-\frac{\delta_0}{2},\quad
\quad
\text{ $\delta_0$ Dirac mass at 0}.
$$
\end{theorem}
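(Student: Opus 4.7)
The plan is to combine the Riemann--Weil explicit formula with the local equidistribution provided by Theorem~\ref{th-average-l-functions}. First, I apply the explicit formula to the spin $L$-function $L(F,s)$ of each $F\in\cusp_{2k}^*$; under the Riemann Hypothesis stated in the theorem, this yields an identity of the form
$$
D_\varphi(F) = \hat\varphi(0) + O\Bigl(\frac{1}{\log k}\Bigr)
- \frac{2}{\log k}\sum_{p,\nu\geq 1} \hat\varphi\Bigl(\frac{\nu\log p}{\log k}\Bigr)\frac{a_F(p^\nu)\log p}{p^{\nu/2}},
$$
where $a_F(p^\nu)=a_p^\nu+b_p^\nu+a_p^{-\nu}+b_p^{-\nu}$. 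The deterministic piece $\hat\varphi(0)$ is exactly the expected $\int dx$-contribution on the spectral side. For Saito--Kurokawa lifts, whose spin $L$-function factorizes as $\zeta(s+\tfrac12)\zeta(s-\tfrac12)L(f,s)$ and therefore has zeros off the critical line, the quantity $D_\varphi(F)$ defined in the theorem captures only the zeros of the surviving factor $L(f,s)$; these contributions can either be shown to be $o(1)$ in aggregate (using standard bounds on Saito--Kurokawa Petersson norms and Fourier coefficients) or handled separately by applying the explicit formula directly to $L(f,s)$.

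Next, I average the identity against the weights $\omega^F_{2k}$. Since $\sum_F\omega^F_{2k}\to 1$ (apply Theorem~\ref{th-average-l-functions} to the trivial representation, or take $\Reel(s)\to+\infty$ in the spin-$L$ average), the deterministic piece gives $\hat\varphi(0)$ in the limit. Swapping summation orders in the prime sum reduces the problem to understanding the moments
$$
\Sigma_k(p,\nu) := \sum_{F\in\cusp_{2k}^*} \omega^F_{2k}\, a_F(p^\nu)
$$
for all $(p,\nu)$ with $p^\nu\leq k^{\alpha}$. By local equidistribution, $\Sigma_k(p,\nu)\to I_p(\nu) := \int(a^\nu+b^\nu+a^{-\nu}+b^{-\nu})\,d\mu_p(a,b)$ pointwise in $(p,\nu)$. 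Uniformity in $(p,\nu)$ is the main obstacle: the quantitative Petersson--Bessel formula of Propositions~\ref{proppetersson2} and~\ref{pr:petersson} delivers an error of rough shape $p^{C\nu}k^{-\delta}$ with effective constants $C,\delta>0$, and the threshold $\alpha<4/15$ is precisely what is needed so that the aggregated error over $p^\nu\leq k^{\alpha}$ stays $o(1)$. Relaxing $\alpha$ would require improving either the Kloosterman-type sum bounds entering the Siegel Petersson formula or the Bessel-period estimates.

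Finally, I read off the secondary main term from the values $I_p(\nu)$. The $(a,b)\leftrightarrow(a^{-1},b^{-1})$ symmetry of $\mu_p$ forces $I_p(1)=0$, so $\nu=1$ contributes nothing; for $\nu\geq 3$ the factor $p^{-\nu/2}\log p$ renders the total contribution $O(1/\log k)$. The whole secondary main term therefore comes from $\nu=2$: using the character theory of $\USp(4)$ together with the explicit shape of $\mu_p$, one finds $I_p(2)=1+O(1/p)$, the $O(1/p)$ correction being summable against $\log p/p$. Combining with the prime number theorem,
$$
-\frac{2}{\log k}\sum_p \hat\varphi\Bigl(\frac{2\log p}{\log k}\Bigr)\frac{\log p}{p}\,I_p(2)\ \longrightarrow\ -\frac{1}{2}\varphi(0),
$$
and adding the $\hat\varphi(0)$ from the deterministic piece reproduces $\int_{\R}\varphi\,d\sigma_{Sp}=\hat\varphi(0)-\tfrac12\varphi(0)$, as required. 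Conceptually, the appearance of the symplectic density reflects the symplectic self-duality of the spin representation of $\GSp(4,\C)$, in agreement with the Katz--Sarnak philosophy.
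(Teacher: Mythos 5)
Your overall architecture (explicit formula, swap the average inside, use quantitative Petersson to control primes $p^\nu\leq k^{\alpha}$, then read off $I_p(\nu)$) matches the paper, but the evaluation of the prime moments is wrong in two places, and the two errors happen to cancel so that you get the right final constant for an incorrect reason.

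First, the claim that ``the $(a,b)\leftrightarrow(a^{-1},b^{-1})$ symmetry of $\mu_p$ forces $I_p(1)=0$'' is a non-sequitur: the function $a+b+a^{-1}+b^{-1}$ is \emph{invariant} under that involution, so symmetry of $\mu_p$ gives no cancellation. In fact, by~\eqref{eq-u10} and Proposition~\ref{propplancherelmeasure}, one has $\int(a+b+a^{-1}+b^{-1})\,d\mu_p = p^{-1/2}\lambda_p$, which is nonzero on a density-$1/2$ set of primes (with value $2$). This $\nu=1$ contribution, summed against $\log p/p$ and a weak version of the prime number theorem, produces a term $-\varphi(0)$ in the main term, and it is precisely the piece that is responsible for the ``symplectic-looking'' answer despite the family not having a true symplectic symmetry type. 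Your derivation drops it entirely.

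Second, you assert $I_p(2)=1+O(1/p)$, but the correct value is $I_p(2)=-1+O(1/p)$: the leading term is the Frobenius--Schur indicator of the standard $4$-dimensional representation of $\USp(4)$, which is $-1$ since that representation is symplectic/quaternionic (you have quoted the orthogonal value $+1$ instead). With the correct sign, the $\nu=2$ sum contributes $+\varphi(0)/2$, not $-\varphi(0)/2$. The paper establishes this via the explicit decomposition~\eqref{eq-u20},~\eqref{eq-u01} and the quantitative orthogonality~\eqref{eq:ortho-quant}. Combined with the missing $-\varphi(0)$ from $\nu=1$, the corrected total is $-\varphi(0)+\tfrac12\varphi(0)=-\tfrac12\varphi(0)$, so the final constant coincides with your answer by accident.

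Finally, the closing remark that the symplectic density ``reflects the symplectic self-duality of the spin representation'' is exactly the conclusion the paper is at pains to disavow: the introduction argues that the honest symmetry type of this family (weighted by $1$ rather than by $\weight_{2k}^F$) should be orthogonal, and that the symplectic-looking density is an artifact of the arithmetic content of the weight $|a(F,1)|^2$, tied to B\"ocherer's conjecture. You should also keep the factor of $2$ straight in the normalization: the analytic conductor is $\asymp k^2$, so the explicit formula is applied with $\log(k^2)=2\log k$ in the scaling, not $\log k$; as written, your version of the identity for $D_\varphi(F)$ is off by a factor of $2$ in the prime sum.
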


This result raises interesting questions concerning the notion of
``family'' of cusp forms, especially from the point of view of the
notion of symmetry type that has arisen from the works of
Katz-Sarnak~\cite{ks}. Indeed, the limit measure above is the one that
arises from symplectic symmetry types, i.e., from the distribution of
eigenvalues close to $1$ of symplectic matrices of large size, when
renormalized to have averaged spacing equal to $1$. In general, it is
expected that some cusp forms will exhibit this symmetry when some
kind of infinite symplectic group occurs as ``monodromy group'' for
the family, in the way that generalizes the Chebotarev and Deligne
equidistribution theorems. 
\par
\emph{We do not believe} that this is the case here, and rather expect
that the limit measure in the theorem is due in part to the presence
of the weight $\omega_{2k}^{F}$ used in the averages
involved. Precisely, we expect that the correct symmetry type, without
weight, is \emph{orthogonal}, in the sense that for $\varphi$ with
support in $]-1,1[$, we should have\footnote{\ We do not try to
  predict whether odd or even orthogonal symmetry should occur; these
  could be distinguished most simply by computing the $2$-level
  density for test functions with restricted support as done
  in~\cite{miller} for classical modular forms (one can also attempt
  to study the low-lying zeros for test functions with support larger
  than $]-1,1[$, as done in~\cite{ils}.}
\begin{equation}\label{eq-orth-symm}
\frac{1}{|\cusp_{2k}^*|}
\sum_{F\in\cusp_{2k}^*}D_{\varphi}(F)
\longrightarrow \int_{\R}{\varphi(x)d\sigma_{O}(x)}
\end{equation}
where $d\sigma_O(x)=dx+\frac{\delta_0}{2}$ is the corresponding
measure for eigenvalues close to $1$ of orthogonal matrices.
\par
Intuitively, this should be related to the fact that the point $1/2$
is a critical special value -- in the sense of Deligne -- for the spin
$L$-functions of cusp forms $F\in\cusp_{k}^*$ (whereas $1/2$ is not
for real quadratic characters for example, which are the typical
example where symplectic symmetry is expected), similar to the special
role of the eigenvalue $1$ for orthogonal matrices, but not for
symplectic ones.
\par
Now the natural question is why should the weight $\omega_{2k}^F$ have
such an effect? (This is especially true because it may look, at
first, just like an analogue of the weight involving the Petersson
norm of classical modular forms which has been used very frequently
without exhibiting any such behavior, e.g., in the works of Iwaniec,
Luo and Sarnak~\cite{ils} and Duenez-Miller~\cite{duenez-miller}.)
\par
The point is that this weight $\weight_k^F$ itself \emph{contains
  arithmetic information related to central $L$-values of the Siegel
  cusp forms}. Indeed, we will see in Section~\ref{sec:lowlying} that
Theorem~\ref{th:low-lying} can be interpreted convincingly -- assuming
an orthogonal symmetry as in~(\ref{eq-orth-symm}) -- as evidence for a
beautiful conjecture of B\"ocherer (see~\cite{boch-conj}
or~\cite[Intr.]{furusawa-shalika-fund}) which suggests in particular a
relation of the type
\begin{equation}\label{furmart}
|a(F,1)|^2 \simeq L(F,1/2)L(F\times \chi_4,1/2)
\end{equation}
(where the $\simeq$ sign means equality up to non-zero factors
``unrelated to central critical values''; this version of the
conjecture is that proposed by Furusawa and Martin~\cite[\S 1,
(1.4)]{furusawa-martin}). We therefore consider that
Theorem~\ref{th:low-lying} provides suggestive global evidence towards
these specific variants of B\"ocherer's conjecture. Note that, at the
current time, this conjecture is not rigorously known for any cusp
form in $\cusp_{2k}^*$ which is not a Saito-Kurokawa lift.
\par
\begin{remark}
  One can easily present analogues of the phenomenon in
  Theorem~\ref{th:low-lying}, as we understand it, in the setting of
  random matrices. For instance, if $\mu_n$ denotes the probability
  Haar measure on $\SO_{2n}(\R)$, one may consider the measures
$$
d\nu_n(g)=c_n\det(1-g)d\mu_n(g),
$$
where $c_n>0$ is the constant that ensures that $\nu_n$ is a
probability measure.\footnote{\ In fact, although this seems
  irrelevant, a computation with the moments of characteristic
  polynomials of orthogonal matrices shows that $c_n=1/2$ for all
  $n$.} The distribution of the low-lying eigenvalues of $g\in
\SO_{2n}(\R)$, when computed using this measure, will clearly differ
from that arising from Haar measure (intuitively, by diminishing the
influence of matrices with an eigenvalue close to $1$, the factor
$\det(1-g)$ will produce a repulsion effect similar to what happens
for symplectic matrices.)
\end{remark}

Readers familiar with the case of $\GL(2)$-modular forms but not with
Siegel modular forms (or with their representation-theoretic
interpretation) may look at the Appendix where we discuss briefly the
analogies and significant differences between our results and some
more elementary $\GL(2)$-versions.
\par
For orientation, we add the following quick remarks: (1) the spin
$L$-function of $F\in\cusp_{2k}^*$ has analytic conductor
(see~\cite[p. 95]{ant} for the definition) of size $k^2$; (2) the
cardinality of $\cusp_{2k}^*$ (i.e., the dimension of $\cusp_{2k}$) is
of order of magnitude $k^3$ (see, e.g.,~\cite[Cor. p. 123]{klingen}
for the space $\mathcal{M}_{2k}$ of all Siegel modular forms of weight
$2k$, and~\cite[p. 69]{klingen} for the size of the ``correction
term'' $\mathcal{M}_{2k}/\cusp_{2k}$); (3) as already mentioned, the
spin $L$-function is self-dual with functional equation involving the
sign $+1$ for all $F\in \cusp_{2k}^*$.
\par
Apart from the treatment of low-lying zeros, we do not ``enter the
critical strip'' in this paper.  However, we hope to come back to the
problem of extending Theorem~\ref{th-average-l-functions} to averages
at points inside the critical strip, and we may already remark that,
if a statement like~\eqref{furmart} is valid, the weight already
involves some critical values of $L$-functions (in fact, of an
$L$-function of degree $8$. 

\subsection{Local equidistribution statement}
\label{sec:statement}

In order to state our main result on local spectral equidistribution of
Siegel modular forms, we begin with some preliminary notation
concerning cuspidal automorphic representations of $G(\A)=\GSp(4,\A)$.
\par
Let $\pi$ be a cuspidal automorphic representation of $G(\A)$, which
we assume to be unramified at all finite places and with trivial
central character. It is isomorphic to a restricted tensor product
$\pi = \otimes_v \pi_{v}$ where, for all places, $\pi_v$ is an
irreducible admissible unitary representation of the local group
$G(\Q_v)$. 
\par
By our assumption $\pi_{p}$ is unramified for all primes $p$ and so
the natural underlying space for local equidistribution at $p$ (when
considering families) is the set $X_p$ of unramified unitary
infinite-dimensional irreducible representations of $G(\Q_p)$ with
trivial central character. This set has a natural topology, hence a
natural $\sigma$-algebra. \par

We now proceed quite concretely to give
natural coordinates on $X_p$ from which the measurable structure is
obvious.
By~\cite{cart}, any $\pi_p\in X_p$ can be identified with the unique
unramified constituent of a representation $\chi_1 \times \chi_2
\rtimes \sigma$ induced from a character of the Borel subgroup which
is defined as follows using unramified (not necessarily unitary!)
characters $\chi_1$, $\chi_2$, $\sigma$ of $\Q_p^\times$:
$$
\begin{pmatrix}
a_1&\ast&\ast&\ast\\&a_2&\ast&\ast\\&&\lambda
  a_1^{-1}&\\&&\ast&\lambda a_2^{-1}
\end{pmatrix} \mapsto
\chi_1(a_1)\chi_2(a_2)\sigma(\lambda).
$$
\par
Having trivial central character means that
$$
\chi_1 \chi_2 \sigma^2 =1,
$$
and since the characters are unramified, it follows that $\pi_p$ is
characterized by the pair $(a,b)=(\sigma(p),\sigma(p)\chi_1(p))\in
\C^*\times\C^*$. The classification of local representations of
$G(\Q_p)$ (see for instance~\cite[Proposition 3.1]{pitschram}) implies
that the local parameters satisfy
\begin{equation}\label{eq:unitarity}
0<|a|,\ |b|\leq \sqrt{p}.
\end{equation}
\par
There are some identifications between the representations associated
to different $(a,b)$, coming from the Weyl $W$ group of order $8$
generated by the transformations
\begin{equation}\label{eq-weyl-group}
(a,b)\mapsto (b,a),\quad (a,b)\mapsto (a^{-1},b),\quad (a,b)\mapsto (a,b^{-1}).
\end{equation}
\par
We will denote by $Y_p$ the quotient of the set of $(a,b)$ satisfying
the upper-bounds~(\ref{eq:unitarity}), modulo the action of $W$. This
has the quotient topology and quotient $\sigma$-algebra, and we
identify $X_p$ with a subset of $Y_p$ using the parameters $(a,b)$
described above.  We will also denote by $X\subset X_p$ the subset of
tempered representations; under the identification of $X_p$ with a
subset of $Y_p$, the set $X$ corresponds precisely to
$|a|=|b|=1$. Note that this subset is indeed independent of $p$.
\par
In applications to $L$-functions, the local-global nature of
automorphic representations is reflected not only in the existence of
local components, but in their ``independence'' (or product structure)
when $p$ varies. To measure this below, we will also need to consider,
for any finite set of primes $\sop$, the maps
$$
\pi\mapsto (\pi_p)_{p\in \sop}
$$
which have image in the space
$$
X_{\sop}=\prod_{p\in \sop}{X_p}
$$
and can be identified with a subset of
$$
Y_{\sop}=\prod_{p\in \sop}{Y_p}.
$$
\par
Now we come back to Siegel modular forms. Let
$\cusp_k=\cusp_k(\Sp(4,\Z))$ be the space of Siegel cusp forms of
degree $2$, level $1$ and weight $k$. By ad\'elization (as described
in more detail in the next section), there is a cuspidal automorphic
representation $\pi_F$ canonically attached to $F$; the assumption
that the level is $1$ and there is no nebentypus means that $\pi_F$ is
unramified at finite places with trivial central character, as
above. Thus we have local components $\pi_p(F)\in X_p$ and
corresponding parameters $(a_p,b_p)\in Y_p$ for every prime $p$.

The generalized Ramanujan conjecture has been proved in this setting
by Weissauer~\cite{weissram}: it states that, if $F$ is not a
Saito-Kurokawa lift (these forms are defined in~\cite{eichzag} for
example; at the beginning of Section~\ref{ssec-averaging}, we recall
the description in terms of $L$-functions), we have $\pi_p(F)\in X$
for all $p$, i.e., $|a_p|=|b_p|=1$. On the other hand, if $F$ is a
Saito-Kurokawa lift, then $|a|=1$ and
$$
\{|b|,|b|^{-1}\}=\{p^{1/2},p^{-1/2}\}.
$$

\begin{remark}
  Partly because our paper is meant to explore the general philosophy
  of families of cusp forms, we will not hesitate to use this very
  deep result of Weissauer when this helps in simplifying our
  arguments. But it will be seen that the proof of the local
  equidistribution property itself does not invoke this result, and it
  seems quite likely that, with some additional work, it could be
  avoided in most, if not all, of the applications (in similar
  questions of local equidistribution for classical Maass cusp forms
  on $\GL(2)$, one can avoid the unproved Ramanujan-Petersson
  conjecture).
\end{remark}

Denote by $\cusp_k^*$ any fixed Hecke-basis of $\cusp_k$. Although
this is not known to be unique, the averages we are going to consider
turn out to be independent of this choice. In fact, all final results
could be phrased directly in terms of automorphic representations,
avoiding such a choice (at least seemingly).

We next proceed to define our way of weighting the cusp forms in
$\cusp_k^*$. This generalizes the $\weight_k^F$ in the statement of
the first theorem, and the reader may assume below that the parameters
introduced are $d=4$ and $\Lambda=1$.

Let $d>0$ be a positive integer such that $-d$ is a fundamental
discriminant of an imaginary quadratic field (i.e., one of the
following holds: (1) $d$ is congruent to $3 \pmod{4}$ and is
square-free; or (2) $d= 4m$ where $m$ is congruent to $1$ or $2
\pmod{4}$ and $m$ is square-free).  Let $\Cl_d$ denote the ideal class
group of this field, $h(-d)$ denote the class number, i.e., the
cardinality of $\Cl_d$, and $w(-d)$ denote the number of roots of
unity. Finally, fix a character $\Lambda$ of $\Cl_d$.

There is a well-known natural isomorphism, to be recalled more
precisely in Section~\ref{s:classical}, between $\Cl_d$ and the
$\SL(2,\Z)$-equivalence classes of primitive semi-integral two by two
positive definite matrix with determinant equal to $d/4$.  By abuse of
notation, we will also use $\Cl_d$ to denote the set of equivalence
classes of such matrices.

Define normalizing factors
$$
c_{k,d} =
\bigg(\frac{d}{4}\bigg)^{\frac{3}{2}-k}\frac{4c_k}{w(-d)h(-d)},
$$
where 
$$
c_k =  \frac{\sqrt{\pi}}{4}(4 \pi)^{3 -2k} 
\Gamma(k - \tfrac{3}{2})\Gamma(k-2).
$$
\par
We note that, using Dirichlet's class number formula, one can also
write
$$c_{k,d} = \bigg(\frac{d}{4}\bigg)^{1-k} \frac{4\pi
  c_k}{w(-d)^2 L(1, \chi_d)}
$$ 
where $\chi_d$ is the real primitive Dirichlet character associated to
the extension $\Q(\sqrt{-d})$.
Now, for each $F\in \cusp_k^*$, we have a Fourier
expansion
$$
F(Z)=\sum_{T>0}a(F,T)e(\Tr (TZ)),
$$
where $T$ runs over positive definite symmetric semi-integral matrices of
size $2$:
$$
T=\begin{pmatrix}a&b/2\\
b/2&c
\end{pmatrix}
$$
with $(a,b,c)\in \Z$. It follows easily from the modularity property
that $a(F,T)$ depends only on the equivalence class of $T$ modulo
$\SL(2,\Z)$. In fact, when $k$ is even, $a(F,T)$ depends only on the
equivalence class of $T$ modulo $\GL(2,\Z)$.
\par
We now let
$$
\weight^F_{k,d,\Lambda} = c_{k,d} \cdot d_\Lambda \cdot \frac{|a(d,
  \Lambda;F)|^2} { \langle F, F \rangle}
$$ 
where we put 
$$ 
d_\Lambda = 
\begin{cases} 
  1 & \text{ if } \Lambda^2 =1,\\
  2 & \text{ otherwise. }
\end{cases}
$$ 
and
\begin{equation}\label{eq:alf}
a(d, \Lambda;F)=\sum_{c\in \Cl_d}\overline{\Lambda(c)}a(F,c),
\end{equation}
a quantity which is well-defined in view of the invariance of Fourier
coefficients under $\SL(2,\Z)$. 
\par
\begin{remark}
  We will often consider $(d,\Lambda)$ to be fixed, and simplify the
  notation by writing $\weight_k^F$ only. Note that if $d=4$ and
  $\Lambda=1$, the weight $\weight_k^F$ is the same as the one defined
  in Theorem~\ref{th-average-l-functions}.
\end{remark}
\par
Next, we define the local spectral measures associated to the family
$\cusp_k^*$; we will show that they become equidistributed as
$k\rightarrow +\infty$ over even integers.  Let $\sop$ be a finite set
of primes. We have the components $\pi_{\sop}(F)=(\pi_p(F))_{p\in
  \sop}\in X_{\sop}$, and we define the measure $\nu_{\sop,k}$ on
$X_{\sop}$ by
$$
d\nu_{\sop,k} = d\nu_{\sop,k,d,\Lambda}=
\sum_{F \in \cusp_k^*} {\weight^F_{k,d,\Lambda} \delta_{\pi_{\sop}(F)}}
$$
where $\delta_{\bullet}$ is the Dirac mass at the given point. As we
will see, the normalization used has the effect that $\nu_{\sop,k}$ is
asymptotically a probability measure on $X_{\sop}$: we will show that
$$
\lim_{k\rightarrow +\infty} \nu_{\sop,2k}(X_{\sop})=1.
$$
\par
The local equidistribution problem --- which can clearly be phrased for
very general families of cusp forms --- is to determine if these
measures have limits as $k\rightarrow +\infty$, to identify  their
limit, to see in particular if the limit for a given $\sop$ is the
product of the limits for the subsets $\{p\}$, $p\in \sop$ (corresponding
to independence of the restrictions), and finally --- if possible --- to
express the resulting equidistribution in quantitative terms.
\par
To state our theorem, we now define the limiting measures. First of
all, we define a generalized Sato-Tate measure $\mu$ on each $X_p$ by
first taking the probability Haar measure on the space of conjugacy
classes of the compact unitary symplectic group $\USp(4)$, then
pushing this to a probability measure on $X$ by means of the map
$$
\begin{pmatrix}
e^{i \theta_1}&&&\\&e^{i \theta_2}&&\\&&e^{-i \theta_1}&\\&&&e^{-i \theta_2} 
\end{pmatrix}
\mapsto (e^{i\theta_1},e^{i\theta_2})\in X,
\quad\quad
(\theta_1,\theta_2)\in [0,\pi]^2, \quad \theta_1\le \theta_2,
$$
and finally extending it to $X_p$ by defining it equal to 0 outside $X$.
\par
In terms of the coordinates $(\theta_1, \theta_2)$ on $X$, the
resulting measure $\mu$ is explicitly given by
\begin{equation}\label{eq-haar-measure}
d\mu(\theta_1,\theta_2)=\frac{4}{\pi^2}(\cos\theta_1-\cos\theta_2)^2
\sin^2\theta_1\sin^2\theta_2 d\theta_1d\theta_2
\end{equation}
from the Weyl integration formula~\cite{fulhar}. We can also interpret
this measure as coming in the same way from conjugacy classes of the
unitary spin group $\USpin(5,\C)$, because of the ``exceptional
isomorphism'' $\Sp(4)\simeq \Spin(5)$.
\par
For each finite set of primes $\sop$, and $d, \Lambda$ as above, we
now define the measure $\mu_{\sop}=\mu_{\sop,d, \Lambda}$ on
$X_{\sop}$ by the formula
$$
d\mu_{\sop}=\bigotimes_{p\in \sop}d\mu_{p,d,\Lambda}
$$
where, for a single prime, we have
$$
d\mu_{p, d, \Lambda}=\Bigl(1-\Bigl(\frac{-d}{p}\Bigr)\frac{1}{p}
\Bigr)\Delta_p^{-1} d\mu
$$
(recall that $\mu$ is defined on $X_p$, but has support on $X$ only;
the same is therefore true of $\mu_{\sop}$) and the density functions
$\Delta_p=\Delta_{p,d,\Lambda}$ are given by
\begin{equation}\label{eq:density}
\Delta_p(\theta_1,\theta_2) =
\begin{cases}
  \big((1 + \frac{1}{p})^2 - \frac{4\cos^2\theta_1}{p}\big) \big((1 +
  \frac{1}{p})^2 - \frac{4\cos^2\theta_2}{p}\big) &
  \text{ if } p \text{ inert},\\
  \big((1 - \frac{1}{p})^2 + \frac{1}{p}(2\cos\theta_1 \sqrt{p} -
  \lambda_p) (\frac{2\cos\theta_1} {\sqrt{p}} - \lambda_p)\big)
  \\\quad\quad\times \big((1 - \frac{1}{p})^2 +
  \frac{1}{p}(2\cos\theta_2 \sqrt{p} - \lambda_p)
  (\frac{2\cos\theta_2} {\sqrt{p}} - \lambda_p)\big) & \text{ if } p
  \text{ split},
  \\
  \big(1 - \frac{2\lambda_p\cos\theta_1}{\sqrt{p}}+ \frac{1}{p}\big)
  \big(1 - \frac{2\lambda_p\cos\theta_2}{\sqrt{p}}+ \frac{1}{p}\big) &
  \text{ if } p \text{ ramified,}
\end{cases}
\end{equation}
where the behavior of primes refers to the field
$\Q(\sqrt{-d})$, and in the second and third cases, we put
$$
\lambda_p=\sum_{N(\p)=p}{\Lambda(\p)},
$$ 
the sum over the (one or two) prime ideals of norm $p$ in
$\Q(\sqrt{-d})$.
\par
Although we have written down this concrete, but unenlightening,
expression, there is a more intrinsic definition of these measures
$\mu_p = \mu_{p, d, \Lambda}$ and this will in fact be the way they
will naturally occur (and the way we will prove the results): they are
precisely what Furusawa and Shalika~\cite{furshal} call the
\emph{Plancherel measure for the local Bessel model associated to the
  data $(d,\Lambda)$}. In particular, this description shows that they
are probability measures, which is not quite obvious from the
definition. On the other hand, the following property, which is of
great relevance to global applications, is immediate: as $p\rightarrow
+\infty$, the measures $\mu_p$ converge weakly to the measure $\mu$,
which has a group-theoretic interpretation.
\par
Our local equidistribution result can now be stated:

\begin{theorem}[Local equidistribution and
  independence]\label{maintheorem}
  Fix any $d, \Lambda$ as above. For any finite set of primes $\sop$,
  the measures $\nu_{\sop,k}$ on $X_{\sop}$ converge weak-$*$ to
  $\mu_{\sop}$ as $k\rightarrow +\infty$ over even integers, i.e., for
  any continuous function $\varphi$ on $Y_{\sop}$, we have
$$
\lim_{k\ra +\infty} \sum_{F\in \cusp_{2k}^*}{\weight_{2k}^F
  \varphi((a_p(F),b_p(F))_{p\in \sop})}=
\int_{Y_{\sop}}{\varphi(x)d\mu_{\sop}(x)}.
$$
\par
In particular, if
$$
\varphi((y_p)_{p\in \sop})=\prod_{p\in \sop}{\varphi_p(y_p)}
$$
is a product function, we have
$$
\lim_{k\ra +\infty}
\sum_{F\in \cusp_{2k}^*}{\weight_{2k}^F \varphi((a_p(F),b_p(F))_{p\in \sop})}=
\prod_{p\in \sop}{\int_{Y_p}{\varphi_p(x)d\mu_p(x)}}.
$$
\par
Moreover, assume $\varphi$ is of product form, and that $\varphi_p$ is
a Laurent polynomial in $(a,b,a^{-1},b^{-1})$, invariant under the
action of the group $W$ given by~(\ref{eq-weyl-group}), and of total
degree $d_p$ as a polynomial in $(a+a^{-1}, b+b^{-1})$,  then we have
\begin{equation}\label{eq:quant-local}
\sum_{F\in \cusp_{2k}^*}{\weight_{2k}^F 
\varphi((a_p(F),b_p(F))_{p\in \sop})}=
\int_{Y_{\sop}}{\varphi(x)d\mu_{\sop}(x)}
+O\Bigl(
k^{-2/3}L^{1+\eps}\|\varphi\|_{\infty}
\Bigr)
\end{equation}
for any $\eps>0$, where
$$
L=\prod_{p\in \sop}{p^{d_p}},
$$
and $\|\varphi\|_{\infty}$ is the maximum of $|\varphi|$ on
$X^{|\sop|}\subset Y_{\sop}$. The implied constant depends only on $d$ and
$\eps$.
\end{theorem}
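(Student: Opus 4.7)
The plan is to deduce Theorem \ref{maintheorem} from a relative trace formula/Petersson-type identity (Propositions \ref{proppetersson2} and \ref{pr:petersson}), combined with the Satake isomorphism to translate questions about the Satake parameters $(a_p(F),b_p(F))$ into questions about Hecke eigenvalues. The weak-$*$ convergence and the product form follow from the quantitative statement by a standard approximation argument, so the heart of the matter is the bound \eqref{eq:quant-local}.

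First I would reduce to test functions of a special form. Since $\varphi_p$ is a $W$-invariant Laurent polynomial of bounded total degree, expand it in the basis of characters $\chi_\rho$ of irreducible polynomial representations $\rho$ of $\USp(4)$ (equivalently, of $\Sp(4,\C)$); only finitely many $\rho$, of highest weight bounded linearly in $d_p$, contribute, and the coefficients are controlled by $\|\varphi_p\|_\infty$. Via the Satake isomorphism, the product $\prod_{p\in\sop}\chi_{\rho_p}(a_p(F),b_p(F))$ equals the eigenvalue of an explicit element $T_{\uple{\rho}}$ of the (unramified) spherical Hecke algebra of level $\prod_{p\in\sop}p^{d_p}\asymp L$ acting on $F$. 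Thus the left-hand side of \eqref{eq:quant-local} is a weighted sum of Hecke eigenvalues
\[
\sum_{F\in\cusp_{2k}^*}\weight_{2k}^F\lambda_{T_{\uple{\rho}}}(F)|a(d,\Lambda;F)|^2/|a(d,\Lambda;F)|^2,
\]
which up to normalization is exactly the quantity to which the Petersson formula with Bessel coefficients applies.

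Next I would apply the Petersson-type formula of Propositions \ref{proppetersson2}--\ref{pr:petersson}, which expresses the above twisted moment as a diagonal term (coming from matching Bessel periods of the same form with itself) plus an off-diagonal sum indexed by double cosets, with each term involving a generalized Kloosterman sum and an archimedean Bessel transform of the test function, evaluated at parameters of size governed by $k$ and $L$. The diagonal main term, after unfolding Bessel periods, is by construction the integral of $\prod_p\chi_{\rho_p}$ against the product of local Plancherel measures for the Bessel model $(d,\Lambda)$, which by the Furusawa--Shalika identification is precisely $\int\varphi\,d\mu_{\sop}$. The bulk of the work is therefore to show that the remaining off-diagonal contribution is $O(k^{-2/3}L^{1+\eps}\|\varphi\|_\infty)$: one estimates the Kloosterman sums trivially or by Weil-type bounds with polynomial dependence in $L$, and invokes the decay of the relevant archimedean Bessel kernel, which is a $J$-Bessel function of large order $k$. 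Standard uniform asymptotics for $J_{k-1}(x)$ in the transitional range $x\asymp k$ give the decay $k^{-2/3}$, and this is the source of the power of $k$ in the theorem.

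The main obstacle I expect is precisely this archimedean analysis uniform in the Hecke parameter. One must carefully track the interaction between the size $L$ of the Hecke operator (which affects the location of the relevant argument of the Bessel kernel) and the weight $k$, and argue that over the relevant range the $k^{-2/3}$ bound holds with at worst a multiplicative $L^{1+\eps}$ loss coming from divisor-sum estimates on the non-archimedean side and from the number of double cosets of bounded size; a non-trivial point is showing that no anomalously large contribution arises from ``transition'' terms close to $x=k$. Once these bounds are in place, summing over the finitely many $\rho$ appearing in the expansion of $\varphi$ gives \eqref{eq:quant-local}, and the unconditional weak-$*$ statement and product structure follow by approximating continuous $W$-invariant functions on the compact set $X^{|\sop|}$ by such Laurent polynomials via Stone--Weierstrass, together with the fact that $\mu_{\sop}$ and the limits of $\nu_{\sop,2k}$ are supported on $X^{|\sop|}$ (using Weissauer's theorem to restrict to non-Saito--Kurokawa contributions, or by direct majorization of the Saito--Kurokawa part).
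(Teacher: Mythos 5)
Your outline has the right flavor --- reduce to a nice algebraic basis, apply a Petersson-type orthogonality relation, and recover the continuous statement by Stone--Weierstrass --- but you pick the wrong basis, and this obscures the genuine structural difficulty the paper has to solve.

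You propose to expand $\varphi_p$ in characters $\chi_\rho$ of $\USp(4)$ and then interpret $\prod_p\chi_{\rho_p}(a_p(F),b_p(F))$ as a Hecke eigenvalue via the Satake isomorphism, and you claim that the resulting weighted average of Hecke eigenvalues ``is exactly the quantity to which the Petersson formula with Bessel coefficients applies.'' That step glosses over the central obstruction in this setting: for genus-$2$ Siegel cusp forms there is \emph{no} direct identification of Hecke eigenvalues with Fourier coefficients (the paper emphasizes this explicitly just after Theorem~\ref{t:relation}), so the Petersson machinery --- which only controls averages of products of Fourier coefficients, $\langle P_{k,Q},P_{k,T}\rangle$ type quantities --- does not immediately see $\lambda_{T_{\rho}}(F)$. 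The whole point of the Bessel-model apparatus (Sugano's formula, Theorem~\ref{t:sugano}, plus the local--global unfolding in Proposition~\ref{classicalbesselprop}) is to produce the ``key relation'' Theorem~\ref{t:relation}, which relates a Fourier coefficient $a(F,S_{c'}^{L,M})$ to the reference coefficient $a(F,S_c)$ times $\prod_p U_p^{l_p,m_p}(a_p(F),b_p(F))$. That relation is phrased in terms of the functions $U_p^{l,m}$ (specializations of Macdonald polynomials attached to the Bessel datum $(d,\Lambda)$), \emph{not} in terms of $\chi_\rho$. These $U_p^{l,m}$ are exactly the orthonormal system for the Plancherel measure $\mu_p$ (Proposition~\ref{propplancherelmeasure}), which is why the diagonal term matches $\int\varphi\,d\mu_{\sop}$; the characters $\chi_\rho$ are not orthonormal for $\mu_p$. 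To carry out your plan you would have to re-expand $\chi_\rho$ in the $U_p^{l,m}$ basis anyway, so the character step is a detour that hides the essential ingredient. (The stray expression $|a(d,\Lambda;F)|^2/|a(d,\Lambda;F)|^2$ in your displayed formula also signals this confusion: the weight already contains $|a(d,\Lambda;F)|^2/\langle F,F\rangle$, and Theorem~\ref{t:relation} converts $|a(d,\Lambda;F)|^2\prod U_p^{l,m}$ into a product of two Fourier-coefficient sums, which is then handled by~(\ref{eq:poincare-basic}) and Kitaoka's expansion.)

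A second, lesser point: you fold in a re-derivation of the off-diagonal/Bessel-kernel estimate, but for the proof of Theorem~\ref{maintheorem} that is already encapsulated in Propositions~\ref{proppetersson2} and~\ref{pr:petersson} and can be used as a black box; the theorem's own proof consists of (i) Corollary~\ref{cor:spanning} via Stone--Weierstrass applied to the $U_p^{l,m}$, (ii) Proposition~\ref{propplancherelmeasure} to evaluate the limit integral, (iii) Proposition~\ref{pr:petersson} for the matching spectral average, and, for the quantitative form, (iv) the expansion $\varphi_p=\sum\hat\varphi_p(l,m)U_p^{l,m}$ with coefficients bounded via the $L^2(\mu_p)$-orthonormality and Lemma~\ref{lm:bounds-ulm}, summed with a divisor-type bound. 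If you replace your $\chi_\rho$ basis with the $U_p^{l,m}$ basis and invoke Theorem~\ref{t:relation} as the bridge from spectral data to Fourier coefficients, your outline lines up with the paper's argument.
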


\begin{remark}
  It is possible to extend our results to odd $k$. However, this
  requires a slightly different definition of the weights
  $\weight_k^F$. For simplicity, we only consider $k$ even in this
  paper.
\end{remark}

\begin{remark}
  In a recent preprint, Sug Woo Shin~\cite{shin-plancherel} has proved
  a related result. In Shin's work, the weights $\weight_k^F$ are not
  present; instead the cusp forms are counted with the natural weight
  $1$. Using the trace formula, he proves a qualitative result that
  for suitable families of cusp forms on connected reductive groups
  over totally real fields, there is local equidistribution at a given
  place; when the level grows the sum of the point measures associated
  to the forms of fixed level converges towards the \emph{Plancherel}
  measure on the unitary dual of the local group.
\par
In fact, from the viewpoint of automorphic representations on
reductive groups, our result is essentially a (quantitative)
\emph{relative trace formula} analogue of what Shin (and others before
him, such as DeGeorge-Wallach~\cite{degeorge-wallach},
Clozel~\cite{clozel-limit}, Savin~\cite{savin-limit},
Serre~\cite{serre}, Sauvageot~\cite{sauvageot}) did using the trace
formula.
\par
We expect that the methods of this paper would suffice to prove the
equidistribution result for Siegel modular forms of level $N$ coprime
to the set of places $\sop$ as $N + k \rightarrow \infty$. It would
also be interesting to see if our results can be generalized to the
case of automorphic forms on the split special orthogonal groups,
using the formulas for the Bessel model there from~\cite{bff} (at
least qualitatively). We hope to treat these questions elsewhere.
\end{remark}

We briefly explain the structure of this paper. In
Chapter~\ref{bessel}, we introduce the Bessel model, explain its
relation to the Fourier coefficients and derive a result relating the
Fourier coefficients to Satake parameters. In Chapter~\ref{poincare}
we recall the definition of Poincar\'e series in this context and
derive a Petersson-type quantitative orthogonality formula for the
Siegel cusp forms (this involves non-trivial adaptations of the method
of Kitaoka~\cite{Kit}). In Chapter~\ref{equidistribution}, we put the
above results together to deduce our main theorem
(Theorem~\ref{maintheorem}) on local equidistribution. Finally, in
Chapter~\ref{applications}, we prove
Theorems~\ref{th-average-l-functions} and~\ref{th:low-lying} as well
as provide several other applications of the results of the previous
chapters.
\par
\medskip 
\par
\textbf{Acknowledgements.}  We would like to thank Masaaki Furusawa
for forwarding us a copy of the relevant part of his ongoing
work~\cite{furmarsha}, and for some helpful suggestions. Thanks also
to a referee for prompting us to give a more thorough explanation of
the behavior of Theorem~\ref{th:low-lying}.

\subsection{Notation}

We introduce here some notation used in the paper.

\begin{itemize}
\item The symbols $\Z$, $\Z_{\ge 0}$, $\Q$, $\R$, $\C$, $\Z_p$ and
  $\Q_p$ have the usual meanings. $\A$ denotes the ring of ad\`eles of
  $\Q$. For a complex number $z$, $e(z)$ denotes $e^{2\pi i z}$.
\item For any commutative ring $R$ and positive integer $n$, $M(n,R)$
  denotes the ring of $n$ by $n$ matrices with entries in $R$ and
  $\GL(n,R)$ denotes the group of invertible matrices in $M(n,R)$.  If
  $A\in M(n,R)$, we let $\T{A}$ denote its transpose.  We use
  $R^{\times}$ to denote $\GL(1,R)$.
\item For matrices $A$ and $B$, we use $A[B]$ to denote $^tBA B$,
  whenever the matrices are of compatible sizes. 
\item We say that a symmetric matrix in $M(n,\Z)$ is semi-integral if
  it has integral diagonal entries and half-integral off-diagonal
  ones.
\item Denote by $J_n$ the $2n$ by $2n$ matrix given by
$$
J_n =
\begin{pmatrix}
0 & I_n\\
-I_n & 0\\
\end{pmatrix}.
$$
We use $J$ to denote $J_2$.
\item For a positive integer $n$, define the algebraic group
  $\GSp(2n)$ over $\Z$ by
$$
\GSp(2n,R) = \{g \in \GL(2n,R) | \T{g}J_ng =
  \mu_n(g)J_n, \mu_n(g) \in R^\times\}
$$ 
for any commutative ring $R$.

Define $\Sp(2n)$ to be the subgroup of $\GSp(2n)$ consisting of
elements $g_1\in \GSp(2n)$ with $\mu_n(g_1)=1$.

The letter $G$ will always stand for $\GSp(4)$. The letter $\Gamma$
will always stand for the group $\Sp(4,\Z)$.

\item The Siegel upper-half space is defined by
$$
\H_n = \{ Z \in M_n(\C) | Z =\T{Z},\ \Imag(Z)
  \text{ is positive definite}\}.
$$ 
For 
$$
g=\begin{pmatrix} A&B\\ C&D \end{pmatrix} \in G(\R),
$$
and $Z\in \H_2$, we denote 
$$
J(g,Z) = CZ + D.
$$

\item For a prime $p$, the maximal compact subgroup $K_p$ of $G(\Q_p)$
  is defined by 
$$
K_p = G(\Q_p) \cap \GL(4,\Z_p).
$$

\item For a quadratic extension $\field$ of $\Q$ and $p$ a prime,
  define $\field_p =\field\otimes_\Q \Q_p$; we let $\Z_{\field}$
  denote the ring of integers of $\field$ and $\Z_{\field,p}$ its
  $p$-closure in $\field_p$.
\end{itemize}

\section{Bessel models}\label{bessel}

\subsection{Global Bessel models}

We recall the definition of the Bessel model of Novodvorsky and
Piatetski-Shapiro \cite{nov} following the exposition of Furusawa
\cite{fur}.
\par
Let $S \in M_2(\Q)$ be a symmetric matrix.\footnote{\ The notation
  conflicts a bit with the sets of primes $\sop$, but we hope that the
  boldface font of the latter and the context will prevent any
  confusion.} Define $\disc(S)=-4 \det(S)$ and put $d=-\disc(S)$. For
$$
S =\begin{pmatrix}
  a & b/2\\
  b/2 & c\\
\end{pmatrix},
$$ 
we define the element 
$$
\xi = \xi_S = \begin{pmatrix}
b/2 & c\\
-a & -b/2\\
\end{pmatrix}.
$$

Let $\field$ denote the subfield $\Q(\sqrt{-d})$ of $\C$. We always
identify $\Q(\xi)$ with $\field$ via
\begin{equation}\label{e:L} 
\Q(\xi)\ni x + y\xi \mapsto x +
y\frac{\sqrt{-d}}{2} \in \field, \ x,y\in \Q. 
\end{equation}

We define a $\Q$-algebraic subgroup $T =T_S$ of $\GL(2)$ by
\begin{equation}
T = \{g \in \GL(2) | \T{g}Sg =\det(g)
S\},
\end{equation} 
so that it is not hard to verify that $T(\Q) = \Q(\xi)^\times$. We
identify $T(\Q)$ with $\field^\times$ via~\eqref{e:L}.

We can also consider $T$ as a subgroup of $G$ via
\begin{equation}\label{gl2embed}
T \ni g \mapsto
\begin{pmatrix}
g & 0\\
0 & \det(g)\ \T{g^{-1}}
\end{pmatrix} \in G.
\end{equation}

Let us further denote by $U$ the subgroup of $G$ defined by
$$
U = \{u(X) =
\begin{pmatrix}
1_2 & X\\
0 & 1_2\\
\end{pmatrix} |\T{X} = X\},
$$
and by $R$ be subgroup $R=TU$ of $G$.

Let $\psi = \prod_v\psi_v$ be a character of $\A$ such that
\begin{itemize}
\item The conductor of $\psi_p$ is $\Z_p$ for all (finite) primes $p$,
\item $\psi_\infty(x) = e(x),$ for $x \in \R$,
\item $\psi|_\Q =1.$
\end{itemize} We define the
character $\theta = \theta_S$ on $U(\A)$ by
$$
\theta(u(X))=
\psi(\Tr(S(X))).
$$
\par
Let $\Lambda$ be a character of $T(\A) / T(\Q)$ such that $\Lambda |
\A^\times= 1$. Via~\eqref{e:L} we can think of $\Lambda$ as a
character of $\field^\times(\A)/\field^\times$ such that $\Lambda |
\A^\times = 1$. Denote by $\Lambda \otimes \theta$ the character of
$R(\A)$ defined by $(\Lambda \otimes \theta)(tu) =
\Lambda(t)\theta(u)$ for $t\in T(\A)$ and $u\in U(\A).$

Let $\pi$ be an automorphic cuspidal representation of $G(\A)$ with
trivial central character and $V_\pi$ be its space of automorphic
forms. For $\Phi \in V_\pi$, we define a function $B_\Phi$ on $G(\A)$
by
\begin{equation}\label{defbessel}
  B_\Phi(h) =
  \int_{R(\Q)Z_G(\A)\bs R(\A)} \overline{(\Lambda \otimes \theta)(r)}
\Phi(rh)dr.
\end{equation}
\par
The $\C$-vector space of functions on $G(\A)$ spanned by $\{B_\Phi |
\Phi \in V_\pi \}$ is called the \emph{global Bessel space} of type
$(S, \Lambda, \psi)$ for $\pi$; it is invariant under the regular
action of $G(\A)$ and when the space is non-zero, the corresponding
representation is a model of $\pi$. Thus one says that $\pi$ has a
global Bessel model of type $(S, \Lambda, \psi)$ if this global Bessel
space is non-zero, i.e., if there exists $\Phi \in V_\pi$ such that
$B_\Phi \neq 0$.

\subsection{The classical interpretation}\label{s:classical} 

Let us now suppose that $\Phi$, $\pi$ come from a classical Siegel
cusp form $F$. More precisely, for a positive integer $N$,
define 
$$
\Gamma^\ast(N) := \{g \in \Gamma: g
\equiv \begin{pmatrix}\ast&0&0&0\\ 0&\ast&0&0\\0&0&\ast&0\\
  0&0&0&\ast \end{pmatrix} \pmod{N} \}.
$$ 
\par
We say that $F \in \cusp_k(\Gamma^\ast(N))$ if it is a holomorphic
function on $\H_2$ which satisfies
$$
F(\gamma Z) = \det(J(\gamma,Z))^k F(Z)
$$ 
for $\gamma \in \Gamma^\ast(N)$, $Z \in \H_2$, and vanishes at the
cusps. It is well-known that $F$ has a Fourier expansion $$F(Z)
=\sum_{T > 0, T\in \L} a(F, T) e(\Tr(TZ)),$$ where $e(z) = \exp(2\pi
iz)$ and $T$ runs through all symmetric positive-definite matrices of
size two in a suitable lattice $\L$ (depending on $N$). If $N=1$, then
$\L$ is just the set of symmetric, semi-integral matrices.  Also,
recall that $\cusp_k(\Gamma^\ast(1))$ is denoted simply by $\cusp_k$.

We define the ad\'elization $\Phi_F$ of $F$ to be the function on
$G(\A)$ defined by
\begin{equation}\label{adelization}
\Phi_F(\gamma h_\infty k_0) =
  \mu_2(h_\infty)^k\det(J(h_\infty,
  iI_2))^{-k}F(h_\infty(i))
\end{equation} 
where $\gamma \in G(\Q), h_\infty \in G(\R)^+$ and 
$$
k_0 \in \prod_{ p \nmid N } K_p \prod_{p |N} K_p^N 
$$
where 
$$
K_p^N = \{g \in G(\Z_p):g
\equiv \begin{pmatrix}\ast&0&0&0\\ 0&\ast&0&0\\0&0&\ast&0\\
  0&0&0&\ast \end{pmatrix} \pmod{N} \}
$$ 
is the local analogue of $\Gamma^\ast(N)$. Then $\Phi_F$ is a
well-defined function on the whole of $G(\A)$ by strong approximation,
and is an automorphic form.

We now assume $N=1$. Let $d$ be a positive integer such that $-d$ is
the discriminant of the imaginary quadratic field
$\Q(\sqrt{-d})$. Define
\begin{equation}\label{defsd}
 S = S(-d) = \begin{cases} \begin{pmatrix}
  \frac{d}{4} & 0\\
 0 & 1\\\end{pmatrix} & \text{ if } d\equiv 0\pmod{4}, \\[4ex]
 \begin{pmatrix} \frac{1+d}{4} & \frac12\\\frac12 & 1\\
 \end{pmatrix} & \text{ if } d\equiv 3\pmod{4}.\end{cases}
\end{equation}
\par
Define the groups $R, T, U$ as in the previous section.

Put $\field=\Q(\sqrt{-d})$. Recall that $\Cl_d$ denotes the ideal
class group of $\field$. Let $(t_c)$, $c\in \Cl_d$, be coset
representatives such that
\begin{equation}\label{e:tcosetca1}
T(\A) = \coprod_{c}t_cT(\Q)T(\R)\Pi_{p<\infty}
(T(\Q_p) \cap \GL_2(\Z_p)),
\end{equation}
with $t_c \in \prod_{p<\infty} T(\Q_p)$. We can write
$$
t_c = \gamma_{c}m_{c}\kappa_{c}
$$
with $\gamma_{c} \in \GL(2,\Q)$, $m_{c} \in \GL^+(2,\R)$, and
$\kappa_{c}\in \Pi_{p<\infty} \GL(2,\Z_p).$

The matrices 
$$
S_{c} = \det(\gamma_{c})^{-1}\ (\T{\gamma_{c}})S\gamma_{c}
$$
have discriminant $-d$, and form a set of representatives of the
$\SL(2,\Z)$-equivalence classes of primitive semi-integral positive
definite matrices of discriminant $-d$.

Choose $\Lambda$ a character of $T(\A)/T(\Q)T(\R)((\Pi_{p<\infty}
T(\Z_p))$, which we identify with an ideal class character of
$\Q(\sqrt{-d})$.

Next, for any positive integer $N$, define (this is a certain ray
class group)
$$
\Cl_d(N) = T(\A) /T(\Q)T(\R)\Pi_{p<\infty} (T(\Q_p) \cap
K_p^{(0)}(N)),
$$ 
where $K_p^{(0)}(N)$ is the subgroup of $\GL_2(\Z_p)$
consisting of elements that are congruent to a matrix of the form
$$
\begin{pmatrix}*&0\\ *&*\end{pmatrix} \pmod{N}.
$$

As before, we can take coset representatives $(t_{c'})$, $c'\in
\Cl_d(N)$, such that
\begin{equation}\label{e:tcosetca12}
T(\A) = \coprod_{c'}t_{c'}T(\Q)T(\R)\Pi_{p<\infty}
(T(\Q_p) \cap K_p^{(0)}(N)),
\end{equation}
with $t_{c'} \in \prod_{p<\infty} T(\Q_p),$ and write
$$
t_{c'} = \gamma_{c'}m_{c'}\kappa_{c'}
$$
with $\gamma_{c'} \in \GL(2,\Q)$, $m_{c'} \in \GL^+(2,\R)$, and
$\kappa_{c'}\in \Pi_{p<\infty} K_p^{(0)}(N).$
\par
Define the matrices
$$
S_{c'} = \det(\gamma_{c'})^{-1}\ (\T{\gamma_{c'}})S\gamma_{c'}.
$$
\par
For each pair of positive integers $L,M$, define the element $H(L,M)
\in G(\A)$ by 
$$
H(L,M)_\infty = 1,\quad\quad
H(L,M)_p
= \begin{pmatrix}
LM^2&&&\\
&LM&&\\
&&1&\\&&&
M
\end{pmatrix}
$$ 
for each prime $p < \infty$. Note that $H(1,1) = 1$.

For any symmetric matrix $T$, we let $T^{L,M}$ denote the matrix
\begin{equation}\label{eq-sclm}
T^{L,M}
=\begin{pmatrix}
L&\\&L\end{pmatrix}
\begin{pmatrix}M&\\&1\end{pmatrix}
T
\begin{pmatrix}M&\\&1\end{pmatrix}.
\end{equation}
\par
Define the quantity $B(L, M; \Phi_F)$
by
\begin{equation}\label{blmdef}
  B(L, M; \Phi_F) = B_{\Phi}(H(L,M))=
  \int_{R(\Q)Z_G(\A)\bs R(\A)} \overline{(\Lambda \otimes\theta)(r)}
  \Phi_F(rH(L,M))dr.
\end{equation}

The next proposition proves an important relation which expresses the
quantity $B(L, M; \Phi_F)$ in terms of Fourier coefficients. The proof
is fairly routine, but to our best knowledge it has not appeared in
print before.

\begin{proposition}\label{classicalbesselprop} 
  Let $F \in \cusp_k$ have the Fourier expansion
$$
F(Z)=\sum_{T > 0} a(F, T) 
e(\Tr(TZ)).
$$ 
\par
Then we have
$$
B(L, M; \Phi_F) = r \cdot e^{-2 \pi \Tr (S)} (LM)^{-k} \frac{1}{|
  \Cl_d(M) |}\sum_{c\in \Cl_d(M)} \overline{\Lambda(c)}a(F,S_c^{L,M}).
$$ 
where $r$ is a non-zero constant depending only on the normalization
used for the Haar measure on $R$.
\end{proposition}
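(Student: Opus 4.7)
The plan is to unfold the Bessel integral using the semidirect product structure $R = T \ltimes U$, break the outer $T$-integral along the coset representatives indexed by $\Cl_d(M)$, and reduce the inner $U$-integral to a Fourier orthogonality computation that selects the single coefficient $a(F, S_{c'}^{L,M})$. Since $T \cap U = 1$ and $Z_G \subset T$, I first rewrite
\begin{equation*}
B(L,M;\Phi_F) = \int_{T(\Q)Z_G(\A)\backslash T(\A)} \overline{\Lambda(t)} \left[ \int_{U(\Q)\backslash U(\A)} \overline{\theta(u)} \Phi_F(tuH(L,M))\, du \right] dt,
\end{equation*}
noting that the inner bracket is left $T(\Q)$- and $Z_G(\A)$-invariant thanks to the modular properties of $\Lambda$ and the trivial central character of $\Phi_F$.

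For the outer integral I use the coset decomposition~\eqref{e:tcosetca12} together with the observations that $\Lambda$ is trivial on $T(\R) \prod_{p<\infty}(T(\Q_p) \cap K_p^{(0)}(M))$ and that $\Phi_F$ is right-invariant under $\prod_p K_p$ (the level is $1$), so the inner bracket is actually constant on each double coset. This collapses the outer integral to $\frac{r}{|\Cl_d(M)|} \sum_{c' \in \Cl_d(M)} \overline{\Lambda(c')}\, I(c')$, where $I(c')$ is the inner integral evaluated at $t = t_{c'}$ and the constant $r$ absorbs the volume factors from the chosen Haar measure on $R$ (together with the common volume of each coset).

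For each fixed $c'$, strong approximation gives $U(\A) = U(\Q)\cdot U(\R) \prod_p U(\Z_p)$, so the inner integral folds to $\int_{U(\Z)\backslash U(\R)} \overline{\theta_\infty(u_\infty)} \Phi_F(t_{c'} u_\infty H(L,M))\, du_\infty$. Writing $t_{c'} = \gamma_{c'} m_{c'} \kappa_{c'}$ and using $(t_{c'})_\infty = 1$ forces $m_{c'} = \gamma_{c'}^{-1}$ at the archimedean place; combining this with a strong-approximation decomposition that produces a $\gamma_H \in G(\Q)$ absorbing $H(L,M)$ into $G(\R)^+ \cdot \prod_p K_p$, the left-$G(\Q)$-invariance of $\Phi_F$ lets me evaluate the integrand via the adelization formula~\eqref{adelization} as a classical expression in $F$, picking up the factor $(LM)^{-k}$ from the automorphy term $\det(J(h_\infty, iI_2))^{-k}$.

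Finally I substitute the Fourier expansion $F(Z) = \sum_T a(F,T) e(\Tr(TZ))$ and use the conjugation identity $g u(X) g^{-1} = u(\det(A)^{-1} A X\,\T{A})$ for $g = \operatorname{diag}(A, \det(A)\,\T{A}^{-1}) \in T$, together with the analogous transformation induced by $H(L,M)$. These show that pulling the character $\theta_S$ back through the $T$- and $H(L,M)$-actions turns it into $\theta_{S_{c'}^{L,M}}$, so orthogonality of characters on the compact group $U(\Z)\backslash U(\R)$ selects the single Fourier mode $T = S_{c'}^{L,M}$. An explicit computation at the archimedean place, combining $m_{c'} = \gamma_{c'}^{-1}$, the diagonal action of $H(L,M)$, and the cyclic property of the trace, collapses the resulting exponential to $e^{-2\pi \Tr(S)}$ (independently of $c'$, $L$, and $M$), yielding the claimed identity. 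The main obstacle is the detailed bookkeeping: verifying that the combined conjugation by $\gamma_{c'}$ (producing $S_{c'} = \det(\gamma_{c'})^{-1}\,\T{\gamma_{c'}} S \gamma_{c'}$) and by $H(L,M)$ (producing the ``${}^{L,M}$'' transformation~\eqref{eq-sclm}) gives precisely $S_{c'}^{L,M}$, and that all the normalizing constants $e^{-2\pi\Tr(S)}$, $(LM)^{-k}$, and $|\Cl_d(M)|^{-1}$ assemble consistently into a single global Haar-measure constant $r$.
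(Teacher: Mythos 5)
Your proposal is correct in outcome and uses the same underlying mechanism, but it organizes the computation differently from the paper. The paper first reduces to the case $L=M=1$ by the substitution $B(L,M;\Phi_F)=B(1,1;\Phi_F^{L,M})$, identifies $\Phi_F^{L,M}$ with the adelization of the rescaled cusp form $F'(Z)=(LM)^{-k}F(H_\infty^{-1}Z)\in\cusp_k(\Gamma^*(LM^2))$, cites the unfolding computation of \cite[Prop.~2.8.5]{saha-thesis} for $B(1,1;\Phi_{F'})$, and then converts $a(F',S_c)$ back to $a(F,S_c^{L,M})$. You instead unfold $B(L,M;\Phi_F)$ directly, carrying the finite ad\`ele $H(L,M)$ through the whole argument. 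Both routes are valid; the paper's buys modularity (all the delicate unfolding is concentrated in the cited reference and the only new work is a change of form), while yours is self-contained but requires tracking $H(L,M)$ through the coset decomposition, the $U$-orthogonality, and the archimedean evaluation.

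One point in your sketch deserves to be made fully explicit, because it is precisely where a subtle error appeared in an earlier version of this paper (see the remark following the proposition). You justify the constancy of the inner bracket on the cells indexed by $\Cl_d(M)$ by the right-$\prod_p K_p$-invariance of $\Phi_F$. That invariance alone is not enough: the bracket has $H(L,M)$ on the right of the argument, so right-translating $t$ by $\kappa\in T(\Q_p)\cap K_p^{(0)}(M)$ produces $\Phi_F(tu\,\kappa H)=\Phi_F\big(tuH\cdot(H^{-1}\kappa H)\big)$, and one must check that $H^{-1}\kappa H\in K_p$. This holds exactly because the upper-right entry of $\kappa$ is divisible by $M$ (the defining condition of $K_p^{(0)}(M)$), which is why the class group is $\Cl_d(M)$ and not $\Cl_d$. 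Your proposal uses the correct coset decomposition~\eqref{e:tcosetca12} and the correct group $K_p^{(0)}(M)$, so you clearly have the right picture, but you should state the conjugation computation $H^{-1}\big(T(\Q_p)\cap K_p^{(0)}(M)\big)H\subset K_p$ explicitly rather than appealing to the full $K_p$-invariance of $\Phi_F$, which on its face would (incorrectly) suggest $\Cl_d$.
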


\begin{proof}
  For the purpose of this proof, we shorten $H(L,M)$ to $H$ whenever
  convenient.  Note that 
$$B(L, M; \Phi_F) = B(1, 1; \Phi_F^{L,M})
$$
where the automorphic form $\Phi_F^{L,M}$ is given by 
$$
\Phi_F^{L,M} (g) = \Phi_F(gH).
$$

Define 
$$
H_\infty = \begin{pmatrix}LM^2&&&\\&LM&&\\&&1&\\&&&M\end{pmatrix} \in
G(\R)^+,
$$
and define the Siegel modular form 
\begin{equation}\label{deffpr}
  F'(Z) =  (LM)^{-k}F(H_\infty^{-1}Z),
\end{equation}
which is in $\cusp_k(\Gamma^\ast(LM^2))$, as one can easily show.
\par
Let $\Phi_{F'}$ be the ad\'elization of $F'$ as defined
by~\eqref{adelization}. We claim that $\Phi_F^{L,M} = \Phi_{F'}$. To
see this, since both functions are right invariant under the group
$$
\prod_{ p \nmid  LM^2 } K_p \prod_{p |LM^2} K_p^{LM^2},
$$ 
it is enough to show that $\Phi_F^{L,M}(g_\infty) =
\Phi_{F'}(g_\infty)$ for $g_\infty \in G(\R)^+$. This is shown by the
following computation:
\begin{align*} 
  \Phi_F^{L,M}(g_\infty) &= \Phi_F(g_\infty H)\\
  &= \Phi_F(H_\infty^{-1}g_\infty) \\
&=  \mu_2(H_\infty^{-1}g_\infty)^k\det(J(H_\infty^{-1}g_\infty,
  iI_2))^{-k} F(H_\infty^{-1}g_\infty(i))\\
&=  (LM)^{-k}\mu_2(g_\infty)^k\det(J(g_\infty, iI_2))^{-k}
  F(H_\infty^{-1}g_\infty(i)) \\&=\Phi_{F'}(g_\infty).
\end{align*}
\par
Hence we are left with the problem of evaluating $B(1, 1;
\Phi_{F'})$. Note that $\Phi_{F'}$ is right invariant under
$K_p^{(0)}(M)$ (where we think of $\GL_2$  as a subgroup of $\GSp_4$ via~\eqref{gl2embed}).  Using~\eqref{e:tcosetca12}, the same arguments as
in~\cite[Prop. 2.8.5]{saha-thesis}, give us
$$
B(1, 1; \Phi_{F'}) = e^{-2 \pi \Tr (S)}
\frac{1}{| \Cl_d(M) |} \sum_{c\in \Cl_d(M)} \overline{\Lambda(c)}a(F',S_c)
$$ 
for a suitably normalized Haar measure, where $a(F',T)$ denotes the
Fourier coefficients of $F'$. Using~\eqref{deffpr}, one can easily
check that 
$$
a(F',S_c) = (LM)^{-k}a(F,S_c^{L,M}),
$$
and this completes the proof.
\end{proof}

\begin{remark}
  In an earlier preprint version of this paper, we had claimed such a
  result with a sum over $c$ in $\Cl_d$ instead of $\Cl_d(M)$. This
  was incorrect (when $M\not=1$); the mistake in the proof was to assume that
   $\Phi_{F'}$ is invariant under the bigger subgroup $\GL_2(\Z_p)$ when
  arguing as in~\cite[Prop. 2.8.5]{saha-thesis}.
\end{remark}

\begin{remark} 
  The above result is one of the three crucial ingredients that are
  required for the proof of the asymptotic Petersson-type formula
  (Proposition~\ref{pr:petersson}) which forms the technical heart of
  this paper. The other two ingredients are Sugano's formula
  (Theorem~\ref{t:sugano}) and the asymptotic orthogonality for
  Poincare series (Proposition~\ref{proppetersson2}).
\end{remark}

\subsection{Local Bessel models and Sugano's
  formula}\label{s:localbesselsugano} 

Let $\pi = \otimes_v \pi_v$ be an irreducible automorphic cuspidal
representation of $G(\A)$ with trivial central character and $V_\pi$
be its space of automorphic forms. We assume that $\pi$ is unramified
at all finite places. Let $S$ be a positive definite, symmetric,
semi-integral matrix such that $-d = -4 \det(S)$ is the discriminant
of the imaginary quadratic field $\field=\Q(\sqrt{-d})$. Let $\psi$,
$\Lambda$ be defined as in the previous Section. Define the groups $R,
T, U$ as before and the Bessel function $B_\Phi$ on $G(\A)$ as
in~\eqref{defbessel}, for a function $\Phi = \prod_v \Phi_v$ which is
a pure tensor in $\pi$.

For a finite prime $p$, we use $\left(\frac{\field}{p}\right)$ to
denote the Legendre symbol; thus $\left(\frac{\field}{p}\right)$
equals $-1$, $0$ or $1$ depending on whether the prime is inert,
ramified or split in $\field$. In the latter two cases, we use
$p_{\field}$ to denote an element of $\field_p = \field \otimes_\Q
\Q_p$ such that $N_{\field/\Q} (p_{\field}) \in p \Z_p^\times.$

Outside $v=\infty$, the local representations are unramified spherical
principal series. Therefore, by the uniqueness of the Bessel model for
$G$, due to Novodvorsky and Piatetski-Shapiro~\cite{nov}, we have
\begin{equation}\label{besseluniqueness} 
  B_\Phi(g) =  B_\Phi(g_{\infty}) \prod_{p} B_p(g_p)
\end{equation}
where $B_p$ is a local Bessel function on $G(\Q_p)$, the definition of
which we will now recall.

\begin{remark}
  If the global Bessel space is zero, then both sides
  of~\eqref{besseluniqueness} are zero. In
  particular,~\eqref{besseluniqueness} remains valid regardless of
  whether our choice of $S$ and $\Lambda$ ensures a non-zero Bessel
  model.
\end{remark}

To describe the local Bessel function $B_p$ for a prime $p$, let $\B$
be the space of locally constant functions $\varphi$ on $G(\Q_p)$
satisfying
$$
\varphi(tuh)= \Lambda_p(t)\theta_p(u)\varphi(h), \text{ for } t\in
T(\Q_p),u \in U(\Q_p), h \in G(\Q_p).
$$ 
\par
Then by Novodvorsky and Piatetski-Shapiro~\cite{nov}, there exists a
unique subspace $\B(\pi_p)$ of $\B$ such that the right regular
representation of $G(\Q_p)$ on $\B(\pi_p)$ is isomorphic to
$\pi_p$. Let $B_p$ be the unique $K_p$-fixed vector in $\B(\pi_p)$
such that $B_p(1) =1$. Therefore we have
\begin{equation}\label{e:bformula}
  B_p(tuhk)= \Lambda_p(t)\theta_p(u)B_p(h),
\end{equation} 
for $t\in T(\Q_p),u \in U(\Q_p), h \in G(\Q_p), k\in K_p$.
\par
Let $h_p(l,m) \in G(\Q_p)$ be the matrix defined as follows:
$$
h_p(l,m):=\begin{pmatrix}p^{l+2m}&&&\\&p^{l+m}&&\\&&1&\\&&&p^{m}
\end{pmatrix}.
$$ 
\par
As explained in~\cite{fur}, the local Bessel function $B_p$ is
completely determined by its values on $h_p(l,m).$ An explicit formula
for $B_p(h_p(l,m))$ in terms of the Satake parameters is stated
in~\cite{bff}. This formula can be neatly encapsulated in a generating
function, due to Sugano~\cite{sug}, which we now explain.
\par
Because $\pi_{p}$ is spherical, as recalled earlier, it is the
unramified constituent of a representation $\chi_1 \times \chi_2
\rtimes \sigma$ induced from a character of the Borel subgroup
associated to unramified characters $\chi_1, \chi_2, \sigma$ of
$\Q_p^\times$, and because it has trivial central character (since
$\pi$ does) we have $\chi_1 \chi_2 \sigma^2 =1$. Let us put
$(a_p,b_p)=(\sigma(p),\sigma(p)\chi_1(p))\in Y_p$, and as in the
definition of the measure $\mu_p$, let
$$
\lambda_p=\sum_{\substack{x \in \field_p^\times / \Z_{\field,
      p}^\times\\N(x)=p}}{\Lambda_p(x)},
$$
where the number of terms in the sum is
$1+\left(\frac{\field}{p}\right)$.
\par
The next Theorem is due to Sugano~\cite[p. 544]{sug} (the reader may
also consult~\cite[(3.6)]{fur}).

\begin{theorem}[Sugano]\label{t:sugano}
  Let $\pi$ be an unramified spherical principal series representation
  of $G(\Q_p)$ with associated local parameters $(a,b)\in Y_p$ and
  spherical Bessel function $B_p$ as above. Then we have
\begin{equation}\label{e:sugano}
B_p(h_p(l,m))= p^{-2m-\frac{3l}{2}} U^{l, m}_p(a,b) 
\end{equation}
where for each $l,m\geq 0$, the function 
$$
U^{l, m}_p(a,b) = U^{l,m}_p(a,b;\field_p,\Lambda_p)
$$ 
is a Laurent polynomial in $\C[a,b,a^{-1}, b^{-1}]$, invariant under
the action of the Weyl group~(\ref{eq-weyl-group}), which depends only
on $p$, $\left(\frac{\field}{p}\right)$ and $\lambda_p$.
\par
More precisely, the generating function 
\begin{equation}
C_p(X,Y)=
C_p(X,Y;a,b) = \sum_{l \ge 0}\sum_{m \ge 0} U^{l,
    m}_p(a,b)X^mY^l 
\end{equation} 
is a rational function given by
\begin{equation}\label{defcp}
C_p(X,Y) =  \frac{H_p(X,Y)}{P_p(X)Q_p(Y)} 
\end{equation} 
where
\begin{align*}
  P_p(X) &= (1-abX)(1-ab^{-1}X)(1-a^{-1}bX)(1-a^{-1}b^{-1}X),
  \\
  Q_p(Y) &= (1-aY)(1-bY)(1-a^{-1}Y)(1-b^{-1}Y),
  \\
  H_p(X,Y) &= (1 + XY^2)\big(M_1(X)(1+X)+ p^{-1/2}\lambda_p\sigma(a,b)
  X^2\big) \\
  & \quad\quad- XY\big(\sigma(a,b) M_1(X)-
  p^{-1/2}\lambda_pM_2(X)\big) - p^{-1/2}\lambda_pP_p(X)Y +
  p^{-1}\left(\frac{\field}{p}\right)P_p(X)Y^2,
\end{align*}
in terms of auxiliary polynomials given by
\begin{align*} 
  \sigma(a,b) &= a +b +a^{-1} + b^{-1}, \qquad 
  \tau(a,b) = 1+ab+ ab^{-1}+a^{-1}b+a^{-1}b^{-1},\\
  M_1(X) &= 1 - \left(p - \left(\frac{\field}{p}\right)\right)^{-1}
  \left(
    p^{1/2}\lambda_p\sigma(a,b)
    -\left(\frac{\field}{p}\right)(\tau(a,b)-1)-\lambda_p^2 
  \right)  X 
  - p^{-1}\left(\frac{\field}{p}\right)X^2,
  \\
  M_2(X) &= 1 - \tau(a,b) X - \tau(a,b) X^2 + X^3.
\end{align*}
\end{theorem}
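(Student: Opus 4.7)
The plan is to follow essentially Sugano's original strategy: realize the Bessel functional explicitly on the induced principal series model, evaluate it against the spherical vector translated by $h_p(l,m)$, and repackage the resulting two-parameter family of values into the rational generating function $C_p(X,Y)$. First I would realize $\pi_p$ as the unramified constituent of $\chi_1 \times \chi_2 \rtimes \sigma$ with distinguished spherical vector $f_0$ normalized by $f_0(1)=1$. By the uniqueness of the Bessel model of Novodvorsky--Piatetski-Shapiro, the Bessel functional $\lambda$ on this induced model transforming by $\Lambda_p \otimes \theta_p$ under $R(\Q_p)$ is unique up to scalar and admits an integral representation analogous to the Jacquet integral for Whittaker models. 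After analytic continuation (or direct verification in a cone of absolute convergence), one has $B_p(g) = \lambda(\pi_p(g)f_0)/\lambda(f_0)$.

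The second step is to exploit the fact, already cited from Furusawa, that $B_p$ is determined by its values on the $h_p(l,m)$, reflecting the Bessel analogue of the Cartan decomposition $G(\Q_p) = \coprod_{l,m \ge 0} R(\Q_p) \, h_p(l,m) \, K_p$. For each $(l,m)$, the Iwasawa decomposition $G = B K_p$ turns $\lambda(\pi_p(h_p(l,m))f_0)$ into a finite-dimensional integral on a subset of $U(\Q_p) \times T(\Q_p)$ whose shape depends on $l$ and $m$. The prefactor $p^{-2m-3l/2}$ comes from $\delta_B^{1/2}$ evaluated at $h_p(l,m)$, and the remaining integral reduces to evaluating $\chi_1,\chi_2,\sigma$ on finitely many torus elements and pairing against $\theta_p$ and $\Lambda_p$. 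The $W$-invariance of $U^{l,m}_p$ in $(a,b)$ is then visible from the fact that $\pi_p$ as an abstract representation is independent of the ordering of the inducing characters used in the realization.

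To obtain the closed form for $C_p(X,Y)$, I would sum $\sum_{l,m \ge 0} U^{l,m}_p(a,b) X^m Y^l$ directly: the $m$-direction is governed by a symmetric Hecke-type operator whose characteristic polynomial on the spin representation produces the denominator $P_p(X)$, while the $l$-direction is similarly controlled by the standard $4$-dimensional representation, giving $Q_p(Y)$. The hard part will be the explicit derivation of the numerator $H_p(X,Y)$: the three cases $\left(\frac{\field}{p}\right) = -1, 0, 1$ enter the combinatorics of the torus integral quite differently (with $\lambda_p$ equal to $0$, $\Lambda_p(\mathfrak{p})$, or $\Lambda_p(\mathfrak{p}_1) + \Lambda_p(\mathfrak{p}_2)$ respectively), and packaging their contributions together with the auxiliary polynomials $M_1(X)$, $M_2(X)$, $\sigma(a,b)$, $\tau(a,b)$ into one uniform expression requires substantial algebraic bookkeeping. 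As a sensible correctness check I would expand $C_p(X,Y)$ to low order against direct computations, e.g.\ confirming $U^{0,0}_p(a,b)=1$ and reproducing $U^{1,0}_p, U^{0,1}_p$ from the integral formula, together with the Weyl-group invariance in $(a,b)$, which serves as a strong consistency condition on the combinatorial simplifications.
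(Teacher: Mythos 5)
The paper does not prove this theorem: it cites Sugano~\cite{sug} (and points to~\cite{bff} and~\cite{fur}) for it, so there is no internal proof to compare against. What you have written is a plan rather than a proof. Your identification of the prefactor $p^{-2m-3l/2}$ with $\delta_B^{1/2}(h_p(l,m))$ does check out, and the high-level framing (unique spherical Bessel functional, values on $h_p(l,m)$ determine $B_p$, package into a generating function) is correct. But the entire content of the theorem --- the specific numerator $H_p(X,Y)$ and the way $M_1$, $M_2$, $\lambda_p$, $\bigl(\tfrac{\field}{p}\bigr)$ enter --- is deferred to ``substantial algebraic bookkeeping,'' and the Jacquet-integral route you sketch does not make that bookkeeping tractable. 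The Bessel subgroup $R=TU$ involves the torus $T\simeq\field_p^\times$, whose structure changes with the splitting type of $p$; the unfolding of $\lambda(\pi(h_p(l,m))f_0)$ against the Iwasawa decomposition therefore does not proceed like a Whittaker computation, and the inert/split/ramified cases enter the orbit sum and its convergence very differently. Nothing in your sketch shows how they recombine into the single uniform expression.

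The derivation in the sources works by a different mechanism: one acts on $B_p$ with generators of the spherical Hecke algebra of $G(\Q_p)$, computes the resulting two-parameter linear recursion for $B_p(h_p(l,m))$ by explicit coset decomposition (this is precisely where $\lambda_p$ and the splitting of $p$ appear and where the case distinctions are absorbed), and then solves the recursion as a rational generating function; the Satake eigenvalue relations force the denominator $P_p(X)Q_p(Y)$, and $H_p(X,Y)$ is the boundary data of the recursion. A point of attribution: you have the two representations swapped --- $Q_p(Y)$, with roots $a,b,a^{-1},b^{-1}$, corresponds to the four-dimensional spin representation, while $P_p(X)$, with roots $ab,ab^{-1},a^{-1}b,(ab)^{-1}$, is the nontrivial part of the five-dimensional standard/projection representation. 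Finally, the low-order checks against $U^{0,0}_p$, $U^{1,0}_p$, $U^{0,1}_p$ are useful sanity tests, but they only verify a formula that must already have been produced; they do not replace the derivation of $H_p$.
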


\begin{remark}
For instance, we note that
$$
U^{0, 0}_p(a,b) = 1
$$ 
and that
\begin{equation}\label{eq-u10}
  U^{1, 0}_p(a,b) = \sigma(a,b)-p^{-1/2}\lambda_p
  =a +b +a^{-1}+ b^{-1} - p^{-1/2}\lambda_p.
\end{equation}
\par
We also note that taking $X=0$ leads to the simple formula
\begin{equation}\label{e:onevar-sugano}
  \sum_{l \ge 0} U^{l, 0}_p(a,b)Y^l =\frac{
    1 -p^{-\frac{1}{2}}\lambda_pY +
    p^{-1}\left(\frac{\field}{p}\right)Y^2}{Q_p(Y)},
\end{equation}
which we will use later on. The formula for $Y=0$ is more complicated,
but we note (also for further reference) that it implies the formula
\begin{equation}\label{eq-u01}
  U^{0,1}_p(a,b)=\tau(a,b) 
  - \left(p - \left(\frac{\field}{p}\right)\right)^{-1}
  \left(p^{1/2}\lambda_p\sigma(a,b)
    -\left(\frac{\field}{p}\right)(\tau(a,b)-1)-\lambda_p^2 
  \right).
\end{equation}
\end{remark}

As was the case for the definition of the measures $\mu_p$, we have
written down a concrete formula. These are not very enlightening by
themselves (though we will use the special cases above), and the
intrinsic point of view is that of \emph{Macdonald
  polynomials}~\cite{macd} associated to a root system. In particular,
this leads to the following important fact:

\begin{proposition}\label{pr:pol-basis}
  Let $(d,\Lambda)$ be as before. For any fixed prime $p$, the
  functions
$$
(a,b)\mapsto U_p^{l,m}(a,b)
$$
where $l$, $m$ run over non-negative integers, form a basis of the
space of Laurent polynomials in $\C[a,b,a^{-1},b^{-1}]$ which are
invariant under the group $W$ generated by the three transformations
above. 
\par
Moreover, any such Laurent polynomial $\varphi$ which has total degree
$d$ as polynomial in the variables $(a+a^{-1},b+b^{-1})$ can be
represented as a combination of polynomials $U_{p}^{l,m}(a,b)$ with
$l+2m\leq d$.
\end{proposition}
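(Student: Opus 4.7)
The plan is to realize $\{U_p^{l,m}\}$ as an upper-triangular modification of a monomial basis of the Weyl-invariant ring with respect to a natural grading.

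First, I would identify $R := \C[a,b,a^{-1},b^{-1}]^W$. The subgroup of $W$ generated by $(a,b)\mapsto(a^{-1},b)$ and $(a,b)\mapsto(a,b^{-1})$ has invariants $\C[x,y]$ (where $x = a+a^{-1}$, $y = b+b^{-1}$), and the remaining involution $(a,b)\mapsto(b,a)$ swaps $x$ with $y$, so $R = \C[x,y]^{S_2} = \C[x+y,\,xy] = \C[\sigma,\tau]$ (using $\tau - 1 = xy$). I would equip $R$ with the grading induced by $\deg(a^i b^j) = |i|+|j|$, under which $\deg\sigma = 1$ and $\deg\tau = 2$; then $R_{\leq N} := \{f \in R : \deg f \leq N\}$ has the basis $\{\sigma^l \tau^m : l+2m \leq N\}$.

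Next, from $P_p(X)Q_p(Y)C_p(X,Y) = H_p(X,Y)$, extracting the coefficient of $X^m Y^l$ yields
\[
U_p^{l,m} = [X^m Y^l]H_p - \sum_{(m',l')\neq(0,0)} ([X^{m'}]P_p)\,([Y^{l'}]Q_p)\,U_p^{l-l',m-m'}.
\]
Each linear factor of $P_p(X)$ has the form $1 - (\text{degree-}2)X$, so $\deg[X^{m'}]P_p \leq 2m'$; similarly $\deg[Y^{l'}]Q_p \leq l'$. Inspection of the formula for $H_p$ (using that all of its coefficients, such as $p^{-1/2}\lambda_p$, $p^{-1}(\field/p)$, or the $X$-coefficient of $M_1$, lie in $R_{\leq 2}$) gives $\deg[X^m Y^l]H_p \leq l + 2m$. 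Induction on $l + 2m$ then delivers the bound $\deg U_p^{l,m} \leq l + 2m$, which settles the second assertion of the proposition modulo the basis claim.

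For the basis assertion, I would analyze the top-degree components. In the ``classical limit'' obtained by formally setting $\lambda_p = 0$ and $(\field/p) = 0$, one has $H_p^{\mathrm{cl}}(X,Y) = (1+X)(1+XY^2) - \sigma XY$, and the coefficients of $H_p^{\mathrm{cl}}/(P_p Q_p)$ become explicit polynomial combinations of the products $h_l(a,b,a^{-1},b^{-1}) \cdot h'_m(ab,ab^{-1},a^{-1}b,a^{-1}b^{-1})$ of complete homogeneous symmetric functions in the roots of $Q_p$ and $P_p$. A short direct computation (verifiable in small cases and extended by induction) shows that $\{h_l h'_m : l + 2m = n\}$ has upper-triangular top-degree parts with respect to $\{\sigma^l \tau^m : l + 2m = n\}$ with non-zero diagonal entries. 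The $p$-dependent corrections in $H_p$ and $M_1$ all carry factors like $p^{-1/2}\lambda_p$, $p^{-1}(\field/p)$, or $(p-(\field/p))^{-1}$ that strictly lower the $(\sigma,\tau)$-degree of their contributions, so the top-degree piece of the full $U_p^{l,m}$ remains a non-vanishing (though $p$-dependent) scalar multiple of its classical analog. Combining this upper-triangularity with the degree bound shows that $\{U_p^{l,m} : l + 2m \leq N\}$ is a linearly independent subset of $R_{\leq N}$ of cardinality $\dim R_{\leq N}$, hence a basis.

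The main technical obstacle is Step 3: verifying term by term that each $p$-dependent perturbation in $H_p$ and $M_1$ strictly decreases the $(\sigma,\tau)$-degree of its contribution, so that the classical leading term is not cancelled. This is finite but delicate, particularly because of the auxiliary denominator $(p-(\field/p))^{-1}$. A more conceptual alternative would be to recognize $U_p^{l,m}$ as a Macdonald polynomial of type $C_2$ with parameters determined by $p$ and $\lambda_p$, and invoke the classical basis property in that setting.
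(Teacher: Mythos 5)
Your degree-bound argument (Step 2) is sound and is essentially the same induction the paper sketches (the paper performs it from the simpler generating series $1/(P_p(X)Q_p(Y))$ and the interpretation of $\sigma,\tau$ as characters of the fundamental representations of $\USp(4,\C)$). For the basis claim itself, however, you take a genuinely different route: the paper does not argue by triangularity at all, but simply observes -- citing Furusawa--Martin--Shalika -- that the $B_p(h_p(l,m))$, and hence the $U_p^{l,m}$, are specializations of Macdonald polynomials for the $C_2$ root system, and then invokes the standard fact that Macdonald polynomials form a basis of the $W$-invariant Laurent polynomials. So the ``conceptual alternative'' you mention at the end is in fact exactly the paper's proof.

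The hand-made triangularity argument in Step 3 has a genuine gap. You assert that ``the $p$-dependent corrections in $H_p$ and $M_1$ all carry factors \dots that strictly lower the $(\sigma,\tau)$-degree of their contributions.'' This is false. The coefficient of $X$ in $M_1(X)$ is
$$
c_1=-\Bigl(p-\Bigl(\tfrac{\field}{p}\Bigr)\Bigr)^{-1}\Bigl(p^{1/2}\lambda_p\sigma-\Bigl(\tfrac{\field}{p}\Bigr)(\tau-1)-\lambda_p^2\Bigr),
$$
which, in the split and ramified cases, has $(\sigma,\tau)$-degree exactly $2$ because of the $\tau$ term. Since this sits in front of $X^1Y^0$ and $l'+2m'=2$ there, the correction $c_1\tilde U_{l,m-1}$ contributes to $U_p^{l,m}$ precisely at the top degree $l+2m$; it does \emph{not} strictly lower the degree. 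It turns out (after an explicit computation using the fact that the top piece of $c_1$ is $(p-(\field/p))^{-1}(\field/p)\,\tau$ and that the $h'_m$ factor in $\tilde U_{l,m}$ has top piece $\tau^m$) that the top-degree part of $U_p^{l,m}$ is $\frac{p}{p-(\field/p)}$ times the top-degree part of $\tilde U_{l,m}$, so the conclusion of Step 3 is salvageable. But the stated reasoning does not establish this; one has to identify exactly which coefficients of $H_p$ attain the critical degree $l'+2m'$ and check that the surviving degree-preserving correction at $(l',m')=(0,1)$ leaves the transition matrix triangular with nonzero diagonal. As written, the proof simply asserts what needs to be verified.
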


\begin{proof} 
  Because of Theorem~\ref{t:sugano}, we can work with the Bessel
  functions $B_{p}(h_{p}(l, m))$ instead. But as shown in detail
  in~\cite[\S 3]{furmarsha}, these unramified Bessel functions are
  (specializations of) Macdonald polynomials associated to the root
  system of $G$, in the sense of~\cite{macd}. By the theory of
  Macdonald polynomials, these unramified Bessel functions form a
  basis for the Laurent polynomials in two variables that are
  symmetric under the action of the Weyl group $W$.
\par
The last statement, concerning the $U_p^{l,m}$ occurring in the
decomposition of $\varphi$ of bidegree $(d,d)$, can be easily proved
by induction from the corresponding fact for the coefficients (say
$\tilde{U}_{l,m}(a,b)$) of the simpler generating series
$$
\frac{1}{P_p(X)Q_p(Y)}=\sum_{l,m\geq 0}{\tilde{U}_{l,m}(a,b)X^mY^l},
$$
for which the stated property is quite clear. (It is also a standard
fact about the characters of representations of $\USp(4,\C)$, since
$\sigma(a,b)$ and $\tau(a,b)$ are the characters of the two
fundamental representations acting on a maximal torus.)
\end{proof}




\begin{lemma}\label{lm:bounds-ulm}
  Let $(d,\Lambda)$ be as before. Let $\sop$ be a finite set of primes
  and $(l_p)$, $(m_p)$ be tuples of non-negative integers, indexed by
  $\sop$. There exists an absolute constant $C\geq 0$ such that for
  every $(x_p)_{p\in \sop}=(a_p,b_p)\in X^{\sop}$, i.e., parameters of
  tempered representations, we have
$$
\Bigl|\prod_{p\in \sop}{U_p^{l_p,m_p}(a_p,b_p)}
\Bigr|\leq C^{|\sop|}
\prod_{p\in \sop}{(l_p+3)^3(m_p+3)^3}.
$$
\end{lemma}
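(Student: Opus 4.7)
My plan is to reduce to a single-prime bound and then read it off directly from Sugano's generating function in Theorem~\ref{t:sugano}. Since the quantity we wish to bound is a product
$$
\prod_{p\in\sop} U_p^{l_p,m_p}(a_p,b_p),
$$
it suffices, by multiplicativity, to prove that there exists an absolute constant $C\geq 0$ such that for any prime $p$, any tempered $(a,b)\in X$, and any $l,m\geq 0$, one has
$$
|U_p^{l,m}(a,b)|\leq C(l+3)^3(m+3)^3.
$$
The key point is that the bound must be uniform in $p$, which is why we will need the fact that $p^{-1/2}\leq 1$ and $|\lambda_p|\leq 2$.

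For the single-prime bound, the plan is to exploit the generating identity
$$
C_p(X,Y)=\frac{H_p(X,Y)}{P_p(X)\,Q_p(Y)}
$$
by separately expanding $1/P_p$ and $1/Q_p$ as power series. For tempered parameters, $|a|=|b|=1$, so the reciprocal roots $\alpha_1,\dots,\alpha_4$ of $P_p(X)$ (namely $a^{\pm 1}b^{\pm 1}$) and $\beta_1,\dots,\beta_4$ of $Q_p(Y)$ (namely $a^{\pm 1},b^{\pm 1}$) all have modulus $1$. Consequently the Taylor expansions
$$
\frac{1}{P_p(X)}=\sum_{m\geq 0}h_m(\alpha_1,\dots,\alpha_4)X^m,\qquad
\frac{1}{Q_p(Y)}=\sum_{l\geq 0}h_l(\beta_1,\dots,\beta_4)Y^l,
$$
where $h_k$ denotes the complete homogeneous symmetric polynomial of degree $k$ in four variables, yield coefficients bounded termwise by the number of monomials, namely
$$
|h_m(\alpha)|\leq \binom{m+3}{3}\leq (m+3)^3,\qquad
|h_l(\beta)|\leq \binom{l+3}{3}\leq (l+3)^3.
$$

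Next I would observe that $H_p(X,Y)$ is a polynomial in $X,Y$ of bidegree at most $(4,2)$, whose coefficients are universally bounded, uniformly in $p$ and in tempered $(a,b)$: the only building blocks appearing in $M_1$, $M_2$, $P_p$, and the explicit terms are $\sigma(a,b)$, $\tau(a,b)$, $\lambda_p$, $p^{-1/2}$ and $(\field/p)$, and these are all bounded by absolute constants ($|\sigma|\leq 4$, $|\tau|\leq 5$, $|\lambda_p|\leq 2$ since at most two primes lie over $p$ in $\field$ and $\Lambda$ takes values on the unit circle, $p^{-1/2}\leq 1$). Writing $H_p(X,Y)=\sum_{i\leq 4,\,j\leq 2}h_{i,j}X^iY^j$ and expanding
$$
U_p^{l,m}(a,b)=\sum_{i\leq 4,\,j\leq 2}h_{i,j}\,h_{m-i}(\alpha)\,h_{l-j}(\beta)
$$
(with the convention that terms with negative index vanish), the triangle inequality together with the bounds above yields $|U_p^{l,m}(a,b)|\leq C(l+3)^3(m+3)^3$ for an absolute $C$, since the number of terms is bounded and each factor is bounded as described.

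This completes the single-prime bound, and taking the product over $p\in\sop$ immediately gives the claimed inequality. There is no real obstacle in the argument; the only care needed is the bookkeeping that the coefficients of $H_p$ stay bounded uniformly in $p$ (which is precisely why the statement admits an absolute constant), and that the complete-homogeneous-symmetric bound correctly captures the cubic growth coming from the fact that $P_p$ and $Q_p$ each have four roots, possibly coinciding, on the unit circle.
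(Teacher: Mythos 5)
Your proof is correct and follows essentially the same route as the paper: reduce to a single prime, expand $1/P_p(X)$ and $1/Q_p(Y)$ in geometric/power series with unit-modulus reciprocal roots to get the cubic bounds on the coefficients, observe that $H_p(X,Y)$ has bounded bidegree with coefficients bounded uniformly in $p$ and the tempered parameters, and combine. (One small bookkeeping point you gloss over: the coefficient in $M_1(X)$ has a $p^{1/2}$ in the numerator, but it sits next to $(p-\left(\frac{\field}{p}\right))^{-1}$, so the product is still absolutely bounded; the paper is equally brief on this.)
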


\begin{proof}
  It is enough to prove this when $\sop=\{p\}$ is a single prime, and
  $l_p=l$, $m_p=m\geq 0$. Then by Sugano's formula, the polynomial
  $U_p^{l,m}(a,b)$ is a linear combination of at most $14$ polynomials
  of the type arising in the expansion of the denominator only, i.e.,
  of
$$
\frac{1}{P_p(X)Q_p(Y)},
$$
and moreover the coefficients in this combination are absolutely
bounded as $p$ varies (they are either constants or involve quantities
like $p^{-1/2}$).
\par
Expanding in geometric series and using $|a|= 1$, $|b|=1$, the
coefficient of $X^mY^l$ in the expansion of the denominator is a
product of the coefficient of $X^m$ and that of $Y^l$; each of them is
a sum, with coefficient $+1$, of $\leq (m+3)^3$ (resp. $(l+3)^3$)
terms of size $\leq 1$. The result follows from this.
\end{proof}

\begin{remark}
  Sugano's formula explicitly computes the Bessel function in terms of
  Satake parameters in the case of an unramified representation. The
  other case where an explicit formula for the Bessel function at a
  finite place is known is when $\pi_p$ is Steinberg,
  see~\cite{lfshort},~\cite{pitale-bessel}.
\end{remark}

\subsection{The key relation}\label{s:keyrel}

We consider now Siegel modular forms again. Let
$$
F(Z) = \sum_{T > 0} a(F,T) e(\Tr(TZ)) \in \cusp_k
$$ 
be an eigenfunction for all the Hecke operators. Define its
ad\'elization $\Phi_{F}(g)$ by~\eqref{adelization}. This is a function
on $G(\Q)\bs G(\A)$ and we may consider the representation of $G(\A)$
generated by it under the right-regular action. Because we do not have
strong multiplicity one for $G$, we can only say that this
representation is a \emph{multiple} of an irreducible representation
$\pi_F$. However, the unicity of $\pi_F$, as an isomorphism class of
representations of $G(\A)$, is enough for our purposes.\footnote{\
  Added in proof: in a recent preprint, Narita, Pitale and Schmidt
  show that $\Phi_F$ does indeed generate an irreducible
  representation.}
\par
We can factor $\pi_F =\otimes \pi_v(F)$ where the local
representations $\pi_v$ are given by:
$$
\pi_v(F) =  
\begin{cases}\text{holomorphic discrete series} & \text{ if } v=\infty,\\
  \text{unramified spherical principal series} &\text{ if } v \text{
    is finite, }
\end{cases}
$$
and we denote by $(a_p(F),b_p(F))\in Y_p$, the local parameters
corresponding to the local representation $\pi_p(F)$ at a finite
place. 
\par
Let once more $d$ be a positive integer such that $-d$ is a
fundamental discriminant and define $S$ as in~\eqref{defsd}. Choose an
ideal class character $\Lambda$ of $\field$. Let the additive
character $\psi$, the groups $R, T, U$ and the matrices $S_c$,
$S_{c'}^{L,M}$ be defined as in Section~\ref{s:classical}.  For
positive integers $L, M$, define $B(L, M; \Phi_F)$
by~\eqref{blmdef}. Then, by the uniqueness of the Bessel model
(i.e.,~(\ref{besseluniqueness}), we have
$$
B(L, M; \Phi_F) = B(1, 1; \Phi_F)\prod_{p}
B_{p}(h_{p}(l_p,m_p)) =
B(1, 1; \Phi_F)\prod_{p\mid LM}
B_{p}(h_{p}(l_p,m_p)),
$$
where $l_p$ and $m_p$ are the $p$-adic valuations of $L$ and $M$
respectively.  Now, using Sugano's formula~(\ref{e:sugano}) and twice
Proposition~\ref{classicalbesselprop} -- which has the effect of
canceling the constant $r\not=0$ that appears in the latter --, we
deduce:

\begin{theorem}\label{t:relation} 
  Let $(d,\Lambda)$ be as before, let $p$ be prime and let $U^{l,
    m}_p(a,b)$ be the functions defined in Theorem~\ref{t:sugano}. For
  any $F\in\cusp_{2k}^*$ and integers $L$, $M\geq 1$, we have
$$
\frac{|\Cl_d|}{|\Cl_d(M)|}\sum_{c'\in \Cl_d(M)}
\overline{\Lambda(c')}a(F,S_{c'}^{L,M}) =
L^{k-\frac{3}{2}}M^{k-2}\sum_{c\in\Cl_d} \overline{\Lambda(c)}a(F,S_c)
\prod_{p\mid LM}{U^{l_p, m_p}_{p}(a_{p}(F),b_{p}(F))},
$$
where $l_p$ and $m_p$ are the $p$-adic valuations of $L$ and $M$
respectively. 
\end{theorem}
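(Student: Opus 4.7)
The plan is to derive the identity by combining three ingredients already at hand: the local-global factorization of the spherical Bessel function recorded in~\eqref{besseluniqueness} (and specialized in the display just preceding the theorem), Sugano's explicit formula (Theorem~\ref{t:sugano}) for $B_p(h_p(l,m))$, and Proposition~\ref{classicalbesselprop} applied twice --- once at the adelic point $H(L,M)$ and once at $H(1,1)=1$ --- to express each Bessel value as a $\Lambda$-twisted sum of Fourier coefficients. The whole identity will then emerge by algebraic bookkeeping.

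First I would write
$$
B(L,M;\Phi_F) = B(1,1;\Phi_F)\prod_{p\mid LM}B_p\bigl(h_p(l_p,m_p)\bigr),
$$
and substitute Sugano's identity $B_p(h_p(l_p,m_p))=p^{-2m_p-3l_p/2}\,U_p^{l_p,m_p}(a_p(F),b_p(F))$. Since $L=\prod_p p^{l_p}$ and $M=\prod_p p^{m_p}$, the product of prime powers telescopes to $L^{-3/2}M^{-2}$, yielding
$$
B(L,M;\Phi_F) = B(1,1;\Phi_F)\,L^{-3/2}M^{-2}\prod_{p\mid LM}U_p^{l_p,m_p}(a_p(F),b_p(F)).
$$

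Next I would apply Proposition~\ref{classicalbesselprop} to both sides. For general $(L,M)$ it rewrites the left side as $r\,e^{-2\pi\Tr(S)}(LM)^{-k}|\Cl_d(M)|^{-1}$ times the sum over $c'\in\Cl_d(M)$ of $\overline{\Lambda(c')}a(F,S_{c'}^{L,M})$. At $(L,M)=(1,1)$, using $H(1,1)=1$, the identification $\Cl_d(1)=\Cl_d$ (because $K_p^{(0)}(1)=\GL_2(\Z_p)$), and the tautology $S_c^{1,1}=S_c$, it rewrites $B(1,1;\Phi_F)$ as $r\,e^{-2\pi\Tr(S)}|\Cl_d|^{-1}$ times the sum over $c\in\Cl_d$ of $\overline{\Lambda(c)}a(F,S_c)$.

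The step I would flag as the main subtlety --- and the reason Proposition~\ref{classicalbesselprop} has to be applied on both sides rather than just one --- is that the unspecified Haar-measure constant $r$ and the archimedean factor $e^{-2\pi\Tr(S)}$ are not pinned down explicitly in the paper, but they appear identically on both sides and therefore cancel without any need to evaluate them. After this cancellation, multiplying through by $|\Cl_d|(LM)^k$ converts the $L^{-3/2}M^{-2}$ into $L^{k-3/2}M^{k-2}$ and produces exactly the stated identity. Beyond this cancellation there is no substantive obstacle; everything else is careful tracking of exponents and ideal-class indices.
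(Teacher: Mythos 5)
Your proposal is correct and follows exactly the route the paper takes: factor $B(L,M;\Phi_F)$ using uniqueness of the Bessel model~\eqref{besseluniqueness}, substitute Sugano's formula~\eqref{e:sugano} for the local factors, and apply Proposition~\ref{classicalbesselprop} at both $H(L,M)$ and $H(1,1)$ so that the unknown constant $r$ and the factor $e^{-2\pi\Tr(S)}$ cancel. The paper itself presents this as a short deduction immediately preceding the theorem, and your bookkeeping of the exponents and the $|\Cl_d|/|\Cl_d(M)|$ ratio matches it exactly.
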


The point of this key result is that it allows us to study functions
of the Satake parameters of $\pi_F$ using Fourier coefficients of $F$,
although there is no direct identification of Hecke eigenvalues with
Fourier coefficients.

\begin{remark}
  This relation holds for every $\Lambda$, but we can not remove the
  sum over $c$ by Fourier inversion because the functions
  $U_p^{l_p,m_p}$ \emph{depend on $\Lambda$}.
\end{remark}

\section{Poincar\'e series, Petersson formula and
  orthogonality}\label{poincare}

The relation given by Theorem~\ref{t:relation} between Fourier
coefficients of $F$ on the one hand, and functions of the spectral
Satake parameters of $\pi_F$ on the other, will enable us to deduce
equidistribution results for Satake parameters from asymptotics for
Fourier coefficients. For this, we need a way to understand averages
of Fourier coefficients of Siegel forms $F$ in a suitable family; this
will be provided by a variant of the classical Petersson formula. In
order to prove the latter, we follow the standard approach: we
consider Poincar\'e series and study their Fourier coefficients.

\subsection{Poincar\'e series and the Petersson formula}

Given a symmetric semi-integral positive-definite matrix $Q$ of size
two, the $Q$-th Poincar\'e series of weight $k$, denoted $P_{k, Q}$,
is defined as follows:
$$
P_{k, Q}(Z) = \sum_{\gamma \in \Delta\bs \Gamma}\det(J(\gamma,
Z))^{-k}e(\Tr(Q \gamma (Z)))
$$ 
where $\Delta$ is the subgroup of $\Gamma$ consisting of matrices of
the form $\begin{pmatrix}1&U\\0&1\end{pmatrix}$, with $U$ symmetric.
\par
It is known that $P_{k,Q}$ is absolutely and locally uniformly
convergent for $k\geq 6$, and defines an element of $\cusp_k$ (as
first proved by Maass). In fact, any Siegel cusp form $F\in \cusp_k$
is a linear combination of various $P_{k, Q}$ (with $Q$ varying). This
follows from the basic property of Poincar\'e series: they represent,
in terms of the Petersson inner product, the linear forms on $\cusp_k$
given by Fourier coefficients. Precisely, for $F \in \cusp_k$ with
Fourier expansion
$$
F(Z) = \sum_{T > 0} a(F,T) e(\Tr(TZ)),
$$ 
we have the crucial identity
\begin{equation}\label{eq:poincare-basic}
\langle F, P_{k, T} \rangle = 8 c_k (\det T)^{-k+ 3/2}a(F,T),
\end{equation}
where
\begin{equation}\label{defck}
c_k =  \frac{\sqrt{\pi}}{4}(4 \pi)^{3 -2k}
  \Gamma(k - \tfrac{3}{2})\Gamma(k-2).
\end{equation}
(see~\cite{kohp} or~\cite[p. 90]{klingen} for instance).
\par
\par
We are interested in the limiting behavior of $a(k;c,c',L,M)$ as $k\ra
+\infty$. The following qualitative result was proved in~\cite{kst}:

\begin{proposition}[Asymptotic orthogonality, qualitative
  version]\label{proppetersson}
  For $L$, $M\geq 1$, $c\in \Cl_d$ and $c'\in \Cl_d(M)$, let
$$
a(k;c,c',L,M)=a(P_{k,S_{c}},S_{c'}^{L,M})
$$
denote the $S_{c'}^{L,M}$-th Fourier coefficient of the Poincar\'e series
$P_{k, S_{c}}$. Then,
we have
$$
a(k;c,c',L,M) \ra |\Aut(c)| \cdot \delta(c,c';L;M)
$$
as $k\ra + \infty$ over the even integers. Here
$$
\delta(c,c';L;M)= \begin{cases} 1 &\text{ if } L=1, M=1\text{ and $c$
    is $\GL(2,\Z)$-equivalent to $c'$,}\\
  0 & \text{ otherwise, }
\end{cases}
$$
and $|\Aut(c)|$ is the finite group of integral points in the
orthogonal group $O(T)$ of the quadratic form defined by $c$.
\end{proposition}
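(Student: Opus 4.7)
The plan is to treat this as the qualitative (large-$k$) consequence of a Kitaoka-style Petersson formula for Siegel cusp forms of genus $2$. I would extract the Fourier coefficient of $P_{k,S_c}$ at $T = S_{c'}^{L,M}$ by unfolding against $e(-\Tr(T X))$ in the real part $X = \Reel Z$, and then partition the resulting sum over $\Delta\backslash\Gamma$ according to the Bruhat cell of $\gamma = \begin{pmatrix} A & B \\ C & D \end{pmatrix}$ relative to the Siegel parabolic, classifying cosets by the rank $0$, $1$, or $2$ of the $C$-block.

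The ``main'' (rank-$0$) contribution comes from $\gamma$ in the Siegel parabolic; after collapsing the character-sum, this piece is a Kronecker-type expression that is non-zero only when $S_{c'}^{L,M}$ is $\GL(2,\Z)$-equivalent to $S_c$, and in that case contributes $|\Aut(c)|$ by the elementary fact that the stabilizer of $S_c$ under $\GL(2,\Z)$ is the integral orthogonal group $O(S_c)$. Comparing determinants gives $L^2 M^2 \det(S_{c'}) = \det(S_c) = d/4$, forcing $L = M = 1$ and then $c \sim c'$ as $\GL(2,\Z)$-classes, which reproduces $|\Aut(c)|\cdot \delta(c,c';L,M)$ exactly.

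The cells of positive rank give sums of generalized (matrix) Kloosterman sums weighted by special-function factors --- essentially Bessel / confluent-hypergeometric integrals arising from the $\det(\Imag Z)^{k-3/2}$ factor --- whose arguments stay in compacta once $c, c', L, M$ are fixed, but whose index grows linearly in $k$. The key step is to show each such term, and the tail of the series as a whole, tends to $0$ as $k\to +\infty$. The crucial analytic input is the classical bound $|J_{\nu}(x)| \ll (ex/(2\nu))^{\nu}$, which forces super-polynomial decay in $k$ at each fixed modulus; combined with a crude polynomial (or even trivial) bound on the Kloosterman sums and a count of cosets of bounded modulus, this lets one invoke dominated convergence to kill the off-diagonal contribution.

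The main obstacle is technical rather than conceptual: writing the Bruhat decomposition of $\Sp(4,\Z)$ explicitly and carrying out the Fourier integration cell by cell is considerably more involved than in the $\SL(2,\Z)$ case, and one must verify absolute convergence \emph{uniformly in $k$} before passing to the limit inside the sum. This is precisely the bookkeeping developed in \cite{kst} (itself building on \cite{Kit}), and at the level of this qualitative statement I would simply invoke that machinery rather than redo the cell-by-cell analysis; the quantitative refinement needed later (Proposition~\ref{proppetersson2}) is where the hard work is really required.
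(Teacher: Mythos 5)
Your sketch follows precisely the route the paper takes: decompose by rank of the $C$-block \`a la Kitaoka, extract $|\Aut(c)|\cdot\delta(c,c';L,M)$ from the rank-$0$ cell via the determinant comparison $\det(S_{c'}^{L,M})=L^2M^2(d/4)=\det(S_c)$ and the $\GL(2,\Z)$-stabilizer, and kill the higher-rank cells using the rapid decay of $J_{k-3/2}$ in $k$ --- and the paper does indeed simply cite \cite{kst} for this qualitative statement, proving only the quantitative refinement (Proposition~\ref{proppetersson2}) in full. One small imprecision: for fixed $L,M,c,c'$ the Bessel-function arguments in the rank-$2$ cell do \emph{not} stay in a compactum (the larger eigenvalue $s_2$ of $P(C)$ can grow even when $|C|$ is bounded), but since $s_1 s_2 \asymp \sqrt{|T||Q|}/|C|$ stays bounded, the super-polynomial decay from the small-argument factor $J_{k-3/2}(4\pi s_1\sin\theta)$ still makes dominated convergence go through, exactly as in the paper's estimation of $R_{22}$.
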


\begin{remark}
  Since this will be a subtle point later on, we emphasize that
  $\delta(c,c';L,M)=1$ when $c$ and $c'$ are invariant under
  $\GL(2,\Z)$, not under $\SL(2,\Z)$.
\end{remark}

This is sufficient for some basic applications, but (for example) to
handle the low-lying zeros, we require a quantitative version. We
prove the following:


\begin{proposition}[Asymptotic orthogonality, quantitative
  version]\label{proppetersson2}
With notation as above, we have
$$
a(k;c,c',L,M) =  |\Aut(c)| \cdot \delta(c,c';L;M) + L^{k-3/2}M^{k-2}
A(k;c,c',L,M)
$$ 
where
$$
A(k;c,c',L,M) \ll L^{1 + \eps}M^{3/2 + \eps}k^{-2/3}
$$
for any $\eps>0$, the implied constant depending
only on $\eps$ and $d$.
\end{proposition}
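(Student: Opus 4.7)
The strategy starts from Kitaoka's explicit computation of the Fourier expansion of the Siegel Poincaré series, obtained by partitioning the sum defining $P_{k,S_c}$ according to the Bruhat decomposition of $\Sp(4,\Z)$. Schematically, I would write
$$
a(P_{k,S_c},T) = \text{(identity-cell contribution)} + \sum_{\text{other cells}} \sum_{\xi} K(\xi;S_c,T)\, \mathcal{J}(\xi;S_c,T;k),
$$
where, for each non-identity cell, $\xi$ ranges over a set of arithmetic parameters, $K$ is a generalized Kloosterman sum, and $\mathcal{J}$ is a Bessel-type integral whose dependence on $k$ has to be controlled. With $T=S_{c'}^{L,M}$, the identity-cell contribution is exactly $|\Aut(c)|\cdot\delta(c,c';L,M)$; this recovers Proposition~\ref{proppetersson} and reduces the task to proving $A\ll L^{1+\eps}M^{3/2+\eps}k^{-2/3}$ by bounding the non-identity-cell contributions.

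My plan is then to estimate, cell by cell, the Bessel integrals $\mathcal{J}$ uniformly in $k$ and in the arithmetic data. The cells of small Weyl type essentially produce integrals of $J_{k-3/2}$ type, while the top Weyl cell produces a genuine two-variable Bessel function of Kitaoka type. The key analytic input is the uniform bound $|J_\nu(x)|\ll \nu^{-1/3}$ for $\nu=k-3/2$ and all $x>0$, which follows from the Airy-type uniform asymptotics near the transition range $x\asymp\nu$ together with monotone decay or trivial bounds outside. Applied once this gives a saving $k^{-1/3}$; to reach $k^{-2/3}$ I would expand the two-variable Kitaoka Bessel function as an iterated integral and either apply the one-variable Bessel bound twice, or combine a single Bessel bound with a stationary-phase / integration-by-parts gain in the remaining oscillatory integral.

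For the arithmetic side, the Kloosterman sums $K(\xi;\cdots)$ admit either trivial or Weil-type estimates, and the parameters $\xi$ in each cell are constrained by $\det T = (d/4)L^2M^2$, so summing over them gives polynomial factors in $L$ and $M$. A careful cell-by-cell bookkeeping, combined with the normalizing factors $L^{k-3/2}M^{k-2}$ that arise from the rescaling of $S_{c'}^{L,M}$ relative to $S_{c'}$ defined in~\eqref{eq-sclm}, would then yield exponents $1$ in $L$ and $3/2$ in $M$, reflecting the asymmetric way in which $L$ and $M$ enter that scaling.

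The principal obstacle I anticipate is establishing the Bessel estimate of $k^{-2/3}$ with constants uniform in all arithmetic parameters: the pointwise $k^{-1/3}$-type bound on $J_\nu$ is classical, but gaining the extra factor requires genuine stationary-phase analysis of the oscillatory integral in the top Bruhat cell, splitting the region of integration according to whether the phase is stationary and applying repeated integration by parts in the non-stationary region. The technical heart of the argument will thus be this uniform asymptotic bound for Kitaoka's two-variable Bessel function — refining the pointwise bounds that suffice for the qualitative Proposition~\ref{proppetersson} to the quantitative form with the required saving in $k$.
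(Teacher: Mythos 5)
Your overall plan coincides with the paper's proof: both start from Kitaoka's explicit decomposition of the Fourier coefficient of the Poincar\'e series (in the paper this is organized by the rank $0$, $1$, $2$ of the lower-left block $C$ rather than by the language of Bruhat cells, but it is the same decomposition), isolate the main term $|\Aut(c)|\,\delta(c,c';L,M)$ from the rank-$0$ piece, and then bound the remaining contributions by combining Kitaoka's Weil-type Kloosterman estimates with uniform bounds for $J_{k-3/2}$.

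However, you misidentify where the real difficulty lies. The $k^{-2/3}$ saving does \emph{not} require any stationary-phase refinement of Kitaoka's two-variable Bessel function. Kitaoka's $\mathcal{J}_k(P)=\int_0^{\pi/2} J_{k-3/2}(4\pi s_1\sin\theta)\,J_{k-3/2}(4\pi s_2\sin\theta)\,\sin\theta\,d\theta$ is literally a one-dimensional integral of a product of two ordinary $J$-Bessel factors, and in the dominant regime where both $s_1\gg 1$ and $s_2\gg 1$ (the set $\mathcal{C}_3$ in the paper), one simply applies the classical pointwise bound $J_\nu(x)\ll\nu^{-1/3}$ to each factor separately to get $k^{-2/3}$ at once --- exactly the first of the two options you list, and the simpler one. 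No Airy-type uniform asymptotics or integration by parts in the non-stationary range is needed beyond the standard $J_\nu(x)\ll\min(1,x/\nu)\nu^{-1/3}$; the complementary ranges (where some $s_i\ll 1$, or $x\ll\sqrt{k}$) yield super-polynomial decay in $k$ via the Taylor bound $J_\nu(x)\ll x^\nu/\Gamma(\nu+1)$. The genuinely technical part of the paper's proof is not the Bessel analysis but the bookkeeping: parametrizing $C\in M_2^*(\Z)$ by elementary divisors $(c_1,c_2,V,U)$, subdividing by the relative sizes of $s_1,s_2$, invoking Kitaoka's size estimates for the subsets $\mathcal{C}_i(\uple{c})$ and his Kloosterman-sum bound, and --- most importantly --- redoing Kitaoka's arguments (which were phrased with $k$ fixed and $\det T\to\infty$) so that the $k$-dependence is carried through explicitly. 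It is from this bookkeeping, driven by the scaling $\det(S_{c'}^{L,M})=(d/4)L^2M^2$ in~\eqref{eq-sclm} and the asymmetric way $L,M$ enter the parametrization, that the exponents $L^{1+\eps}M^{3/2+\eps}$ fall out; the rank-$1$ term actually yields the better saving $k^{-5/6}$, and the bottleneck $k^{-2/3}$ comes solely from the $\mathcal{C}_3$ part of the rank-$2$ term.
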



In the proof, for conciseness, we will write $|A|$ for the determinant
of a matrix. The basic framework of the argument is contained in the
work of Kitaoka~\cite{Kit}, who proved an estimate for the Fourier
coefficients $a(P_{k,Q},T)$ of Poincar\'e series for fixed $Q$ and
$k\geq 6$, in terms of the determinant $\det(T)$ (and deduced from
this an estimate for Fourier coefficients of arbitrary Siegel cusp
forms in $\cusp_k$, since the space is spanned by Poincar\'e
series). 
\par
However, Kitaoka considered $k$ to be fixed; our goal is to have a
uniform estimate in terms of $L$, $M$ and $k$, and this requires more
detailed arguments.
\par
In particular, we will require the following quite standard
asymptotics for Bessel functions:
\begin{align}\label{besselasymptotic1}
  J_{k}(x) &\ll \frac{x^k}{\Gamma(k+1)},\quad  \text{ if } k\geq 1,\ 0\leq
  x\ll \sqrt{k+1},\\
  J_{k}(x) &\ll \min(1,xk^{-1})k^{-1/3},\quad  \text{ if } k\geq 1,\ x\geq
  1,\label{besselasymptotic2}
\\
  J_{k}(x) &\ll \frac{2^k}{\sqrt{x}},\quad  \text{ if } k\geq 1,\ x>0,
\label{besselasymptotic3}
\end{align}
where the implied constants are absolute (the first inequality follows
from the Taylor expansion of $J_k(z)$ at $z=0$,
the second is~\cite[(2.11)]{ils}, and the third, which is very rough,
by combining $|J_k(x)|\leq 1$ when $x\leq 2k$, and,
e.g.,~\cite[(2.11')]{ils} for $x\geq 2k$).

\begin{proof}[Proof of Proposition~\ref{proppetersson2}]
Let
$$
T=S_{c'}^{L,M},\quad\quad Q=S_{c},
$$
so that we must consider the $T$-th Fourier coefficients of $P_{k,Q}$
(this notation, which clashes a bit with the earlier one for the torus
$T$, is chosen to be the same as that in~\cite{Kit}, in order to
facilitate references). Before starting, we recall that since we
consider $d$ to be fixed, so is the number of ideal classes, and hence
$Q$ varies in a fixed finite set, and may therefore be considered to
be fixed. Also note that
\begin{equation}\label{eq:dett}
\det(T)=dL^2M^2/4,
\end{equation}
and we seek estimates involving $\det(T)$. Thus, compared with
Kitaoka, the main difference is the dependency on $k$, which we must
keep track of. In particular, we modify and sharpen Kitaoka's method,
so that any implicit constants that appear depend only on $d$.

Because we think of $d$ as fixed, throughout the proof we drop the
subscript $d$ from the symbols $\ll, \gg, \asymp$. The reader should
not be misled into thinking that the implied constants are independent
of $d$.

Since the proof is rather technical, the reader is encouraged to
assume first that $d=4$, $M=1$ (so that there is a single class $c=c'$, and
moreover $Q=S_c=S_{c'}=1$) and also\footnote{\ This is the only case
  needed in Theorem~\ref{th-average-l-functions} for averaging the
  spin $L$-function; however, this is \emph{not} sufficient for
  Theorem~\ref{th:low-lying}, although the latter is also concerned
  only with the spinor $L$-function.} by~(\ref{eq-sclm}), $T$ is a simple diagonal matrix
$$
T=\begin{pmatrix}
L&0\\
0&L
\end{pmatrix}.
$$ 
\par
In principle, we now follow the formula for $a(P_{k,Q},T)$ which is
implicit in~\cite{Kit}. Given a system of representatives
$\mathfrak{h}$ of $\Gamma_1(\infty)\backslash\Gamma/\Gamma_1(\infty)$,
Kitaoka defines certain incomplete Poincar\'e series $H_k(M,Z)$ such
that
$$
P_{k,Q}(Z)=\sum_{M\in\mathfrak{h}}H_k(M,Z).
$$
\par
Denoting the $T$-th Fourier coefficient of $H_k(M,Z)$ by $h_k(M,T)$,
we have
$$
a(k;c,c',L,M)=a(P_{k,Q},T)=\sum_{M\in\mathfrak{h}}h_k(M,T).
$$
\par
We write
$$
M=\begin{pmatrix}
A&B\\
C&D
\end{pmatrix},
$$
where $A$, $B$, $C$ and $D$ are matrices in $M(2,\Z)$, and we now
divide the sum above depending on the rank of $C$. We denote the
component corresponding to rank $i$ by $R_i$, so that
$$
a(P_{k,Q},T)=R_0+R_1+R_2.
$$
\par
\textbf{Step 1 (rank $0$).} First of all, we consider
$R_0$. By~\cite[p. 160]{Kit} (or direct check), we have
$$
R_0=\displaystyle\sum_{\substack{U\in\GL(2,\Z)\\ UT\T{U}=Q}}1,
$$
which is $0$ unless $T$ is $\GL(2,\Z)$-equivalent to $Q$, in which
case it is equal to $|\Aut(T)|=|O(T,\Z)|$ (where $T$ is viewed as
defining a quadratic form and $O(T)$ is the corresponding orthogonal
group). In our case, looking at the determinant we find that $R_0=0$
unless $L=M=1$, and then it is also $0$ except if $c$ is
$\GL(2,\Z)$-equivalent to $c'$, and is then $|\Aut(c)|$. In other
words, we have
$$
R_0=|\Aut(c)| \cdot \delta(c,c';L;M),
$$
and hence, by definition, the remainder is therefore
\begin{equation}\label{eq-remainder-full}
R_1+R_2=L^{k-3/2}M^{k-2}A(k;c,c',L,M),
\end{equation}
and --- having isolated our main term --- we must now estimate the two
remaining ones.
\par
\medskip
\par
\textbf{Step 2 (rank $1$).} Following the computations in Kitaoka
(specifically, Lemma 4, p. 159, Lemma 1, p. 160, and up to line 2 on
p. 163 in~\cite{Kit}), but keeping track of the dependency on $k$ by
keeping the factor $Q^{3/4-k/2}$ (which Kitaoka considers as part of
his implied constant), we find that
\begin{equation}\label{rank 1}
|R_1|\ll_\eps\displaystyle\sum_{c,m\geq 1}
|T|^{k/2-3/4}|Q|^{3/4-k/2}A(m,T)m^{-1/2 + \eps}(m,c)^{1/2}
\Bigl|J_{k-3/2}\Bigl(4\pi\frac{\sqrt{|T||Q|}}{mc}\Bigr)\Bigr|
\end{equation}
where $A(m,T)$ is the number of times $T$, seen as a quadratic form,
represents $m$. 
\par
Now recall that $|Q|= d/4$ and $|T| = L^2M^2(d/4)$, and observe that
$A(m,T) = 0$ unless $L$ divides $m$ and $A(m,T) = A(m/L, S_c)$
whenever $L$ divides $m$. It follows that
$$
A(m,T)\ll_{\eps} (m/L)^\eps
$$
for any $\eps>0$. Using~\eqref{rank 1}, we get by a very rough
estimate that
\begin{align}
  |R_1|&\ll_{\eps} (LM)^{k-\frac{3}{2}}
  \displaystyle\sum_{\substack{c,m\geq 1\\ L|m}} m^{-1/2 + \eps }
  (m/L)^\eps (m,c)^{1/2} \Bigl|J_{k-3/2}\Bigl(\pi\frac{LMd}{mc}
  \Bigr)\Bigr|\nonumber\\
& \ll (LM)^{k-\frac{3}{2}}L^\eps\displaystyle\sum_{\substack{c,m_1\geq 1 }}
  m_1^{-1/2 +\eps} (m_1,c)^{1/2}
  \Bigl|J_{k-3/2}\Bigl(\pi\frac{Md}{m_1c}
\Bigr)\Bigr|\nonumber.
\end{align}
\par
Now we define
\begin{align*}
  \mathcal{R}_1 &= \sum_{\substack{c,m\geq 1 }} m^{-1/2 +\eps}
  (m,c)^{1/2} \Bigl|J_{k-3/2}\Bigl(\pi\frac{Md}{mc}
  \Bigr)\Bigr|\\
  &= \mathcal{R}_{11} + \mathcal{R}_{12}+ \mathcal{R}_{13},
\end{align*}
where $\mathcal{R}_{1i}$ corresponds to the sums restricted to
\begin{align*}
\begin{cases}
mc>\pi Md,&\text{ if } i=1\\
\pi Mdk^{-1/2}\leq mc \leq \pi MD,&\text{ if } i=2\\
mc\leq \pi Mdk^{-1/2}&\text{ if } i=3.
\end{cases}
\end{align*}
\par
For $i=1$, the argument of the Bessel function is $\leq 1$ and
by~(\ref{besselasymptotic1}), we find
\begin{align*}
\mathcal{R}_{11}\ll \frac{1}{\Gamma(k-3/2)}
\sum_{mc>\pi Md}{m^{-1/2+\eps}(m,c)^{1/2} \Bigl(\frac{\pi Md}{mc}\Bigr)^{k-3/2}}.
\end{align*}
\par
We can replace the exponent $k-3/2$ in the sum with any exponent
$1+\delta$, for small $0<\delta\leq 1$ (since $k\geq 6$ anyway), and
then we can remove the summation condition, observing that the double
series is then convergent, and obtain
$$
\mathcal{R}_{11}\ll M^{1+\eps}k^{-E},
$$
(taking $\delta$ small enough in terms of $\eps$) for any $E\geq 1$
and $\eps>0$, where the implied constant depends on $E$, $\eps$ and
$d$.
\par
For $i=2$, we use~(\ref{besselasymptotic1}) and find that
$$
\mathcal{R}_{12} \ll 
\frac{k^{k/2-3/4}}{\Gamma(k-3/2)}
\sum_{cm\leq \pi Md}{ 
m^{-1/2 +\eps} (m,c)^{1/2}}\\
\ll M^{1+\eps}k^{-E}
$$
for $E\geq 1$ and $\eps>0$ again (by summing over $c$ first and then
over $m$, and by Stirling's formula).
\par
Finally, using now~\eqref{besselasymptotic2}, we have
$$
  \mathcal{R}_{13} \ll_\eps k^{-1/3} \sum_{cm<\pi Mdk^{-1/2}}
  m^{-1/2 +\eps} (m,c)^{1/2} 
   \ll k^{-1/3-1/2} M^{1+\eps}=k^{-5/6}M^{1+\eps},
$$
(summing as for $\mathcal{R}_{12}$).
\par  
It follows that
$$
\mathcal{R}_1 = \mathcal{R}_{11} + \mathcal{R}_{12} + \mathcal{R}_{13}
\ll M^{1+\eps}k^{-\frac{5}{6}}
$$
for any $\eps>0$, and so the contribution of rank $1$ is bounded by
\begin{equation}\label{eq-contrib-rank1}
|R_1|\ll
  (LM)^{k-\frac{3}{2}}L^\eps M^{1+\eps}k^{-\frac{5}{6}}
\end{equation}
for any $\eps>0$, where the implied constant depends only on $d$ and
$\eps$.
\par
\medskip
\par
\textbf{Step 3 (rank $2$).} Finally, we deal with the $R_2$ term,
which is much more involved. The relevant set of matrices $M$ is
given by
$$
M\in \Bigl\{\begin{pmatrix} \star & \star \\
C&D
\end{pmatrix}\Bigr\}\subset \Sp(4,\Z)
$$
where $|C|\not=0$ and $D$ is arbitrary modulo $C$. Denoting
$$
M_2^*(\Z)=\{C\in M_2(\Z)\,\mid\, |C|\not=0\},
$$
we have then
$$
R_2 = \sum_{C\in M_2^*(\Z)}
\sum_{D\mods{C}}{h_k(M,T)}.
$$
\par
The inner sum was computed by Kitaoka~\cite[p. 165, 166]{Kit}. To
state the formula, let
$$
P=P(C):= TQ[\T{C^{-1}}]=T(\T{C^{-1}})QC^{-1},
$$ 
and let
\begin{equation}
0<s_1\leq s_2
\end{equation}
be such that $s_1^2$, $s_2^2$ are the eigenvalues of the positive
definite matrix $P$. Then Kitaoka proved that
\begin{equation}\label{e:step1}
\sum_{D\mods{C}}{h_k(M,T)}
=\frac{1}{2\pi^4}\Bigl(\frac{|T|}{|Q|}\Bigr)^{k/2-3/4}
|C|^{-3/2}K(Q,T;C)\mathcal{J}_k(P(C)),
\end{equation}
where $K(Q,T;C)$ is a type of matrix-argument Kloosterman sum
(see~\cite[\S 1, p. 150]{Kit} for the precise definition, which we do
not need here), and\footnote{We have made the change of variable
  $t=\sin(\theta)$ for convenience.}
$$
\mathcal{J}_k(P)=\int_0^{\pi/2}{
J_{k-3/2}(4\pi s_1\sin\theta)
J_{k-3/2}(4\pi s_2\sin\theta)
\sin\theta d\theta
}.
$$
\par
We note that
$$
|P|=|T||Q||C|^{-2}=(d/4)^2L^2M^2|C|^{-2}.
$$
\par
In order to exploit this formula~(\ref{e:step1}), we must handle the
sum over $C$. For this purpose, we use a parametrization of
$M_2^*(\Z)$ in terms of principal divisors: any $C\in M_2^*(\Z)$
can be written uniquely
\begin{equation}
\label{cosetreps}
C=U^{-1} 
\begin{pmatrix}
c_1&0\\
0&c_2
\end{pmatrix}
V^{-1}
\end{equation}
where
$$
1\leq c_1,\quad c_1\mid c_2,\quad U\in \GL(2,\Z)\text{ and } V \in
\SL(2,\Z)/\Gamma^0(c_2/c_1),
$$
where $\Gamma^0(n)$ denotes the congruence subgroup of $\SL(2,\Z)$
(conjugate to $\Gamma_0(n)$) consisting of matrices
$$
\begin{pmatrix}a&b\\c&d\end{pmatrix}
$$ 
with $n\mid b$. Note that there is a bijection
$$
\SL(2,\Z)/\Gamma^0(n)\simeq \Pp^1(\Z/nZ),
$$
(this is denoted $S(n)$ in~\cite{Kit}) and in particular
\begin{equation}\label{eq-psi}
|\SL(2,\Z)/\Gamma^0(n)|=n\prod_{p\mid n}{(1+p^{-1})}\ll n^{1+\eps}
\end{equation}
for any $\eps>0$.
\par
We will first consider matrices where the last three parameters
$\uple{c}=(c_1,c_2,V)$ are fixed, subject to the conditions above. The
set of such triples is denoted $\mathcal{V}$, and for each
$\uple{c}\in\mathcal{V}$, we fix (as we can) a matrix $U_1 \in
\GL(2,\Z)$ such that the matrix
$$
A(\uple{c})=A:= T\Bigl[V
\begin{pmatrix}
c_1&
\\
&c_2
\end{pmatrix}^{-1}
U_1\Bigr]
$$ 
is Minkowski-reduced. This matrix is conjugate to a diagonal matrix
$H=H(\uple{c})$ of the form
$$
H= \begin{pmatrix}a&\\&c\end{pmatrix}
$$
with $a \le c$. Computing determinants and using the fact that $A$ is
Minkowski-reduced, we note also that we have
\begin{equation}\label{eqacs1s2}
(d/4) \frac{L^2M^2}{c_1^2c_2^2} = ac \asymp s_1^2s_2^2 =(d/4)^2 \frac{L^2M^2}{c_1^2c_2^2}
\end{equation}
(we recall again that $d$ is assumed to be fixed).
\par
For fixed $\uple{c}\in \mathcal{V}$, the set of matrices $C\in
M_2^*(\Z)$ corresponding to $\uple{c}$ can be parameterized in the
form
$$
C = U^{-1}U_1^{-1}\begin{pmatrix}
c_1&\\&c_2
\end{pmatrix}
V^{-1},
$$ 
where $U$ varies freely over $\GL(2,\Z)$ (this is a simple change of
variable of the last parameter $U\in \GL(2,\Z)$
in~\eqref{cosetreps}). As shown in~\cite[p. 167]{Kit}, for any such
$C$ associated to $\uple{c}$, we have also
$$
|P| \asymp |A|,\quad \Tr(P) \asymp \Tr(A[U])=\Tr(H[U]).
$$
\par
We can now start estimating. First, for a given $C$ parametrized by
$(U,\uple{c})$, Kitaoka proved (see~\cite[Prop. 1]{Kit}) that the
Kloosterman sum satisfies
$$
|K(Q,T;C)| \ll c_1^2 c_2^{1/2 + \eps}(c_2 , T[v])^{1/2},
$$ 
for any $\eps>0$, where $v$ is the second column of $V$ and the
implied constant depends only on $\eps$. Hence by~(\ref{e:step1}), we
obtain
$$
\sum_{D\mods{C}}h_k(M,T)
\ll (LM)^{k-3/2} c_1^{1/2} c_2^{-1 +\eps}(c_2 , T[v])^{1/2}
|\mathcal{J}_k(P(C))|.
$$
\par
In order to handle the Bessel integral $\mathcal{J}_k(P(C))$, we will
partition $M_2^*(\Z)$ in three sets $\mathcal{C}_1$, $\mathcal{C}_2$,
$\mathcal{C}_3$, according to the relative sizes of the values $s_1$
and $s_2$ for the corresponding invariants $\uple{c}$. These can be
determined from the size of $\Tr(P)$ and $|P|$; precisely, we let
\begin{gather*}
\mathcal{C}_1=\{C\,\mid\, \Tr(P)<1\},\\
\mathcal{C}_2=\{C\,\mid\, \Tr(P)\geq \max(2|P|,1)\},\\
\mathcal{C}_3=\{C\,\mid\, 1\leq \Tr(P)<2|P|\},
\end{gather*}
and we further denote by $\mathcal{C}_i(\uple{c})$ the subsets of
$\mathcal{C}_i$ where $C$ is associated with the invariants
$\uple{c}=(c_1,c_2,V)$. The following lemma gives the rough size of
these sets, or a weighted version that is needed below:

\begin{lemma}
With notation as above, for any $\uple{c}=(c_1,c_2,V)$, we have
\begin{gather}
|\mathcal{C}_1(\uple{c})|\ll
(ac)^{-1/2-\eps},
\label{eq-size1}
\\
\sum_{C\in\mathcal{C}_2(\uple{c})}{
|A|^{1+\delta}(\Tr(A[U]))^{-5/4-\delta}}
\ll
\begin{cases}
(ac)^{1/2 +\delta - \eps}&\text{ if } ac<1\\
(ac)^{1/4 + \eps}&\text{ if } ac\geq 1,
\end{cases}
\label{eq-size2}
\\
|\mathcal{C}_3(\uple{c})|\ll
(ac)^{1/2+\eps},
\label{eq-size3}
\end{gather}
for any $\eps>0$ and $\delta> 0$ in the second, where the implied
constants depend on $\delta$ and $\eps$.
\end{lemma}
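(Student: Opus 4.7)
The plan is to translate each count into a lattice-geometric statement about ordered bases of the planar lattice $H^{1/2}\Z^2 \subset \R^2$ of covolume $\sqrt{ac}$. Under the bijection $U \leftrightarrow (v_1, v_2)$ where $v_j = H^{1/2} U_j$ denotes the image of the $j$-th column of $U$, elements of $\GL(2,\Z)$ correspond precisely to ordered bases of this lattice. Since $A$ is Minkowski-reduced with eigenvalues $a \le c$, the Frobenius norm identity gives $\Tr(A[U]) \asymp \|v_1\|^2 + \|v_2\|^2$, while the basis condition $|\det(v_1, v_2)| = \sqrt{ac}$ forces $\|v_1\|\|v_2\| \ge \sqrt{ac}$. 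The main counting input is the estimate that the number of ordered bases with $\|v_1\|, \|v_2\| \le R$ is $\ll_{\eps} R^{2+\eps}/\sqrt{ac}$; this follows by first counting primitive $v_1$ in the lattice (giving $\ll R^2/\sqrt{ac}$ by a standard lattice-point estimate) and then, for each such $v_1$, counting completions $v_2$ on two affine lines parallel to $v_1$ at distance $\sqrt{ac}/\|v_1\|$ from the origin with integer spacing $\|v_1\|$ (giving $\ll R/\|v_1\| + 1$ completions of norm $\le R$).

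For \eqref{eq-size1}, the hypothesis $\Tr(P) < 1$ forces $\|v_j\| = O(1)$, so applying the basis estimate with $R = O(1)$ yields $|\mathcal{C}_1(\uple{c})| \ll_{\eps} (ac)^{-1/2-\eps}$ directly.

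For \eqref{eq-size3}, the hypothesis $1 \le \Tr(P) < 2|P| \asymp 2ac$ gives $\|v_j\| = O(\sqrt{ac})$, so the basis estimate with $R = O(\sqrt{ac})$ yields $|\mathcal{C}_3(\uple{c})| \ll_{\eps} (ac)^{1/2+\eps}$.

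For \eqref{eq-size2}, the condition $\Tr(P) \ge \max(2|P|, 1)$ gives only a lower bound $\Tr(A[U]) \gg X_0 := \max(2ac, 1)$, so I treat this as a weighted sum decomposed dyadically in $X = \Tr(A[U])$. Each dyadic shell $X \le \Tr(A[U]) < 2X$ contains $\asymp X/\sqrt{ac}$ matrices (by the basis estimate with $R \asymp \sqrt{X}$), and each contributes weight $(ac)^{1+\delta} X^{-5/4-\delta}$, so the total contribution per shell is $\asymp (ac)^{1/2+\delta} X^{-1/4-\delta}$. The geometric sum over $X \ge X_0$ is dominated by $X = X_0$, producing $(ac)^{1/2+\delta}$ when $X_0 = 1$ (the case $ac < 1$) and $(ac)^{1/4}$ when $X_0 \asymp ac$ (the case $ac \ge 1$). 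The main obstacle is establishing the basis-counting estimate itself with the correct $\eps$-loss; the crucial feature is that the $\det U = \pm 1$ constraint saves exactly a factor of $\sqrt{ac}$ over the naive lattice point count in $\Z^4$ inside the ellipsoid of volume $R^4/(ac)$, which is precisely what makes the three claimed bounds sharp.
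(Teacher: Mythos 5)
Your proposal is correct but takes a genuinely different route: the paper does not prove this lemma from scratch at all, it simply cites Kitaoka's lattice computations on pp.~167--169 of~\cite{Kit}, remarking only which side conditions ($a\ll 1$ in the first case, $c\gg 1$ in the third) are in force and how to re-index the exponent $\delta$. You instead re-derive the three bounds from a single self-contained geometric principle --- counting ordered bases of a planar lattice of covolume $\sqrt{ac}$ and reduced diagonal $\asymp(a,c)$ in norm-balls --- which is transparent and, in my view, clarifies why the exponent saving of $\sqrt{ac}$ appears uniformly. I would only tighten two points of bookkeeping in your write-up of the key counting lemma. First, the step ``$\sum_{v_1}(R/\|v_1\|+1)\ll R^{2+\eps}/\sqrt{ac}$'' is left implicit and, done carelessly via the crude count $\#\{v_1:\|v_1\|\leq T\}\ll(1+T/\sqrt a)(1+T/\sqrt c)$, yields an unwanted extra term $R\log(c/a)/\sqrt a$ that is not in general $\ll R^{2+\eps}/\sqrt{ac}$. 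The fix is the standard but crucial observation that a \emph{primitive} lattice vector of norm strictly below the second minimum $\lambda_2\asymp\sqrt c$ can only be $\pm w_1$ (the minimal vector), so the primitive count for $T<\lambda_2$ is $O(1)$ rather than $\asymp T/\sqrt a$; with this, one actually gets the cleaner bound $\#\{U:\Tr(A[U])\leq R^2\}\ll R^2/\sqrt{ac}$ with no $\eps$ at all (subject to the constraint $R\gg\sqrt c$, which is automatic since any basis contains a vector of norm $\geq\lambda_2$). Second, in \eqref{eq-size2} your dyadic cutoff should be $X_0\gg\max(c,ac,1)$ rather than just $\max(2ac,1)$: the constraint $\Tr(A[U])\gg c$ is automatic and, for instance in the regime $ac<1$, $c>1$, it is precisely what makes your geometric sum small enough; with $X_0=1$ you are merely being wasteful, which happens to still land within the claimed $(ac)^{1/2+\delta-\eps}$ because the exponent there is deliberately generous, but it is cleaner to insert the correct $X_0$. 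With these two adjustments the argument is complete and in fact recovers the lemma without the $\eps$-losses in \eqref{eq-size1}, \eqref{eq-size3}.
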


\begin{proof}
All these are proved by Kitaoka. Precisely:
\begin{itemize}
\item The bound~(\ref{eq-size1}) comes from~\cite[p. 167]{Kit}, using
  the fact that in that case we have $a\ll 1$;
\item The bound~(\ref{eq-size2}) comes from the arguments
  of~\cite[p. 168, 169]{Kit} (note that in that case the summation set
  is infinite); to be more precise, Kitaoka argues with what amounts
  to taking
$$
\delta=k/2-7/4,\text{ so that } k/2-3/4=1+\delta,\quad 
(1-k)/2=-5/4-\delta,
$$
but the only information required (up to~\cite[p. 169, line 10]{Kit})
is the sign and the value of the sum of the two exponents
$$
k/2-3/4+(1-k)/2=-1/4=1+\delta + (-5/4-\delta)
$$
(this is used in~\cite[p. 168, line -12]{Kit}). Hence Kitaoka's
argument applies for $\delta>0$.
\item The bound~(\ref{eq-size3}) comes from~\cite[p. 168]{Kit}, using
  the fact that in that case we have $c\gg 1$.
\end{itemize}
\end{proof}

As shown also by Kitaoka, we have the following crucial localization
properties (see~\cite[p. 166]{Kit}):
\begin{enumerate}
\item If $C\in\mathcal{C}_1$, then $s_1 \le 1$ and $s_2\le 1$;
\item If $C\in\mathcal{C}_2$, then $s_1 \le 1$ and $s_2 \gg 1$, with
  absolute implied constant;
\item If $C\in \mathcal{C}_3$, then $s_1 \gg 1$ and $s_2 \gg 1$, with
  absolute implied constants.
\end{enumerate}
\par
Now, by breaking up the sum over $C$ in $R_2$ according to the three
subsets $\mathcal{C}_i$, we can write
$$
|R_2| \ll  R_{21} + R_{22} + R_{23}
$$
where, for $i=1$, $2$, $3$, and any fixed $\eps>0$, we have
$$
R_{2i} \ll
(LM)^{k-\frac{3}{2}}\sum_{\uple{c}\in\mathcal{V}}c_1^{\frac{1}{2}}
c_2^{-1 + \eps} (c_2,T[v])^{1/2} \mathcal{R}_{2i}(\uple{c}),
$$ 
for any $\eps>0$ with
$$
\mathcal{R}_{2i}(\uple{c})=
\sum_{C\in\mathcal{C}_i(\uple{c})}{|\mathcal{J}_k(P(C))|},
$$
the implied constant depending only on $d$ and $\eps$.
\par
Accordingly, we study each of $R_{21}$, $R_{22}$, $R_{23}$ separately.
\par
\medskip
\par
\textbf{-- Estimation of $R_{21}$.}  Since we have $s_1 \le 1, s_2 \le
1$, we use~\eqref{besselasymptotic1}; using the superexponential
growth of the Gamma function, we obtain easily
$$
|\mathcal{J}_k(P(C)) | \ll_\eps \frac{(s_1s_2)^{2+\delta}}{2^k}
$$
for $C\in\mathcal{C}_1$ and any fixed $\delta>0$. On the other hand,
by~(\ref{eq-size1}), we have
$$
|\mathcal{C}_1(\uple{c})|\ll (ac)^{-\frac{1}{2} - \epsilon} \ll
(s_1s_2)^{-1-2\eps},
$$
for any $\epsilon>0$, and taking it small enough we obtain
\begin{equation}\label{proto}
  \mathcal{R}_{21}(\uple{c}) \ll
  \frac{(s_1s_2)^{1+\delta}}{2^k} \ll  
  (LM)^{1+\delta}\frac{(c_1c_2)^{-1-\delta}}{2^k},
\end{equation}
for any fixed $\delta>0$. For fixed $c_1$, $c_2$ first, we have
\begin{align*}
  \sum_{V\in \SL(2,\Z)/\Gamma^0(c_2/c_1)}c_1^{\frac{1}{2}} c_2^{-1 +
    \eps}(c_2 , T[v])^{1/2} \mathcal{R}_{21}(\uple{c}) 
  & \ll \frac{(LM)^{1 +\delta}}{2^k} \sum_{V}c_1^{-\frac{1}{2} -
    \delta}
  c_2^{-2 - \delta +\eps}(c_2 , T[v])^{1/2}
\end{align*}
from which one deduces easily
$$
 \sum_{V\in \SL(2,\Z)/\Gamma^0(c_2/c_1)}c_1^{\frac{1}{2}} c_2^{-1 +
    \eps}(c_2 , T[v])^{1/2} \mathcal{R}_{21}(\uple{c}) 
\ll \frac{(LM)^{1 + \delta}}{2^k}(c_1c_2)^{-1 - \delta+\eps}
  (c_2/c_1 , LM^2)^{1/2}
$$
for any $\delta>0$, possibly different than before
(using~(\ref{eq-psi}) and~\cite[Prop. 2]{Kit} to handle the gcd; the
exponent of $c_1$ was worsened by $1/2$ to facilitate the use of this
lemma).
\par
Writing $c_2 = nc_1$, with $n\geq 1$, we can finally sum over $c_1$
and $n$; the resulting series converge for $\delta>0$ and we obtain
$$
R_{21} \ll 2^{-k}(LM)^{k-\frac{3}{2}+1+\delta}
\sum_{c_1,n\geq 1} c_1^{-2}n^{-1-\delta+\eps}(n , LM^2)^{1/2},
$$
and therefore by taking, e.g., $\delta=2\eps$ (and changing notation),
we derive
\begin{equation}\label{eq-contrib-r21}
R_{21}\ll (LM)^{k-\frac{3}{2}} (LM)^{1 + \eps} k^{-E}
\end{equation}
for any $\eps>0$ and $E\geq 1$, where the implied constant depends on
$d$, $E$ and $\eps$.
\par
\medskip
\par
\textbf{-- Estimation of $R_{22}$.} We treat the $R_{22}$ term next.
Using~\eqref{besselasymptotic1} for the Bessel function involving
$s_1$ and~(\ref{besselasymptotic3}) for the one involving $s_2$, and
using the fact that
$$
\Tr(A[U])\asymp \Tr(P)=s_1^2+s_2^2\asymp s_2^2
$$
for this term, it is easy to check that
$$
|\mathcal{J}_k(P(C)) | \ll \frac{1}{\Gamma(k-3/2)}
|A|^{\frac{k}{2} - \frac{3}{4}}
(\Tr A[U])^{\frac{1-k}{2}}.
$$
\par
If we write
$$
|A|^{\frac{k}{2} - \frac{3}{4}}
(\Tr A[U])^{\frac{1-k}{2}}
=
|A|^{1+\delta}(\Tr A[U])^{-5/4-\delta}
\Bigl(\frac{|A|}{\Tr(A[U])}\Bigr)^{k/2-7/4-\delta}
$$
for any fixed $\delta>0$, and observe that
$$
\frac{|A|}{\Tr(A[U])}\asymp \frac{ac}{s_2^2}\asymp s_1^2\ll 1,
$$
it follows using the super-exponential growth of the Gamma function
that
$$
\mathcal{R}_{22}(\uple{c})
\ll 
2^{-k}\sum_{C\in\mathcal{C}_2(\uple{c})} |A|^{1+\delta} (\Tr
A[U])^{-\frac{5}{4}-\delta}
$$
for any fixed $\delta>0$. By~(\ref{eq-size2}), we have
$$
\mathcal{R}_{22}(\uple{c})
\ll 2^{-k}\times 
\begin{cases}(ac)^{\frac{1}{2}+ \delta- \epsilon} & \text{ if } {ac <1},
  \\ (ac)^{ \frac{1}{4} + \epsilon} & \text{ if } {ac \ge 1},
\end{cases}
$$
for any $\epsilon>0$ and $\delta>0$.
\par
We take $\epsilon=\delta/2$ and using~(\ref{eqacs1s2}), we deduce that
\begin{multline*}
  R_{22} \ll \frac{(LM)^{k - \frac{3}{2}}}{2^k}\bigg
  (\sum_{c_1c_2>d^{1/2}LM/4}
  \left(\frac{LM}{c_1c_2}\right)^{1+\delta} \sum_{V\in
    \SL(2,\Z)/\Gamma^{0}(c_2/c_1)}c_1^{\frac{1}{2}} c_2^{-1 +\eps}(c_2
  , T[v])^{1/2} \\
  + \sum_{c_1c_2\leq d^{1/2}LM/4}
  \left(\frac{d^{1/2}LM}{2c_1c_2}\right)^{\frac{1}{2}+\delta} \sum_{V\in
    \SL(2,\Z)/\Gamma^{0}(c_2/c_1)}c_1^{\frac{1}{2}} c_2^{-1 +\eps}
(c_2 , T[v])^{1/2} \bigg)
\end{multline*}
for any $\delta>0$. In the second sum, we can write trivially
$$
\left(\frac{d^{1/2}LM}{2c_1c_2}\right)^{\frac{1}{2}+\delta} \le
\left(\frac{d^{1/2}LM}{2c_1c_2}\right)^{1+\delta}
\ll
\left(\frac{LM}{c_1c_2}\right)^{1+\delta},
$$
so we end up with
$$
R_{22} \ll \frac{(LM)^{k - \frac{3}{2}}}{2^k}
\sum_{\substack{c_1|c_2}} \left(\frac{LM}{c_1c_2}\right)^{1+\delta}
\sum_{V\in \SL(2,\Z)/\Gamma^{0}(c_2/c_1)}c_1^{\frac{1}{2}} c_2^{-1 +
  \eps}(c_2 , T[v])^{1/2}
$$
and now, using the same type of arguments leading from~\eqref{proto}
to~(\ref{eq-contrib-r21}), we see that
\begin{equation}\label{eq-contrib-r22}
R_{22} \ll (LM)^{k - 3/2}(LM)^{1+\eps}k^{-E}
\end{equation}
for any $\eps>0$ and $E\geq 1$, the implied constant dependind on $d$,
$E$ and $\eps$.
\par
\medskip
\par
\textbf{-- Estimation of $R_{23}$.}  Recall that we have $1 \ll s_1
\le s_2$ for $C\in\mathcal{C}_3$. We estimate the Bessel integral
using
$$
|\mathcal{J}_k(P)|
\leq \Bigl(\int_{M_1}+
\int_{M_2}\Bigr)
\Bigl|
J_{k-3/2}(4\pi s_1\sin\theta)
J_{k-3/2}(4\pi s_2\sin\theta)
\sin\theta\Bigr| d\theta
$$
where 
\begin{gather*}
  M_1 = \{\theta\in [0,\pi/2]\,\mid\, 4\pi s_1\sin\theta \le 1 \},
  \\
  M_2 = \{ \theta\in [0,\pi/2]\,\mid\, 1 \le 4\pi s_1\sin\theta , \ 1
  \le 4\pi s_2\sin\theta \}.
\end{gather*}
\par
In the first we use~\eqref{besselasymptotic1} and the
super-exponential growth of the Gamma function to write
$$
J_{k-3/2}(4\pi s_1\sin\theta) \ll 2^{-k}s_1^\delta, \quad
J_{k-3/2}(4\pi s_2\sin\theta) \ll 1\ll s_2^\delta,
$$
for any $\delta>0$, and in the second we use the
estimate~(\ref{besselasymptotic2}) to get
$$
J_{k-3/2}(4\pi s_1\sin\theta) J_{k-3/2}(4\pi s_2\sin\theta)\ll
k^{-2/3},
$$
so that
$$
|\mathcal{J}_k(P(C)) | \ll k^{-2/3}+2^{-k}(s_1s_2)^{\delta}\ll
k^{-2/3}(s_1s_2)^{\delta}
$$ 
for any $\delta>0$. It follows that
$$
\mathcal{R}_{23}(\uple{c}) \ll
k^{-2/3}\sum_{C\in\mathcal{C}_3(\uple{c})}(s_1s_2)^\delta,
$$ 
which, by~(\ref{eq-size3}) with, e.g., $\eps=\delta$, gives
$$
\mathcal{R}_{23}(\uple{c}) \ll
k^{-2/3}(LM)^{1+\delta}(c_1c_2)^{-1-\delta}
$$
for any $\delta>0$. Then the same argument as that
following~\eqref{proto} is used to sum over the parameters $\uple{c}$,
and to deduce
\begin{equation}\label{eq-contrib-r23}
R_{23} \ll (LM)^{k - 3/2}(LM)^{1 + \eps}k^{-E}
\end{equation}
for any $\eps>0$ and $E\geq 1$, the implied constant depending on $d$,
$E$ and $\eps$-
\par
Summarizing, we have
$$
(LM)^{k-3/2}A(k;c,c',L,M)=R_1+R_2\ll R_1+R_{21}+R_{22}+R_{23},
$$
and putting together the estimates~(\ref{eq-contrib-rank1}),
(\ref{eq-contrib-r21}), (\ref{eq-contrib-r22}), and
(\ref{eq-contrib-r23}), we find that we have proved the estimate
$$
A(k;c,c',L,M)\ll L^{1 + \eps}M^{3/2 + \eps}k^{-2/3}
$$
for $\eps>0$, which was our goal.
\end{proof}

\begin{remark}
  For later investigations, it may be worth pointing out that the
  limitation on the error term, as a function on $k$, arises only from
  the contributions $R_1$ and (the second part of) $R_{23}$. All other
  terms decay faster than any polynomial in $k$ as $k\ra +\infty$.
\end{remark}

\subsection{A quasi-orthogonality relation for Siegel modular forms}
\label{s:petersson}

We now put together the results of the previous sections. For every
$k\geq 1$, we fix a Hecke basis $\cusp_k^*$ of $\cusp_k$. Fix the data
$(d,\Lambda)$ as in Section~\ref{sec:statement} and let
$\weight_{k,d,\Lambda}^F$ be as defined there; accordingly we have
measures $\nu_{sop,k}$ defined for every finite set of primes $\sop$ and
weight $k\geq 1$ using suitable average over $F\in \cusp_k^*$.
\par
Our main result in this section is:

\begin{proposition}\label{pr:petersson}
  Let $\sop$ be a finite set of primes, and $l=(l_p)$, $m=(m_p)$ be
  $\sop$-tuples of non-negative integers. Put
$$
L=\prod_{p\in \sop}{p^{l_p}},\quad\quad M=\prod_{p\in \sop}{p^{m_p}}.
$$
\par
Then we have
$$
\int_{X_{\sop}}{\prod_{p\in \sop}{ U^{l_p,m_p}_p(x_p)}d\nu_{\sop,k}}=
\sum_{F \in \cusp_{k}^\ast} \weight^F_{k,d,\Lambda} \prod_{p\in \sop}
U^{l_p,m_p}_{p}(a_{p}(F),b_{p}(F)) \longrightarrow \delta(l;m)
$$
as $k \rightarrow \infty$ over the \emph{even} integers, where
$$
\delta(l;m)=\begin{cases}
1&\text{ if } L=M=1,\text{ i.e. all $l_p$ and $m_p$ are $0$},\\
0&\text{ otherwise}.
\end{cases}
$$
\par
More precisely, for any even $k$ we have
\begin{equation}\label{eq:ortho-quant}
\sum_{F \in \cusp_{k}^\ast} \weight^F_{k,d,\Lambda} \prod_{p\in \sop}
U^{l_p,m_p}_{p}(a_{p}(F),b_{p}(F))=
\delta(l;m)+O\left(\frac{L^{1 +  \eps}M^{3/2 +  \eps}}{k^{\frac{2}{3} }}\right),
\end{equation}
for any $\eps>0$, where the implied constant depends only on $d$ and
$\eps$.
\end{proposition}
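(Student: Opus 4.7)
The plan is to combine the three ingredients listed in the remark after Proposition~\ref{classicalbesselprop}. I will use the key relation (Theorem~\ref{t:relation}) to rewrite the product of Sugano polynomials as a $\Lambda$-twisted sum of Fourier coefficients, then invoke the Poincar\'e series reproducing formula~\eqref{eq:poincare-basic} to convert the sum over $F \in \cusp_k^*$ into a sum of Fourier coefficients of Poincar\'e series, and finally apply the quantitative asymptotic orthogonality of Proposition~\ref{proppetersson2}.

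First, by Theorem~\ref{t:relation}, for every $F \in \cusp_{k}^*$ we have
$$
a(d,\Lambda;F)\,\prod_{p\in \sop}U^{l_p,m_p}_p(a_p(F),b_p(F))
= \frac{|\Cl_d|}{L^{k-3/2}M^{k-2}\,|\Cl_d(M)|}\sum_{c'\in \Cl_d(M)}\overline{\Lambda(c')}\,a(F,S_{c'}^{L,M}).
$$
Multiplying by $\overline{a(d,\Lambda;F)} = \sum_{c\in\Cl_d}\Lambda(c)\overline{a(F,S_c)}$ and dividing by $\langle F,F\rangle$, the left-hand side becomes $\omega^F_{k,d,\Lambda}\,\prod_p U^{l_p,m_p}_p/(c_{k,d} d_\Lambda)$. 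Summing over $F\in\cusp_k^*$ and applying the Petersson-type identity
$$
\sum_{F\in\cusp_k^*}\frac{\overline{a(F,T_1)}\,a(F,T_2)}{\langle F,F\rangle}
=\frac{(\det T_1)^{k-3/2}}{8c_k}\,a(P_{k,T_1},T_2),
$$
(which follows from~\eqref{eq:poincare-basic} by expanding $P_{k,T_1}$ in the Hecke basis), I obtain, after noting $\det S_c=d/4$ and simplifying using the definition of $c_{k,d}$,
\begin{equation}\label{eq:reduction}
\sum_{F\in\cusp_k^*}\omega^F_{k,d,\Lambda}\prod_{p\in\sop}U^{l_p,m_p}_p(a_p(F),b_p(F))
=\frac{d_\Lambda}{2w(-d)\,|\Cl_d(M)|}\sum_{\substack{c\in\Cl_d\\ c'\in\Cl_d(M)}}\Lambda(c)\,\overline{\Lambda(c')}\,a(k;c,c',L,M).
\end{equation}

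Now I substitute the decomposition from Proposition~\ref{proppetersson2}, namely $a(k;c,c',L,M)=|\Aut(c)|\,\delta(c,c';L;M)+L^{k-3/2}M^{k-2}A(k;c,c',L,M)$.

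For the error term, the factor $L^{k-3/2}M^{k-2}$ is absorbed by the corresponding division in~\eqref{eq:reduction} when we trace through carefully; wait, looking again, the prefactor in~\eqref{eq:reduction} does not contain $L^{k-3/2}M^{k-2}$, so the contribution of $A$ is already bounded. Using $|A(k;c,c',L,M)|\ll_{d,\eps} L^{1+\eps}M^{3/2+\eps}k^{-2/3}$ and estimating trivially the character factors and the double sum (noting $|\Cl_d|=O_d(1)$ and that the sum over $c'$ contains $|\Cl_d(M)|$ terms which cancels with the prefactor), we get the error term in~\eqref{eq:ortho-quant}.

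For the main term, when $(L,M)\ne(1,1)$ we have $\delta(c,c';L;M)=0$ by Proposition~\ref{proppetersson}, so nothing contributes. When $L=M=1$, the condition $\delta(c,c';1;1)=1$ forces $c$ and $c'$ to be $\GL(2,\Z)$-equivalent (note $\Cl_d(1)=\Cl_d$), and we are left with
$$
\frac{d_\Lambda}{2w(-d)\,h(-d)}\sum_{c\in\Cl_d}|\Aut(c)|\,\Lambda(c)\sum_{\substack{c'\in\Cl_d\\ c'\sim_{\GL}c}}\overline{\Lambda(c')}.
$$
The inner sum equals $\overline{\Lambda(c)}+\overline{\Lambda(\bar c)}$ if $c\ne\bar c$ (with $\bar c=c^{-1}$ in $\Cl_d$) and $\overline{\Lambda(c)}$ if $c=\bar c$. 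I will check this evaluates to $1$ by splitting into two cases: using $|\Aut(c)|=2w(-d)$ when $c^2=1$ and $|\Aut(c)|=w(-d)$ otherwise, and using the character sum identity $\sum_{c\in\Cl_d}\Lambda(c)^2=0$ when $\Lambda^2\ne 1$, which exactly compensates for the factor $d_\Lambda=2$. This will recover $d_\Lambda=1$ uniformly once the sums are carried out.

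The step I expect to require the most care is the verification of the main term, because it mixes the $\SL(2,\Z)$- versus $\GL(2,\Z)$-equivalence of matrices with the behavior of $\Lambda$ under $c\mapsto\bar c$, and the normalization $c_{k,d}$ together with $d_\Lambda$ is fine-tuned precisely so that the various constants collapse to $1$. Once this bookkeeping is done, the remaining arguments are straightforward applications of the three cited inputs.
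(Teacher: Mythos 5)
Your outline matches the paper's proof exactly: expand $|a(d,\Lambda;F)|^2$, apply Theorem~\ref{t:relation} to one factor, reduce to a bilinear average of Fourier coefficients, convert via~\eqref{eq:poincare-basic} into Fourier coefficients of Poincar\'e series, insert Proposition~\ref{proppetersson2}, and evaluate the main term through (in effect) Lemma~\ref{autcsum}. Those are the same three ingredients the paper assembles, in the same order.

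However, there is an algebraic slip in your equation~\eqref{eq:reduction}, and your subsequent inline self-correction lands on the wrong conclusion. Tracing the constants through carefully, the correct identity is
$$
\sum_{F\in\cusp_k^*}\omega^F_{k,d,\Lambda}\prod_{p\in\sop}U^{l_p,m_p}_p(a_p(F),b_p(F))
=\frac{d_\Lambda\, L^{3/2-k}M^{2-k}}{2w(-d)\,|\Cl_d(M)|}\sum_{\substack{c\in\Cl_d\\ c'\in\Cl_d(M)}}\Lambda(c)\,\overline{\Lambda(c')}\,a(k;c,c',L,M),
$$
i.e.\ there must be a prefactor $L^{3/2-k}M^{2-k}$. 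Your version is missing it. The source of the discrepancy is the Petersson identity you apply with $T_1=S_c$, $T_2=S_{c'}^{L,M}$: this brings out $(\det S_c)^{k-3/2}=(d/4)^{k-3/2}$, which cancels the $(d/4)^{3/2-k}$ in $c_{k,d}$, but it does \emph{not} cancel the factor $L^{3/2-k}M^{2-k}$ that came out of Theorem~\ref{t:relation}. That factor is essential: when you substitute
$$
a(k;c,c',L,M)=|\Aut(c)|\,\delta(c,c';L;M)+L^{k-3/2}M^{k-2}A(k;c,c',L,M)
$$
from Proposition~\ref{proppetersson2}, the $L^{k-3/2}M^{k-2}$ in the second term cancels against the prefactor, leaving an error contribution of size $|A(k;c,c',L,M)|\ll L^{1+\eps}M^{3/2+\eps}k^{-2/3}$; without the prefactor (as your~\eqref{eq:reduction} is written) the error would be of size $L^{k-1/2+\eps}M^{k-1/2+\eps}k^{-2/3}$, which is useless. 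Your comment ``the prefactor in~\eqref{eq:reduction} does not contain $L^{k-3/2}M^{k-2}$, so the contribution of $A$ is already bounded'' is therefore not a correct resolution; the right fix is to restore the missing $L^{3/2-k}M^{2-k}$ in~\eqref{eq:reduction}, after which your estimate of the error term and the main-term computation (which only sees the case $L=M=1$, where the prefactor is $1$) go through as in the paper. The main-term verification you sketch is correct and is precisely Lemma~\ref{autcsum}; writing it out would remove the slightly opaque last phrase ``This will recover $d_\Lambda=1$ uniformly.''
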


We will first prove a lemma which is easy, but where the distinction
between $\SL(2,\Z)$ and  $\GL(2,\Z)$-equivalence of quadratic forms is
important. 

\begin{lemma}\label{autcsum}
For $c$, $c'\in \Cl_d$, put 
$$
\delta(c,c')= \begin{cases}
  1 &\text{ if } $c$ \text{ is $\GL(2,\Z)$-equivalent to $c'$},\\
    0 & \text{ otherwise. }
\end{cases}
$$ 
\par
Then we have
$$
\sum_{c,c'\in\Cl_d} 
\Lambda(c)\overline{\Lambda(c')}
\delta(c,c')|\Aut(c)| 
= \frac{2h(-d)w(-d)}{d_\Lambda}
$$
where
$$
d_\Lambda =
\begin{cases} 1 & \text{ if } \Lambda^2 =1,\\
  2 & \text{ otherwise. }
\end{cases}
$$
\end{lemma}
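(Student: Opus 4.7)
The plan is to use the standard description of $\GL(2,\Z)$-equivalence of primitive positive-definite binary quadratic forms of discriminant $-d$ in terms of the class group $\Cl_d$: under the bijection between $\SL(2,\Z)$-classes and $\Cl_d$, the action of the nontrivial coset in $\GL(2,\Z)/\SL(2,\Z)$ corresponds to inversion $c\mapsto c^{-1}$ in $\Cl_d$ (one sees this concretely by conjugating $T=\begin{pmatrix}a&b/2\\ b/2&c\end{pmatrix}$ by $\mathrm{diag}(1,-1)$, which flips the sign of $b$ and represents the opposite ideal class). Consequently $\delta(c,c')=1$ iff $c'\in\{c,c^{-1}\}$, and the sum collapses to
\[
\sum_{c\in\Cl_d}|\Aut(c)|\bigl(1+\charfun_{c^2\ne 1}\Lambda(c)^2\bigr),
\]
where the two terms inside the parenthesis come from $c'=c$ and $c'=c^{-1}\ne c$ respectively (and the indicator ensures we do not double-count ambiguous classes).

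Next I would compute $|\Aut(c)|$. For $T$ representing $c$, the $\SL(2,\Z)$-stabilizer of $T$ is isomorphic to the unit group of the order in $\Q(\sqrt{-d})$, so it has order $w(-d)$. If $c^2\ne 1$, this exhausts $\Aut(c)$. If $c^2=1$, there is an additional stabilizing element of determinant $-1$ (the class is ambiguous, meaning $T$ is $\GL(2,\Z)$-equivalent to its opposite), so $|\Aut(c)|=2w(-d)$. Let $A=\{c\in\Cl_d:c^2=1\}$. Splitting the sum according to whether $c\in A$ or not, we get
\[
w(-d)\Bigl(2|A|+\sum_{c\notin A}\bigl(1+\Lambda(c)^2\bigr)\Bigr)
=w(-d)\Bigl(|A|+h(-d)+\sum_{c\notin A}\Lambda(c)^2\Bigr).
\]

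The final step is to evaluate $\sum_{c\notin A}\Lambda(c)^2$. Since $\Lambda(c)^2=\Lambda(c^2)=1$ whenever $c\in A$, and since $\Lambda^2$ is a character of $\Cl_d$, orthogonality of characters gives
\[
\sum_{c\in\Cl_d}\Lambda(c)^2=\begin{cases}h(-d)&\text{if }\Lambda^2=1,\\ 0&\text{otherwise,}\end{cases}
\qquad\text{hence}\qquad
\sum_{c\notin A}\Lambda(c)^2=\begin{cases}h(-d)-|A|&\Lambda^2=1,\\ -|A|&\Lambda^2\ne 1.\end{cases}
\]
Substituting gives $2w(-d)h(-d)$ in the first case and $w(-d)h(-d)$ in the second, which matches $2h(-d)w(-d)/d_\Lambda$ in both situations.

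There is no real obstacle here; the only point that requires genuine care, and the one the authors are flagging by singling out this lemma, is keeping the distinction between $\SL(2,\Z)$-equivalence (which defines $\Cl_d$ and the Fourier coefficients) and $\GL(2,\Z)$-equivalence (which governs $\delta$ and the size of $\Aut$) completely straight; once one identifies the $\GL(2,\Z)/\SL(2,\Z)$ action with inversion in $\Cl_d$ and correctly accounts for ambiguous classes in $|\Aut(c)|$, everything reduces to orthogonality of the character $\Lambda^2$.
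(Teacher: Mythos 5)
Your proof is correct and follows essentially the same route as the paper's: both arguments reduce the double sum to a single sum over $c\in\Cl_d$ using $\delta(c,c')\ne0$ iff $c'\in\{c,c^{-1}\}$, split according to whether $c$ is $2$-torsion (with $|\Aut(c)|=2w(-d)$ or $w(-d)$ accordingly), and finish with orthogonality of the character $\Lambda^2$. The paper's version is marginally slicker in that it writes $2=1+\Lambda^2(c)$ for $2$-torsion $c$ so the two cases unify immediately into $w(-d)\sum_c(1+\Lambda^2(c))$, avoiding the intermediate $|A|$ terms that you carry along and then watch cancel; otherwise the arguments are the same.
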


\begin{proof} 
  Let $H \subset \Cl_d$ be the group of $2$-torsion elements. The
  classes $c'$ which are $\GL(2,\Z)$-equivalent to a given class $c$
  are $c$ and $c^{-1}$, hence there are either one or two, depending
  on whether $c\in H$ or not. Similarly, $|\Aut(c)|$ (which is the
  order of $\GL(2,\Z)$-automorphisms of a representative of $c$)
  equals either $2w(-d)$ or $w(-d)$, depending on whether $c$ lies in
  $H$ or not.

Therefore, we have
\begin{align*}
  \sum_{c,c'\in\Cl_d} \Lambda(c)\overline{\Lambda(c')}
  \delta(c,c')|\Aut(c)| &=
 \sum_{c\in\Cl_d} \Lambda(c)|\Aut(c)| 
\sum_{c'}\overline{\Lambda(c')} \delta(c,c')
\\
&=\sum_{c\in H}{|\Aut(c)|\Lambda(c)^2}
+
\sum_{c\notin H}{|\Aut(c)|\Lambda(c)(\Lambda(c)+\Lambda(c^{-1}))}
\\
&=
w(-d)\sum_{c}{(1+\Lambda^2(c))}
\end{align*}
by writing $2=1+\Lambda(c^2)=1+\Lambda^2(c)$ when $c\in H$.
The result follows immediately.
\end{proof}

Now we come to the proof of Proposition~\ref{pr:petersson}.

\begin{proof}
  For brevity, we drop the subscripts $d$ and $\Lambda$ here. For
  $F\in \cusp_k^*$, by definition of $\weight_k^F$, we have
$$
\weight^F_{k}\prod_{p\in \sop} U^{l_p,m_p}_{p}(a_{p}(F),b_{p}(F))
=\frac{4c_k d_\Lambda(d/4)^{\frac{3}{2}-k}}{w(-d)h(-d)}
  \frac{|a(d, \Lambda;F)|^2} {\langle F, F \rangle}\prod_{p\in \sop}
  U^{l_p, m_p}_{p}(a_{p}(F),b_{p}(F)).
$$
\par
Now, we write
$$
|a(d, \Lambda;F)|^2=a(d,\Lambda;F)\overline{a(d,\Lambda;F)}
$$
and using~(\ref{eq:alf}) to express the first term, we get
$$
\frac{|a(d, \Lambda;F)|^2} {\langle F, F \rangle}\prod_{p\in \sop}
U^{l_p, m_p}_{p}(a_{p}(F),b_{p}(F)) =
\frac{\overline{a(d,\Lambda;F)}}{\langle F,F\rangle} 
\sum_{c\in\Cl_d}{\overline{\Lambda(c)}
a(F,S_c) \prod_{p\in \sop} U^{l_p,m_p}_{p}(a_{p}(F),b_{p}(F))}.
$$
\par
Now Theorem~\ref{t:relation} applies to transform this into
\begin{align*} 
  \frac{|a(d, \Lambda;F)|^2} {\langle F, F \rangle}\prod_{p\in \sop}
  U^{l_p, m_p}_{p}(a_{p}(F),b_{p}(F)) &= \frac{|\Cl_d|}{|\Cl_d(M)|} \frac{L^{\frac{3}{2}-k}
    M^{2-k}\overline{a(d,\Lambda;F)}}{\langle F,F\rangle}
  \sum_{c'\in \Cl_d(M)} {\overline{\Lambda(c')}a(F,S_{c'}^{L,M})}
\\
&=\frac{|\Cl_d| L^{3/2-k}M^{2-k}}{|\Cl_d(M)|}
\sum_{\substack{c\in \Cl_d\\ c'\in \Cl_d(M)} }{\Lambda(c)\overline{\Lambda(c')} \cdot 
\frac{\overline{a(F,S_{c})}a(F,S_{c'}^{L,M})}{\langle F,F\rangle}},
\end{align*}
after expanding $\overline{a(d,\Lambda;F)}$ using its definition. We
are now reduced to a quantity involving only Fourier coefficients.
\par
We then apply the basic property of the Poincar\'e
series~(\ref{eq:poincare-basic}) to express these Fourier coefficients
in terms of inner product with Poincar\'e series: we have
\begin{align*}
  \overline{\langle F,P_{k, S_{c}} \rangle} &= 8c_k \left(\frac{d}{4}
  \right)^{-k+3/2}
\overline{a(F,S_{c})},\\
  \langle F,P_{k, S_{c'}^{L,M}}  \rangle &= 8c_k
  (LM)^{-2k+3}\left(\frac{d}{4} \right)^{-k+
    3/2}a(F,S_{c'}^{L,M})
\end{align*}
for $c\in \Cl_d$, $c'\in \Cl_d(M)$, and multiplying out with the normalizing
constants, we get
$$
\weight^F_{k}\prod_{p\in \sop} U^{l_p,m_p}_{p}(a_{p}(F),b_{p}(F)) =
\frac{M^{k-1}L^{k-3/2}d_\Lambda(d/4)^{k-\frac{3}{2}} }{16 c_kw(-d) |\Cl_d(M)|}
\sum_{\substack{c\in \Cl_d\\ c'\in \Cl_d(M)}}
\Lambda(c)\overline{\Lambda(c')}
\frac{\overline{\langle  F,
  P_{k, S_{c}} \rangle} \langle F,P_{k, S_{c'}^{L,M}} \rangle}{\langle
  F, F \rangle}.
$$
for every $F\in\cusp_k^*$.
\par
We now sum over $F$ and exchange the summation to average over $F$
first. Since $\{F/\|F\|\}$ is an orthonormal basis of the vector space
$\cusp_k$, we have
$$
\sum_{F \in \cusp_k^\ast} \frac{1}{\|F\|^2}
\overline{\langle F,P_{k,S_{c}}\rangle}
\langle F,P_{k, S_{c'}^{L,M}} \rangle
=
\langle P_{k, S_{c}}, P_{k, S_{c'}^{L,M}} \rangle.
$$
\par
Now, according to~(\ref{eq:poincare-basic}) again, we have
$$
 \langle P_{k,  S_{c}},P_{k, S_{c'}^{L,M}} \rangle
= 8c_k\Bigl(\frac{dL^2M^2}{4}\Bigr)^{-k+\frac{3}{2}}
a(k;c,c',L,M)
$$
where $a(k;c,c',L,M)$ denotes, as before, the $S_{c'}^{L,M}$-th Fourier
coefficient of the Poincar\'e series $P_{k, S_{c}}$. Applying this
and the formal definition
$$
a(k;c,c',L,M)=|\Aut(c)|\delta(c,c';L;M)+L^{k-3/2}M^{k-2}
A(k;c,c',L,M),
$$
as in Proposition~\ref{proppetersson}, we obtain first, using
Lemma~\ref{autcsum} that
\begin{multline*}
\sum_{F \in \cusp_k^\ast} \weight^F_{k}\prod_{p\in \sop} U^{l_p,
  m_p}_{p}(a_{p}(F),b_{p}(F))=
\delta(l;m)+\\
L^{-k+3/2}M^{-k+2}\frac{d_\Lambda}{2h(-d)w(-d)|\Cl_d(M|}
\sum_{\substack{c\in \Cl_d\\ c'\in \Cl_d(M)}}
\Lambda(c)\overline{\Lambda(c')}
A(k;c,c',L,M),
\end{multline*}
and then Proposition~\ref{proppetersson} and
Proposition~\ref{proppetersson2} lead immediately to the desired
result.
\end{proof}

\begin{remark}
  In the case of cusp forms on $\SL(2,\Z)$ and its congruence
  subgroups, one can write the Petersson formula in a way which is
  suitable for further transformations (with ``off-diagonal terms''
  involving Kloosterman sums), as first investigated by Duke,
  Friedlander and Iwaniec. These are of crucial importance in, e.g.,
  the extension of the range of test functions for low-lying zeros
  in~\cite{ils}). In our case, the complexity of the analogue
  expansion (which is only implicit in Kitaoka's work) for Siegel cusp
  forms makes this a rather doubtful prospect, at least at the moment.
\end{remark}

\section{Local equidistribution}\label{equidistribution}

To pass from Proposition~\ref{pr:petersson} to a local
equidistribution result, we must understand how the test functions
considered there relate to the space of all continuous functions on
$Y_{\sop}$. This is the purpose of this section.

\subsection{Symmetric functions and polynomials}

We first observe explicitly that the Laurent polynomials
$$
U_p^{l,m}(a,b)\in \C[a,b,a^{-1},b^{-1}]
$$
of Theorem~\ref{t:sugano} are invariant under the transformations
$$
(a,b)\mapsto (b,a),\quad\quad (a,b)\mapsto (a^{-1},b),\quad\quad
(a,b)\mapsto (a,b^{-1}),
$$
which means that they can be interpreted as functions (also denoted
$U_p^{l,m}$) on the space $Y_p$ or on the set $X_p$ of unramified
principal series of $G(\Q_p)$.  We first state a simple consequence of
Proposition~\ref{pr:pol-basis}.

\begin{corollary}\label{cor:spanning}
  Let $\sop$ be a fixed finite set of primes, and let  $Y_{\sop}$ be as
  before. The linear span of the functions
$$
(a_p,b_p)_{p\in \sop}\mapsto
\prod_{p\in \sop}{
U_p^{l_p,m_p}(a_p,b_p)
},
$$
where $(l_p)$, $(m_p)$ run over non-negative integers indexed by $\sop$,
is dense in the space $C(Y_{\sop})$ of continuous functions on $Y_{\sop}$.
\end{corollary}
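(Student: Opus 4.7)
The plan is to deduce the corollary from the complex Stone--Weierstrass theorem applied to the compact Hausdorff space $Y_{\sop} = \prod_{p \in \sop} Y_p$ and the subalgebra $\mathcal{A} \subset C(Y_{\sop})$ of finite $\C$-linear combinations of the functions $\prod_{p \in \sop} U_p^{l_p, m_p}(a_p,b_p)$. Compactness of $Y_{\sop}$ is standard: each $Y_p$ is the quotient of a compact subset of $(\C^\times)^2$ by the finite Weyl group $W$, so it is compact Hausdorff, and a finite product of such spaces inherits both properties.

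Next I verify that $\mathcal{A}$ satisfies the hypotheses of Stone--Weierstrass. For the subalgebra property, the product $U_p^{l,m} \cdot U_p^{l',m'}$ at a single prime is a $W$-invariant Laurent polynomial in $\C[a,b,a^{-1},b^{-1}]$, and by Proposition~\ref{pr:pol-basis} it lies in the $\C$-span of $\{U_p^{l'',m''}\}$; the product structure across $\sop$ then closes $\mathcal{A}$ under multiplication. The algebra contains the constant function $1$ since $U_p^{0,0}=1$ for every $p$ (inspection of Sugano's generating function at $X=Y=0$). For point separation, two distinct Weyl orbits in $Y_p$ are distinguished by the elementary $W$-invariants $(a+a^{-1})+(b+b^{-1})$ and $(a+a^{-1})(b+b^{-1})$, each of which lies in the $\C$-span of the $U_p^{l,m}$ by Proposition~\ref{pr:pol-basis}.

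The most delicate hypothesis is closure under pointwise complex conjugation. Using the Weyl equivalence $(a,b)\sim_W (a^{-1},b^{-1})$ built into the definition of $Y_p$, every $W$-invariant Laurent polynomial $P$ satisfies the functional identity $P(a,b)=P(a^{-1},b^{-1})$ on $Y_p$. On the tempered locus, where $|a|=|b|=1$, we have $\bar a = a^{-1}$ and $\bar b = b^{-1}$, so $\overline{P(a,b)} = \bar P(\bar a,\bar b) = \bar P(a^{-1},b^{-1}) = \bar P(a,b)$, where $\bar P$ denotes the polynomial obtained from $P$ by conjugating coefficients; since $\bar P$ is again a $W$-invariant Laurent polynomial, it lies in the $\C$-span of the $U_p^{l,m}$. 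The same identity extends to the non-tempered part of $Y_p$ by invoking the fact that the relevant parameters correspond to unitary representations, for which $\bar\pi_p \cong \tilde\pi_p$ has Satake parameters $W$-equivalent to $(a^{-1},b^{-1})$; hence $\overline{P(a,b)} = \bar P(a,b)$ on all of $Y_p$, and $\mathcal{A}$ is stable under conjugation.

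With the four hypotheses in place, the complex Stone--Weierstrass theorem yields uniform density of $\mathcal{A}$ in $C(Y_{\sop})$, which is the statement of the corollary. The principal difficulty is the conjugation step: on the tempered locus it is immediate from $\bar a = a^{-1}$, but extending the identity $\overline{P(a,b)} = \bar P(a,b)$ to every point of $Y_p$ requires carefully using the unitarity information that implicitly singles out the relevant parameters inside the set cut out by~\eqref{eq:unitarity}; the other three conditions are routine consequences of Proposition~\ref{pr:pol-basis} and the structure of $W$-invariants.
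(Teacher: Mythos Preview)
Your approach is exactly the paper's: invoke Stone--Weierstrass after using Proposition~\ref{pr:pol-basis} to see that the $U_p^{l,m}$ span the $W$-invariant Laurent polynomials, then pass to products over $\sop$. The paper does this in a single sentence; you have simply filled in the verification of the Stone--Weierstrass hypotheses.

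One technical caveat is worth recording, since you rightly flag the conjugation step as the delicate one. Your compactness claim and your conjugation argument are both valid on the unitary locus $X_p$ (hence on $X_{\sop}$), but not on all of $Y_p$ as literally defined. The set $\{(a,b)\in(\C^\times)^2: |a|,|b|\leq\sqrt{p}\}$ is not compact (points with $|a|\to 0$ escape), and for a non-unitary parameter such as $(a,b)=(1+i,1)$ one checks that $(\bar a,\bar b)$ is \emph{not} $W$-equivalent to $(a,b)$, so the identity $\overline{P(a,b)}=\bar P(a,b)$ fails there and the algebra is not conjugation-closed on $Y_p\setminus X_p$. Your appeal to unitarity (``$\bar\pi_p\cong\tilde\pi_p$'') is precisely what singles out $X_p$ inside $Y_p$, so it cannot be used to extend beyond $X_p$. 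This is really a minor imprecision in the paper's formulation rather than a defect in your reasoning: all the measures $\nu_{\sop,k}$ and $\mu_{\sop}$ live on $X_{\sop}$, and for the weak-$*$ convergence in Theorem~\ref{maintheorem} only density in $C(X_{\sop})$ is required, where your argument goes through without change.
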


\begin{proof}
  By the Stone-Weierstrass Theorem, this follows immediately from
  Proposition~\ref{pr:pol-basis}, using the product structure to go
  from a single prime to a finite set of primes.
\end{proof}

The point of this, in comparison with Proposition~\ref{pr:petersson},
is of course the following fact:

\begin{proposition}\label{propplancherelmeasure}
  Let $\sop$ be any fixed finite set of primes, and let $\mu_{\sop}$
  be the associated Plancherel measure on $Y_{\sop}$, defined in the
  introduction. We have
$$
\int_{Y_{\sop}}{ \prod_{p\in \sop}{ U_p^{l_p,
  m_p}(a_{p},b_{p})}d\mu_{\sop}}
=
\begin{cases} 
1 &\text{ if } l_p = m_p=0 \text{ for all } p\in \sop,
\\ 
0 & \text{ otherwise, } 
\end{cases}
$$
for all non-negative integers $(l_p)$, $(m_p)$ indexed by primes in
$\sop$.
\end{proposition}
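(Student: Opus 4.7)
The plan is to reduce the proposition to a single-prime statement via the product structure, then package all the orthogonalities at once using Sugano's generating function from Theorem~\ref{t:sugano}.

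First, since $d\mu_{\sop} = \bigotimes_{p\in\sop} d\mu_p$ and the integrand is of product form, the integral factorizes as
$$
\int_{Y_{\sop}} \prod_{p\in\sop} U_p^{l_p,m_p}(a_p,b_p)\, d\mu_{\sop} = \prod_{p\in\sop} \int_{Y_p} U_p^{l_p,m_p}(a,b)\, d\mu_p(a,b).
$$
Hence it suffices to prove the single-prime statement: for each prime $p$ and each pair $(l,m)$ of non-negative integers,
$$
\int_{Y_p} U_p^{l,m}(a,b)\, d\mu_p(a,b) = \delta_{l,0}\,\delta_{m,0}.
$$

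The case $(l,m)=(0,0)$ is immediate: by the remark following~\eqref{eq-u10}, $U_p^{0,0} = 1$, and the fact that each $\mu_p$ is a probability measure --- already noted in the paragraph introducing $\mu_p$ as the Furusawa--Shalika Plancherel measure for the Bessel model --- gives the value $1$.

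For $(l,m)\neq (0,0)$, the plan is to handle all remaining orthogonalities simultaneously by integrating the generating function
$$
C_p(X,Y;a,b) = \sum_{l,m\geq 0} U_p^{l,m}(a,b)\,X^m Y^l = \frac{H_p(X,Y)}{P_p(X)\,Q_p(Y)}
$$
against $\mu_p$, and showing the single identity
$$
\int_{Y_p} C_p(X,Y;a,b)\, d\mu_p(a,b) = 1
$$
as a formal power series in $X,Y$ (or for $|X|,|Y|$ small). Extracting the coefficient of $X^m Y^l$ then yields the result. This integral identity is exactly the Plancherel/Parseval formula for the local Bessel model: the Bessel functions $B_p(h_p(l,m))$, which are the spherical vectors parameterizing the unramified constituents of the Bessel model space, are (via Theorem~\ref{t:sugano}) precisely the $U_p^{l,m}$ up to the power $p^{-2m-3l/2}$, and the measure $\mu_p$ is engineered so that they form an orthogonal family with respect to $U_p^{0,0}=1$.

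The main obstacle is establishing this generating-function identity. The direct approach substitutes the explicit form $d\mu_p = (1 - \chi_d(p)/p)\,\Delta_p^{-1}\,d\mu$ from~\eqref{eq:density} and~\eqref{eq-haar-measure}, reducing the claim to an explicit double integral over $(\theta_1,\theta_2)\in [0,\pi]^2$. The integrand factorizes along the lines $\Delta_p(\theta_1,\theta_2) = \delta_p(\theta_1)\delta_p(\theta_2)$, and one can compute by expanding $1/P_p(X)$ and $1/Q_p(Y)$ in geometric series (as in the proof of Lemma~\ref{lm:bounds-ulm}) and invoking the one-variable orthogonality of characters of $\mathrm{USp}(2)$ against $\delta_p^{-1}$. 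This would need to be carried out separately in the split, inert, and ramified cases, with the scalar $(1 - \chi_d(p)/p)$ absorbing the remaining discrepancy. Alternatively --- and this is the route I would favor --- one simply invokes the Plancherel formula of Furusawa--Shalika~\cite{furshal} for the local Bessel model, which takes the orthogonality relation as a built-in output of the measure's construction and bypasses the case analysis entirely.
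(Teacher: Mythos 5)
Your proposal is correct and takes essentially the same approach as the paper: reduce to a single prime via the product structure of $\mu_{\sop}$, then appeal to the Plancherel formula for the local Bessel model (the paper cites~\cite[equation (8)]{furmarsha} for this, and your generating-function reformulation of the single-prime identity corresponds exactly to the paper's remark that the result can also be obtained by contour integration using Theorem~\ref{t:sugano}). Neither you nor the paper carry out the explicit computation, so the level of rigor is comparable.
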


\begin{proof} 
  Since we work with product measures and product functions, it is
  enough to prove this for the case $n=1$. But that follows directly
  from~\cite[equation (8)]{furmarsha}.
\end{proof}

\begin{remark}
  This fact can also be proved by direct contour integration via
  Cauchy's formula using the generating function description for
  $U_{p}^{l,m}(a,b)$ (given by Theorem~\ref{t:sugano}).
\end{remark}

It is now a simple matter to conclude the proof of
Theorem~\ref{maintheorem}. 

\begin{proof}[Proof of Theorem~\ref{maintheorem}]
  Fix a finite set of primes $\sop$. Using the Weyl equidistribution
  criterion, in order to prove that $\nu_{\sop,k}$ converges weakly to
  $\mu_{\sop}$ as $k\ra +\infty$ over even integers, it suffices to show that
\begin{equation}\label{eq:todo}
\lim_{k\ra +\infty}{\int_{Y_{\sop}}\varphi((x_p))d\nu_{\sop,k}}
=\int_{Y_{\sop}}{\varphi(x)d\mu_{\sop}(x)}
\end{equation}
for all functions $\varphi$ taken from a set of continuous functions
whose linear combinations span $C(Y_{\sop})$. By
Corollary~\ref{cor:spanning}, the functions 
$$
\varphi((a_p,b_p))=
\prod_{p\in \sop}{
U_p^{l_p,m_p}(a_p,b_p)
},
$$
where $(l_p)$, $(m_p)$ are non-negative integers indexed by $\sop$, form
such a set. But for $\varphi$ of this type, the desired
limit~(\ref{eq:todo}) is obtained by combining
Proposition~\ref{propplancherelmeasure} and
Proposition~\ref{pr:petersson}. 
\par
Now, for the proof of the quantitative
version~(\ref{eq:quant-local}). First of all, we can assume that all
polynomials $\varphi_p$ are non-constant, i.e., that $d_p\geq 1$ for
each $p\in \sop$, by working with a smaller $\sop$ if necessary (and
incorporating the constant functions at a single prime).  The
polynomials $\varphi_p$ are finite linear combinations, say
$$
\varphi_p(a_p,b_p)=\sum_{0\leq l_p\leq e_p}\sum_{0\leq m_p\leq f_p}{
  \hat{\varphi}_p(l_p,m_p)U_p^{l_p,m_p}(a,b)}
$$
of the basis polynomials $U^{l_p,m_p}_p$, for some $e_p$, $f_p\geq 0$
with $\max(e_p,f_p)\geq 1$.
\par
Taking the product of these expressions over $\sop$, summing over $F$,
and using~(\ref{eq:ortho-quant}) we get
\begin{equation}\label{eq:quantrem}
\sum_{F\in \cusp_{2k}^*}{\weight_{2k}^F \varphi((a_p(F),b_p(F))_{p\in
    S})}
=\int_{Y_{\sop}}{\varphi(x)d\mu_{\sop}(x)}+k^{-2/3}R
\end{equation}
where we see that the remainder $R$ can be bounded by
$$
|R|\ll
\sum_{L\mid L_{\varphi}}\sum_{M\mid M_{\varphi}}
L^{1+\eps}M^{3/2+\eps}
\prod_{p\in \sop}{|\hat{\varphi}_p(v_p(L),v_p(M))|}
$$
for any $\eps>0$, where the implied constant depends only on $\eps$
and
$$
L_{\varphi}=\prod_{p\in \sop}{p^{e_p}},\quad\quad
M_{\varphi}=\prod_{p\in \sop}{p^{f_p}}.
$$
\par
The coefficients in the expansion are obtained as inner products
$$
\hat{\varphi}_p(l,m)=\langle \varphi_p, U_p^{l,m}\rangle
$$
in $L^2(Y_p,d\mu_p)$, by orthogonality of the polynomials
$U_p^{l,m}$. Since the underlying measure $\mu_p$ is a probability
measure supported on the tempered subset $X\subset Y_p$, those
coefficients may be bounded by
$$
|\hat{\varphi}_p(v_p(L),v_p(M))|\leq
\|\varphi_pU_p^{l_p,m_p}\|_{\infty}
\leq 
C(v_p(L)+3)^3(v_p(M)+3)^3
\|\varphi_p\|_{\infty},
$$
by Lemma~\ref{lm:bounds-ulm}. Therefore, we get the estimate
$$
|R|\ll L^{1+\eps}_{\varphi}M^{3/2+\eps}_{\varphi} \eta(LM) \prod_{p\in
  S}{\|\varphi_p\|_{\infty}}=
L^{1+\eps}_{\varphi}M^{3/2+\eps}_{\varphi} \eta(LM)
\|\varphi\|_{\infty}
$$
where $\eta(n)$ is the multiplicative function such that
$$
\eta(p^{\nu})=C(\nu+1)^6
$$
for $p$ prime and $\nu\geq 0$ (here we use the fact that
$\max(e_p,f_p)\geq 1$ for each $p$). This is a divisor-like function,
i.e., it satisfies
$$
\eta(n)\ll n^{\eps}
$$
for any $\eps>0$, the implied constant depending only on
$\eps$. Therefore we get
$$
|R|\ll L^{1+\eps}_{\varphi}M^{3/2+\eps}_{\varphi}
\|\varphi\|_{\infty},
$$
for any $\eps>0$, where the implied constant depends only on $\eps>0$.
\par
To derive~(\ref{eq:quant-local}), we observe that, by the second part of
Proposition~\ref{pr:pol-basis}, the linear decomposition of
$\varphi_p$ holds with
$$
e_p+2f_p\leq d_p,
$$
where $d_p$ is the total degree of $\varphi_p$ as polynomial in
$(a+a^{-1},b+b^{-1})$. Thus, at the cost of worsening the factor
involving $M$, we obtain~(\ref{eq:quant-local}).
\end{proof}

\begin{remark}\label{rm:stronger}
  The proof shows that if we know that the factors $\varphi_p$ are
  combinations of polynomials $U_p^{l,m}$ with $l\leq l_p$, $m\leq
  m_p$, we have the stronger estimate
$$
\sum_{F\in \cusp_{2k}^*}{\weight_{2k}^F \varphi((a_p(F),b_p(F))_{p\in
    S})}= \int_{Y_{\sop}}{\varphi(x)d\mu_{\sop}(x)} 
+O\Bigl(
k^{-2/3}L^{1+\eps}M^{3/2+\eps}\|\varphi\|_{\infty} \Bigr)
$$
for any $\eps>0$, where
$$
L=\prod_{p\in \sop}{p^{l_p}},\quad\quad M=\prod_{p\in \sop}{p^{m_p}}.
$$
\end{remark}

\section{Applications}\label{applications}

We now gather some applications of the local equidistribution
theorem. To emphasize the general principles involved, and their
expected applicability to the most general ``families'' of
$L$-functions, we denote
$$
\expect_k(\alpha(F))=\frac{1}{\displaystyle{\sum_{F\in
      \cusp_k^*}{\weight_k^F}}} \sum_{F\in
  \cusp_k^*}{\weight_k^F\alpha(F)}
$$
for $k\geq 2$ even and for any complex numbers $(\alpha(F))$. This is
the averaging operator for a probability measure depending on $k$, and
we know from the previous results that
$$
\expect_k(\alpha(F))\sim \sum_{F\in \cusp_{k}^*}{\weight_k^F\alpha(F)}
$$
as $k\rightarrow +\infty$ over even integers. We denote by
$\proba_k(\bullet)$ the associated probability. We also recall that
$\cusp_{2k}^{\flat}$ is the set of cusp forms which are not
Saito-Kurokawa forms, and we denote by $\cusp_{2k}^\sharp$ the
complementary set of Saito-Kurokawa lifts.

\subsection{Direct applications}\label{ssec:direct}

We start with direct consequences of the local equidistribution.  The
first is partly superseded by the proof of the generalized Ramanujan
conjecture in our case~\cite{weissram}, but it may still be taken as
an indication that the special Saito-Kurokawa modular forms which fail
to satisfy it are ``few'', even when counted with our special weights.

\begin{proposition}\label{pr:rp}
  \emph{(1)} Fix a prime $p$. Then ``most'' $F\in \cusp_k^*$ satisfy
  the generalized Ramanujan conjecture at $p$, in the sense
  that we have
$$
\proba_{2k}(\pi_p(F)\text{ is not tempered})
=\proba_{2k}(\pi_p(F)\notin X\subset X_p)
\longrightarrow 0
$$
as $k\ra +\infty$.
\par
\emph{(2)} Let $\alpha(F)$ be any bounded function defined for $F$
which are Saito-Kurokawa lifts, i.e., $F\in \cusp_k^{\sharp}$. Then
we have
$$
\lim_{k\ra +\infty}{
\expect_{2k}(\alpha(F)\mathbf{1}_{\{\text{$F\in \cusp_{2k}^{\sharp}$}\}})
}
=0.
$$
\end{proposition}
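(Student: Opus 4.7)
The plan is to combine the weak convergence from Theorem~\ref{maintheorem} with the explicit description of the Satake parameters of Saito-Kurokawa lifts. I will treat part~(2) first and deduce part~(1) from it using Weissauer's theorem.

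For part~(2), recall from the discussion preceding Theorem~\ref{maintheorem} that if $F \in \cusp_{2k}^\sharp$ is a Saito-Kurokawa lift, then at every prime $p$ the local parameter $(a_p(F), b_p(F)) \in Y_p$ satisfies $|a_p(F)| = 1$ and $\{|b_p(F)|, |b_p(F)|^{-1}\} = \{p^{1/2}, p^{-1/2}\}$. Fix an arbitrary prime $p$, and consider the continuous Weyl-invariant function $f \colon Y_p \to [0, \infty)$ defined by
$$
f(a, b) = |a| + |a|^{-1} + |b| + |b|^{-1} - 4,
$$
which vanishes exactly on the tempered locus $X$ (where $|a|=|b|=1$) and satisfies $f(a_p(F), b_p(F)) = p^{1/2} + p^{-1/2} - 2 =: \delta_p > 0$ for every Saito-Kurokawa lift $F$. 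Applying Theorem~\ref{maintheorem} with $\sop = \{p\}$ together with the fact that $\mu_p$ is supported on $X$ gives
$$
\delta_p \sum_{F \in \cusp_{2k}^\sharp} \weight_{2k}^F \;\leq\; \int_{Y_p} f \, d\nu_{\{p\}, 2k} \;\longrightarrow\; \int_{Y_p} f \, d\mu_p = 0.
$$
Since the same theorem yields $\sum_{F \in \cusp_{2k}^*} \weight_{2k}^F \to 1$, one obtains
$$
\bigl|\expect_{2k}(\alpha(F)\, \mathbf{1}_{\{F \in \cusp_{2k}^\sharp\}})\bigr| \;\leq\; \|\alpha\|_\infty \cdot \frac{\sum_{F \in \cusp_{2k}^\sharp} \weight_{2k}^F}{\sum_{F \in \cusp_{2k}^*} \weight_{2k}^F} \;\longrightarrow\; 0,
$$
which proves~(2).

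For part~(1), I invoke Weissauer's theorem~\cite{weissram}: for $F \in \cusp_{2k}^*$, the local representation $\pi_p(F)$ fails to be tempered if and only if $F$ is a Saito-Kurokawa lift. Under this identification, the event $\{\pi_p(F) \notin X\}$ coincides with the event $\{F \in \cusp_{2k}^\sharp\}$, so (1) is obtained from (2) applied with $\alpha \equiv 1$.

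The main obstacle is the appeal to Weissauer's theorem in part~(1). The simple argument via the test function $f$ above only controls non-tempered parameters that are bounded away from $X$, which are precisely the Saito-Kurokawa parameters. To avoid Weissauer entirely, one would need either a complete classification of the unitary unramified irreducible representations of $\GSp(4, \Q_p)$ that can arise as $\pi_p(F)$ for $F \in \cusp_{2k}^*$ (ruling out complementary series approaching the tempered locus), or a quantitative polynomial argument using the fact that $|U_p^{l,m}(a, b)|$ grows super-polynomially in $(l, m)$ whenever $(a,b)$ is non-tempered, combined with the quantitative Petersson formula of Proposition~\ref{pr:petersson}.
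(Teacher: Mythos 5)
Your proof is correct and takes essentially the same approach as the paper's, which consists of a single sentence (``Since the limiting measure $\mu_p$ is supported on $X\subset X_p$, this is immediate''); you have filled in the details that the paper leaves implicit, in particular the need for a continuous Weyl-invariant test function vanishing on $X$ but bounded away from zero on the Saito-Kurokawa parameters, and you correctly flag that part~(1) (unlike part~(2)) requires Weissauer's theorem to identify the non-tempered locus with $\cusp_{2k}^\sharp$ --- a point that matters because weak-$*$ convergence to a measure supported on $X$ alone would not rule out mass escaping toward $X$ along complementary series.
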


\begin{proof}
Since the limiting measure $\mu_p$ is supported on $X\subset X_p$,
this is immediate.
\end{proof}

In particular, it follows that the measure $\nu_{\sop,k}^{\flat}$
defined as $\nu_{\sop,k}$, but with $F$ restricted to
$\cusp_k^{\flat}$, also converge weakly, as $k\ra +\infty$, to
$\mu_{\sop}$.  We will denote by $\expect_k^{\flat}(\alpha(F))$ the
average
$$
\expect_k^{\flat}(\alpha(F))
=\expect_k(\alpha(F)\mathbf{1}_{\{F\in \cusp_k^{\flat}\}}),
$$
so that the previous result means that this is still, asymptotically,
a probability average.
\par
The next result has the feel of a ``Strong Approximation'' theorem:

\begin{proposition}[``Strong approximation'']\label{pr:strong}
  \emph{(1)} Let $\mathrm{Aut}$ denote the set of all cuspidal
  automorphic representations on $\GSp(4,\A)$. Then for any finite set
  of primes $\sop$, the local components $\pi_{\sop}$ for those
  $\pi\in\mathrm{Aut}$ unramified at the primes in $\sop$ form a dense
  subset of $X^{\sop}$.
\par
\emph{(2)} Fix a finite set of prime $\sop$, and let $(\eps_p)_{p\in
  \sop}$ be signs $\pm 1$. There exist infinitely many Siegel cusp
forms $F$ of level $1$ which are Hecke-eigenforms such that the Hecke
eigenvalues at all $p\in \sop$ have sign $\eps_p$.
\end{proposition}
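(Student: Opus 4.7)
The plan is to deduce both statements from Theorem~\ref{maintheorem} by exhibiting nonnegative continuous test functions supported on suitable open subsets $U\subset X^{\sop}$, and observing that the limit measures $\mu_{\sop}$ assign positive mass to every nonempty open subset of $X^{\sop}$. Concretely, direct inspection of the explicit formulas (\ref{eq-haar-measure}) and (\ref{eq:density}) shows that on the interior locus $\{0<\theta_{1,p}<\theta_{2,p}<\pi\}_{p\in\sop}$, $\mu_{\sop}$ has a strictly positive continuous density: the Haar factor $(\cos\theta_1-\cos\theta_2)^2\sin^2\theta_1\sin^2\theta_2$ is positive there, and one checks case-by-case (inert, split, ramified, using $|\lambda_p|\le 2$) that each $\Delta_p^{-1}$ is positive on this interior. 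Consequently the support of $\mu_{\sop}$ equals $X^{\sop}$.

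For part (1), fix a nonempty open set $U\subset X^{\sop}$ and choose a continuous function $\varphi\ge 0$ on $Y_{\sop}$ supported in $U$ with $\int\varphi\,d\mu_{\sop}>0$, which is possible by the preceding observation. By Theorem~\ref{maintheorem},
\[
\sum_{F\in\cusp_{2k}^{*}}\weight_{2k}^{F}\,\varphi(\pi_{\sop}(F))\;\longrightarrow\;\int_{Y_{\sop}}\varphi\,d\mu_{\sop}>0
\]
as $k\to\infty$ through even integers, so for all sufficiently large $k$ there exists at least one $F\in\cusp_{2k}^{*}$ with $\weight_{2k}^{F}>0$ and $\pi_{\sop}(F)\in U$. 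Since every such $F$ yields a cuspidal automorphic representation $\pi_F$ of $\GSp(4,\A)$ unramified at every finite prime (in particular at the primes of $\sop$), this gives the required density.

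For part (2), recall that on the tempered locus the (suitably normalized) $T(p)$-Hecke eigenvalue of a Hecke eigenform $F$ at the prime $p$ is a positive multiple of
\[
\sigma(a_p(F),b_p(F))=a_p(F)+b_p(F)+a_p(F)^{-1}+b_p(F)^{-1}=2\cos\theta_{1,p}+2\cos\theta_{2,p},
\]
so its sign is the sign of $\sigma$. The set
\[
U=\prod_{p\in\sop}\{(a,b)\in X:\eps_p\,\sigma(a,b)>0\}\subset X^{\sop}
\]
is open, and nonempty (since $\sigma$ takes both signs on the interior of $X$), hence has positive $\mu_{\sop}$-measure by the first paragraph. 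Applying the argument of part (1) to this $U$ at every even $k$ sufficiently large produces $F\in\cusp_{2k}^{*}$ with $\weight_{2k}^{F}>0$ and $\pi_{\sop}(F)\in U$, i.e.\ a Hecke eigenform with the prescribed sign pattern; varying $k$ yields infinitely many such $F$.

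The only real point requiring attention is the positivity of the density of $\mu_{\sop}$ on the interior of $X^{\sop}$, which we extract from the explicit case-by-case formula (\ref{eq:density}); once this is granted, the rest is a routine application of weak convergence together with the fact that the weights $\weight_{2k}^{F}$ are nonnegative, so that positivity of the limit integral forces positive contribution from at least one $F$.
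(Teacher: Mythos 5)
Your proposal is correct and follows essentially the same route as the paper: both deduce (1) from the fact that the support of $\mu_{\sop}$ is all of $X^{\sop}$ combined with the weak convergence of Theorem~\ref{maintheorem} (and nonnegativity of $\weight_{2k}^F$), and both deduce (2) by noting that the sign of the Hecke eigenvalue at $p$ is the sign of $a_p+a_p^{-1}+b_p+b_p^{-1}$ and that the corresponding sign-pattern set has positive $\mu_{\sop}$-measure by the explicit formulas~(\ref{eq-haar-measure}) and~(\ref{eq:density}). You merely spell out the positivity of the density on the interior and the ``positive limit forces a positive summand'' step, which the paper leaves implicit.
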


\begin{proof}
(1) The support of the limiting measure $\mu_{\sop}$ is $X^{\sop}$, hence the
  result is again immediate (with the much more precise information
  that denseness holds already for $\pi$ associated to Siegel cusp
  forms of full level).
\par
(2) This sample application follows from the fact that the Hecke
eigenvalue at a prime $p$ is $a_p+a_p^{-1}+b_p+b_p^{-1}$ for $F\in
\cusp_k^*$, and it is clear from the formulas for the Haar measure
$\mu$~(\ref{eq-haar-measure}) and for the density function
$\Delta_p$~(\ref{eq:density}) that
$$
\mu_p(\{(a_p,b_p)_{p\in \sop}\,\mid\, \text{the sign of
  $a_p+a_p^{-1}+b_p+b_p^{-1}$ is $\eps_p$}\})>0
$$
for any prime $p$ and hence 
$$ 
\mu_{\sop}(\{(a_p,b_p)_{p\in \sop}\,\mid\, \text{the sign of
  $a_p+a_p^{-1}+b_p+b_p^{-1}$ is $\eps_p$ for $p\in \sop$}\})>0
$$
(and hence, for $k$ large enough, at least one $F\in \cusp_k^*$
satisfies those local conditions).
\end{proof}

\begin{remark} In fact, our local equidistribution shows much
  more. For instance, for any non-negligible subset of $T$ of $X^{\sop}$
  and any fundamental discriminant $-d$, our result shows that one can
  find infinitely many Siegel modular forms whose local components at
  $\sop$ lie in $T$ and the sum of whose Fourier coefficients of
  discriminant $-d$ is non-zero. Since such sums of Fourier
  coefficients of Siegel modular forms are conjecturally related to
  central critical $L$ values of the twisted forms --- see
  Section~\ref{sec:lowlying} --- this can be interpreted as a
  (conditional) result on the plentitude of Siegel modular forms with
  prescribed local behavior and non-vanishing central critical values.
\end{remark}

\subsection{Averaging $L$-functions}\label{ssec-averaging}

The spin $L$-function associated to $F\in \cusp_k^*$ is defined, in terms
of the Satake parameters $(a_p,b_p)$, by the Euler product
$$
L(F,s)=\prod_p{(1-a_pp^{-s})^{-1}(1-b_pp^{-s})^{-1}
(1-a_p^{-1}p^{-s})^{-1}(1-b_p^{-1}p^{-s})^{-1}}\ ;
$$
it is explained in~\cite{asgsch} that this is a Langlands $L$-function
associated with the Spin representation $\Spin(5,\C)\ra
\GL(4,\C)$. This Spin group is the Langlands dual group of $\Sp(4)$,
and since $\Spin(5)\simeq \Sp(4)$, this is natural in view of the
parametrization of the local representations in terms of semisimple
conjugacy classes of $\USp(4,\C)$ which is described in the
introduction. From the point of view of $\Sp(4)$, this is the
Langlands $L$-function corresponding to the natural representation
$\GSp(4)\subset \GL(4)$.
\par
The idea is that in the region of absolute convergence (which is
$\Reel(s)>1$, for $F$ not a Saito-Kurokawa lift), the average of such
a product is the average of asymptotically independent random
variables, and hence will be the product of the averages for the
limiting distributions at each $p$. Saito-Kurokawa lifts, being
asymptotically negligible, do not cause much trouble in that case.
\par
To go to the details, we first recall that, from work of
Andrianov~\cite{andrianov}, it is known that $L(F,s)$ has the basic
standard analytic properties expected from an $L$-function; it is
self-dual with root number $(-1)^k$ and unramified at finite places;
it extends to a meromorphic function of $s$, and the completed
$L$-function
$$
\Lambda(F,s)=(2\pi)^{-2s}\Gamma(s+1/2)\Gamma(s+k-3/2)L(F,s)
$$
satisfies
$$
\Lambda(F,s)=(-1)^k\Lambda(F,1-s).
$$
\par
Furthermore, if $k$ is odd, the $L$-function is entire; otherwise, it
may have poles at $s=3/2$ and $s=-1/2$, and this happens precisely
when $F$ is a Saito-Kurokawa lift (i.e., if $F\in \cusp_k^{\sharp}$,
using the notation of Theorem~\ref{th-average-l-functions}). In that
case, the $L$-function is given by
\begin{equation}\label{eq-spinor-st}
L(F,s)=\zeta(s-1/2)\zeta(s+1/2)L(f,s)
\end{equation}
for some classical cusp form $f$ of weight $2k-2$ on $\SL(2,\Z)$ (the
$L$-function of which is also normalized so that the critical line is
$\Reel(s)=1/2$). Note that although, in general, there are other
automorphic forms on $\GSp(4)$ where the $L$-function have poles, the
Saito-Kurokawa lifts are the only ones which are holomorphic with
level $1$ (we refer to Piatetski-Shapiro's paper~\cite{ps-sk} for more
details).

\begin{remark}
  For completeness, even though we do not need it here, let us recall
  the corresponding results for the other ``standard'' $L$-function,
  which is the degree-five $L$-function coming from the projection
$$
\mathrm{pr}\,:\, \Spin(5,\C)\ra \SO(5,\C)\subset \GL(5,\C).
$$
\par
This $L$-function has the form
\begin{equation}\label{eq-pr-l-function}
L(F,\mathrm{pr},s)=\prod_p{
\Bigl((1-p^{-s})
(1-a_pb_pp^{-s})(1-a_pb_p^{-1}p^{-s})
  (1-a_p^{-1}b_pp^{-s})(1-(a_pb_p)^{-1}p^{-s}\Bigr)^{-1}
}.
\end{equation}
\par
From work of Mizumoto~\cite{miz81}, it is known that
$L(F,\mathrm{pr},s)$ has the basic standard analytic properties
expected from an $L$-function; it is self-dual with root number $1$
and unramified at finite places; it extends to an \emph{entire}
function of $s$, and the completed $L$-function
$$
\Lambda(F,\mathrm{pr},s)=2^{-2s} \pi^{-5s/2}
\Gamma\left(\frac{s+1}{2}\right)\Gamma(s+k-1)\Gamma(s+k-2)L(F,s)
$$
satisfies
$$
\Lambda(F,\mathrm{pr},s)=\Lambda(F,\mathrm{pr},1-s).
$$
\end{remark}

Let us now return to the spin $L$-function. To average the
$L$-function, we express it in additive terms. For this purpose, we
denote by
$$
\pi_p\,:\, \cusp_k^*\ra X_p
$$
the map $F\mapsto \pi_p(F)$, the local component of the automorphic
representation $\pi_F$ associated with $F$ as described earlier, which
we identify with $(a_p(F),b_p(F))\in Y_p$.
\par
Expanding the Euler factors in powers of $p^{-s}$ and then expanding
the product into Dirichlet series, we find the expression
$$
L(F,s)=\sum_{n\geq 1}{\lambda(F,n)n^{-s}}
$$
in the region of absolute convergence, where
$$
\lambda(F,n)=\prod_{p\mid n}{H_{v_p(n)}(\pi_p(F))},\quad\text{for}
\quad
n=\prod_{p\mid n}{p^{v_p(n)}},
$$
in terms of functions $H_m$, $m\geq 0$, on $Y_p$ given by the
symmetric functions
$$
H_m(a,b)=\sum_{k_1+k_2+k_3+k_4=m}{a^{k_1-k_3}b^{k_2-k_4}}
$$
(note that $H_m$ is independent of $p$, though that is not crucial in
what follows, and that it is well-defined on $X_p$ since it is
invariant under the Weyl group).
\par
If $\Reel(s)>1$, the series $L(F,s)$ converge absolutely at $s$ for
$F\in\cusp_{2k}^{\flat}$, and we have
$$
\expect_{2k}^{\flat}(L(F,s))=\sum_{n\geq 1}{\expect_{2k}^{\flat}
(\lambda(F,n))n^{-s}}.
$$
\par
Fix $n$ first, and factor it as before
$$
n=\prod_{p\mid n}{p^{v_p}}.
$$
\par
By our local equidistribution theorem applied to $\nu_{\sop,2k}^{\flat}$,
we have
$$
\expect^{\flat}_{2k}(\lambda(F,n))= \expect^{\flat}_{2k}
\Bigl(\prod_{p\mid  n}{H_{v_p}(\pi_p(F))}\Bigr) \ra
\prod_{p\mid  n}{\int_{X_p}{H_{v_p}(x)d\mu_p(x)}}
$$
as $k\ra +\infty$. Therefore, by the dominated convergence theorem, we
have
$$
\sum_{n\geq 1}{\expect^{\flat}_{2k}(\lambda(F,n))n^{-s}}
\ra \sum_{n\geq 1}{
\Bigl(\prod_{p\mid n}{\int_{X_p}{H_{v_p}(x)d\mu_p(x)}}\Bigr)
n^{-s}}
$$
since, using the formula
$$
\frac{1}{6}(m+1)(m+2)(m+3)
$$
for the number of monomials of degree $m$ in $4$ variables (the number
of terms in $H_m$) we have
$$
\Bigl|\expect^{\flat}_{2k}(\lambda(F,n))n^{-s}\Bigr| \leq
n^{-\sigma}\prod_{p\mid n}{(v_p+3)^3}
$$
for all $n\geq 1$ and ${k}$, which defines an absolutely convergent
series for $\sigma=\Reel(s)>1$. (We use here the generalized Ramanujan
conjecture, proved in this case by Weissauer~\cite{weissram}.)
\par
Now we refold back the limiting expression as an Euler product:
\begin{align*}
\sum_{n\geq 1}{
\Bigl(\prod_{p\mid n}{\int_{X_p}{H_{v_p}(x)d\mu_p(x)}}\Bigr)
n^{-s}}&=
\prod_p{\sum_{l\geq 0}{
p^{-{l}s}\int_{X_p}{H_{l}(x)d\mu_p(x)}
}}\\
&=
\prod_p{
\int_{X_p}{
L_p(x,s)d\mu_p(x)
}
},
\end{align*}
where $L_p(x,s)$ is the local $L$-factor of a local representation
$x=(a,b)\in X_p$ defined in the lemma above (the Euler expansion is
justified again by the fact that the series on the left is absolutely
convergent, as we checked in the lemma). Thus we have proved
\begin{equation}\label{e:average-flat}
\lim_{k\ra +\infty}{\expect_{2k}^{\flat}(L(F,s))}= \prod_p{ \int_{X_p}{
    L_p(x,s)d\mu_p(x) } }.
\end{equation}
\par
Now assume $\Reel(s)>1$ and $s\not=3/2$. Then all spin $L$-functions
of Saito-Kurokawa lifts are well-defined at $s$, and we therefore also
want to have average formulas involving them. If $\Reel(s)>3/2$, this
is immediate by the previous argument. Otherwise, we have
$$
\sum_{F\in \cusp_{2k}^{\sharp}}{\weight_{2k}^FL(F,s)}=
\zeta(s-1/2)\zeta(s+1/2)
\sum_{F\in \cusp_{2k}^{\sharp}}{\weight_{2k}^FL(f_F,s)},
$$
where $f_F$ is a classical modular form from which $F$ arises. The
$L$-function $L(f_F,s)$ is now absolutely convergent, and its values
are bounded for all Saito-Kurokawa lifts (by the generalized Ramanujan
conjecture, for instance). Thus we have
$$
\lim_{k\ra +\infty}\sum_{F\in \cusp_{2k}^{\sharp}}{\weight_{2k}^FL(F,s)}=0
$$
by Proposition~\ref{pr:rp}, (2), and this combined
with~(\ref{e:average-flat}) gives the result
$$
\lim_{k\ra +\infty}{\expect_{2k}(L(F,s))}= \prod_p{ \int_{X_p}{
    L_p(x,s)d\mu_p(x) } }.
$$
\par
At this point, it is clear how to extend this to other Langlands
$L$-functions. Indeed, let
$$
\rho\,:\, \GSp(4,\C)\ra \GL(r,\C)
$$ 
be an algebraic representation. The Langlands $L$-function is defined
by 
$$
L(F,\rho,s)=L(\pi_F,\rho,s)=\prod_{p}{\det(1-\rho(x_p(F))p^{-s})^{-1}}
$$
where
$$
x_p(F)=x_p(a_p,b_p)=\begin{pmatrix}
a_p& &&\\
&b_p&&\\
&&a_p^{-1}&\\
&&&b_p^{-1}
\end{pmatrix}
$$
is the semisimple conjugacy class of $\GSp(4,\C)$ associated with
$\pi_p(F)$. We can expand
$$
\det(1-\rho(x_p(F))p^{-s})^{-1}
=\prod_{1\leq j\leq r}{(1-\alpha_j(a_p,b_p)p^{-s})^{-1}}
$$
for some polynomial functions $\alpha_j$ on $X_p$, and then we can
repeat the same argument to derive~(\ref{e:general-rho}) with 
$$
\lim_{k\ra +\infty}{\expect_{2k}(L(F,\rho,s))}
=\prod_{p}{
\int_{X}{
\det(1-\rho(x_p(a,b))p^{-s})^{-1}d\mu_p(a,b)}},
$$
when $s$ is in the region of common absolute convergence of all $F$.
\par
Finally, to get the precise expression in
Theorem~\ref{th-average-l-functions} for the spin $L$-function, we
note that the special case~(\ref{e:onevar-sugano}) of Sugano's formula
(Theorem~\ref{t:sugano}), with $Y=p^{-s}$, gives the explicit
decomposition
$$
L_p(x,s)=
\Bigl(1 -\lambda_pp^{-1/2-s} +
\left(\frac{\field}{p}\right)p^{-1-2s}\Bigr)^{-1}
\Bigl(\sum_{l \ge 0} U^{l, 0}_p(a,b)p^{-ls}\Bigr)
$$
for any prime $p$. Applying Proposition~\ref{propplancherelmeasure},
we get therefore the simple expression
$$
\int_{X_p}{L_p(x,s)d\mu_p(x)}=\frac{1}{
1 -\lambda_pp^{-1/2-s} +
\left(\frac{\field}{p}\right)p^{-1-2s}},
$$
and (using the definition of $\lambda_p$) we recognize that this is
$$
L(\Lambda,s+1/2),
$$
where $\Lambda$ is the class group character of $\field=\Q(\sqrt{-d})$
defining our fixed Bessel models. When $d=4$ and $\Lambda$ is trivial,
this is $\zeta(s+1/2)L(\chi_4,s+1/2)$, which is the
formula~(\ref{e:firstaverage}). 

\begin{remark}
  In fact, this second argument for the spin $L$-function can be used
  to bypass the first one (which therefore requires only that we work
  with the family of functions $U_p^{l,0}(a,b)$).
\end{remark}

\begin{remark}
  Note that although Theorem~\ref{th-average-l-functions} was stated
  in the introduction only for averages with respect to the weight
  $\weight_{k}^F := \weight_{k, d, \Lambda}^F$ in the special case
  $d=4$, $\Lambda=1$, our proof actually works for general $d$ and
  $\Lambda$.
\end{remark}

The proof also gives the following fact concerning the limit
averages:

\begin{lemma}
  For $p$ prime, let $\mu_p$ be the limiting measure in the local
  equidistribution result and let
$$
L_p(x,s)=\prod_p{(1-a p^{-s})^{-1}
(1-b p^{-s})^{-1}
(1-a^{-1} p^{-s})^{-1}
(1-b^{-1} p^{-s})^{-1}}
$$
be the local $L$-function for $x\in X_p$. Then the Euler product
$$
\prod_p{\int_{X_p}{L_p(x,s)d\mu_p(x)}}
$$
is absolutely convergent for $\Reel(s)>1/2$.
\end{lemma}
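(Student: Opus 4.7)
The plan is to compute each local integral $\int_{X_p} L_p(x,s)\,d\mu_p(x)$ in closed form, identify the resulting Euler product as a familiar Hecke $L$-function, and invoke its classical absolute convergence range.

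First I would apply the identity~(\ref{e:onevar-sugano}) of Sugano with $Y = p^{-s}$. Since $L_p(x,s) = 1/Q_p(p^{-s})$ in the notation of Theorem~\ref{t:sugano}, this identity can be rearranged into
$$
L_p(x,s) \;=\; \frac{1}{1 - \lambda_p p^{-s-1/2} + \left(\frac{\field}{p}\right) p^{-2s-1}} \cdot \sum_{l\geq 0} U_p^{l,0}(a,b)\, p^{-ls}.
$$
The prefactor depends only on $p$ and $s$, not on $x$. Moreover, for $\Reel(s)>1/2$, the series on the right converges uniformly on $X_p$ (each $U_p^{l,0}$ is bounded by a polynomial in $l$ on the tempered set, by Lemma~\ref{lm:bounds-ulm}), so we may interchange integration and summation.

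Next, integrating against $\mu_p$ and applying Proposition~\ref{propplancherelmeasure} kills every term in the sum except $l=0$, leaving
$$
\int_{X_p} L_p(x,s)\,d\mu_p(x) \;=\; \frac{1}{1 - \lambda_p p^{-s-1/2} + \left(\frac{\field}{p}\right) p^{-2s-1}}.
$$
A short case analysis (split, inert, ramified) then identifies the Euler product over $p$ as the Hecke $L$-function $L(\Lambda, s+1/2)$ of $\field = \Q(\sqrt{-d})$ attached to the ideal class character $\Lambda$, since for split $p = \p_1\p_2$ the quadratic factor in the denominator factors as $\prod_{i}(1-\Lambda(\p_i)p^{-s-1/2})$, for inert $p$ it becomes $(1-p^{-2s-1}) = (1-\Lambda((p))N((p))^{-(s+1/2)})$, and for ramified $p$ it is $(1-\Lambda(\p)p^{-s-1/2})$ with $N(\p)=p$.

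Finally, $\Lambda$ is a unitary idele class character of $\field$, so $L(\Lambda, w)$ is a Hecke $L$-function with an Euler product that is absolutely convergent for $\Reel(w)>1$ by the standard bound $|\Lambda(\p)|=1$. Translating, this gives absolute convergence of our Euler product for $\Reel(s) > 1/2$, as claimed. There is no real obstacle in this argument --- the only point requiring a small check is the matching of local factors across the three splitting behaviors, and this is already implicit in the deduction of~(\ref{e:firstaverage}) at the end of Section~\ref{ssec-averaging}.
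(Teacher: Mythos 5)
Your proof is correct and follows essentially the same route as the paper's: the paper itself obtains the closed form $\int_{X_p}L_p(x,s)\,d\mu_p(x)=\bigl(1-\lambda_p p^{-s-1/2}+\bigl(\tfrac{\field}{p}\bigr)p^{-2s-1}\bigr)^{-1}$ from~(\ref{e:onevar-sugano}) and Proposition~\ref{propplancherelmeasure} at the end of Section~\ref{ssec-averaging}, identifies the resulting Euler product as $L(\Lambda,s+1/2)$, and then dismisses the lemma as "obvious." You have simply spelled out the convergence justification and the split/inert/ramified factorization that the paper leaves implicit.
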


\begin{proof}
According to what we have said, we have
$$
\int_{Y_p}{L_p(x,s)d\mu_p(x)}=
\frac{1}{1 -\lambda_pp^{-1/2-s} +
\left(\frac{\field}{p}\right)p^{-1-2s}}
$$
and the result is then obvious.
\end{proof}


\subsection{Weights and averages over Saito-Kurokawa lifts}
\label{sec:sk}

In this section, we will explicitly compute $\weight_{k}^F :=
\weight_{k, d, \Lambda}^F$ when $F$ is a Saito-Kurokawa lift. This
will lead to a stronger version of the second part of
Proposition~\ref{pr:rp}. This simple fact is included because it may
be helpful for further investigations.
\par
Let $k>2$ be even and let $\classical_{2k-2}^*$ denote the Hecke basis
of the space of holomorphic cusp forms on $\GL(2)$ of weight $2k-2$
and full level. Let $F \in \cusp_{k}^*$ be the (unique) Saito-Kurokawa
lift of $f \in \classical_{2k-2}^*$, so that the spinor $L$-function
is given by~(\ref{eq-spinor-st}). As usual, we let
$$ 
F(Z)=\sum_{T>0}a(F,T)e(\Tr (TZ))
$$ 
be the Fourier expansion of $F$. It is well-known (see~\cite{eichzag}
for instance) that $a(F,T)$ then depends only on the determinant of
$T$. In particular, it follows that $\weight_{k, d, \Lambda}^F = 0$
whenever $\Lambda \neq 1$. So, we assume that $\Lambda = 1$ and
shorten $\weight_{k, d, 1}^F$ to $\weight_k^F$.
\par
Let
$$
\tilde{f} (z) = \sum_{n>0}c(n) e(n z)
$$ 
be the cusp form of half-integer weight $k - \frac{1}{2}$ on
$\Gamma_0(4)$ that is associated to $f$ via the Shimura
correspondence. Then, by~\cite[Th. 6.2, equation (6)]{eichzag}, we have
\begin{equation}\label{sk1}
a(T) = c(d)
\end{equation}
for any positive-definite semi-integral matrix $T$ of determinant
$d/4$. On the other hand, by~\cite{brown07}, we have
\begin{equation}\label{sk2}
  \langle F, F \rangle = \frac{k-1}{2^4 \cdot 3^2 \cdot \pi} \cdot
  \frac{|c(d)|^2}{d^{k-\frac{3}{2}}} \cdot \frac{L(f, 1)}{L(f\times \chi_d,
    \frac{1}{2})} \langle f, f \rangle,
\end{equation}
whenever $c(d)$ is non-zero.

Using~\eqref{sk1},~\eqref{sk2} and the definition of $\weight_k^F$, it
follows that 
$$
\weight_k^F = \frac{(48\pi)^2 h(-d)}{w(-d)(k-1)(k-2)}
\frac{ \Gamma(2k - 3)}{(4 \pi)^{2k-3} \langle f, f \rangle}
\frac{L(f \times \chi_d, \frac{1}{2})}{L(f,1)}.
$$
\par
Now we consider the average
$$
\sum_{f \in \classical_{2k-2}^*}
\frac{\Gamma(2k-3)}{(4\pi)^{2k-3}\langle f,f\rangle} \frac{L(f \times
  \chi_d, \frac{1}{2})}{L(f,1)}.
$$
\par
It is very likely that one can prove an asymptotic formula for this
quantity as $k\ra +\infty$ (possibly using the methods of Ramakrishnan
and Rogawski in~\cite{rogram}). However, to deal with it quickly, we
observe first that $L(f \times \chi_d, \frac{1}{2})$ and $L(f,1)$ are
both non-negative (e.g., because $L(f,s)$ is real-valued, has no zero
with $\Reel(s)>1$ and tends to $1$ as $s\ra +\infty$, and the ratio is
non-negative by the above). Then, using the fact that $L(f,s)$ has no
Siegel zeros (a result of Hoffstein and
Ramakrishnan~\cite{hoffstein-ramakrishnan}), one gets in the usual way
a lower bound
$$
L(f,1)\gg \frac{1}{\log k},
$$
and therefore 
$$
\sum_{f \in \classical_{2k-2}^*} 
\frac{\Gamma(2k-3)}{(4\pi)^{2k-3}\langle f,f\rangle}
\frac{L(f \times \chi_d, \frac{1}{2})}{L(f,1)}
\ll
(\log k)\sum_{f \in \classical_{2k-2}^*} 
\frac{\Gamma(2k-3)}{(4\pi)^{2k-3}\langle f,f\rangle}
L(f \times \chi_d, \frac{1}{2}).
$$
\par
Next, from the results of Duke~\cite{duke}, one gets
$$
\sum_{f \in \classical_{2k-2}^*}
\frac{\Gamma(2k-3)}{(4\pi)^{2k-3}\langle f,f\rangle} L(f \times
\chi_d, \frac{1}{2})\ll 1
$$
for $k\geq 2$, where the implied constant depends on $d$. The
following Proposition, which strengthens Proposition~\ref{pr:rp}, (2),
is then an immediate consequence:

\begin{proposition} 
  Suppose $\alpha(F)$ is a complex valued function defined for
  Saito-Kurokawa lifts and satisfying for some $\delta>0$ the
  inequality 
$$
\alpha(F) \ll k^{2- \delta}
$$ 
for $F\in \cusp_{2k}^{\sharp}$. Then
$$
\lim_{k\ra +\infty}{ \expect_k(\alpha(F)\mathbf{1}_{\{\text{$F\in
      \cusp_k^{\sharp}$}\}}) } =0.
$$
\end{proposition}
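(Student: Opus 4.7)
The proof will reuse the weight formula and the averaged bound that were just established, so the argument is essentially a bookkeeping exercise. First, I would observe that since the total weighted mass $\sum_{F\in\cusp_{2k}^*}\weight_{2k}^F$ tends to $1$ as $k\to+\infty$ (by applying Proposition~\ref{pr:petersson} to the constant polynomial $1$), it is enough to prove the unnormalized statement
\[
\sum_{F\in\cusp_{2k}^{\sharp}}\weight_{2k}^F\,|\alpha(F)| \longrightarrow 0.
\]
This replaces $\expect_{2k}$ by a plain sum, which is convenient since our hypothesis on $\alpha$ is an absolute bound.

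Next, I would insert the explicit formula for $\weight_{2k}^F$ derived above for Saito--Kurokawa lifts. Writing $F$ as the lift of $f\in\classical_{4k-2}^*$ and substituting $k\leftrightarrow 2k$ in the displayed formula, we have
\[
\weight_{2k}^F = \frac{(48\pi)^2 h(-d)}{w(-d)(2k-1)(2k-2)}\cdot\frac{\Gamma(4k-3)}{(4\pi)^{4k-3}\langle f,f\rangle}\cdot\frac{L(f\times\chi_d,\tfrac12)}{L(f,1)},
\]
so that the critical gain is the factor $((2k-1)(2k-2))^{-1}\asymp k^{-2}$ out front. Combining this with the hypothesis $|\alpha(F)|\ll k^{2-\delta}$ bounds the contribution of each $F$ by a harmless constant (in $k$) times the $L$-value ratio.

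Then I would quote verbatim the averaged estimate that was already assembled in the section: by Hoffstein--Ramakrishnan~\cite{hoffstein-ramakrishnan}, $L(f,1)\gg (\log k)^{-1}$, and by Duke~\cite{duke},
\[
\sum_{f\in\classical_{4k-2}^*}\frac{\Gamma(4k-3)}{(4\pi)^{4k-3}\langle f,f\rangle}L(f\times\chi_d,\tfrac12)\ll 1,
\]
where the implied constant depends only on $d$. Hence
\[
\sum_{f\in\classical_{4k-2}^*}\frac{\Gamma(4k-3)}{(4\pi)^{4k-3}\langle f,f\rangle}\cdot\frac{L(f\times\chi_d,\tfrac12)}{L(f,1)}\ll \log k.
\]

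Putting these ingredients together, since the Saito--Kurokawa map is a bijection between $\cusp_{2k}^{\sharp}$ and $\classical_{4k-2}^*$, the sum we want to bound is, up to the absolute constants from $h(-d)$, $w(-d)$ and $\pi$,
\[
\sum_{F\in\cusp_{2k}^{\sharp}}\weight_{2k}^F|\alpha(F)| \ll k^{-2}\cdot k^{2-\delta}\cdot\log k = k^{-\delta}\log k,
\]
which tends to $0$. There is no genuine obstacle: all the analytic input (positivity of the $L$-value ratio, non-existence of Siegel zeros for $L(f,s)$, and Duke's first-moment bound) was already assembled in the paragraph preceding the proposition, and the proof reduces to matching the power-of-$k$ gain in $\weight_{2k}^F$ against the polynomial growth allowed by the hypothesis. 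The only thing to be slightly careful about is the normalization: one must verify that the asymptotic total mass is indeed $1$ (which follows from the case $L=M=1$ of Proposition~\ref{pr:petersson}) before dropping it from the averaging operator.
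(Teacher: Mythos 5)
Your proof is correct and follows essentially the same route the paper takes: the paper assembles exactly these ingredients (the explicit formula for $\weight_{k}^F$ on Saito--Kurokawa lifts, the Hoffstein--Ramakrishnan lower bound $L(f,1)\gg(\log k)^{-1}$, and Duke's first-moment bound) in the paragraphs preceding the proposition, and then declares the proposition an immediate consequence. You have made the bookkeeping explicit — matching the $k^{-2}$ gain from the factor $(k-1)(k-2)$ in the weight against the hypothesis $\alpha(F)\ll k^{2-\delta}$, and pairing this with the $\ll\log k$ averaged bound and the normalization $\sum_F\weight_{2k}^F\to 1$ — which is precisely what the paper leaves implicit.
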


Using weak bounds, like the convexity bound, this applies for instance
to $\alpha(F)=L(F,1/2+it)$ for fixed $t\not=0$.

\subsection{Low-lying zeros, Katz-Sarnak symmetry type and
  B\"ocherer's conjecture}
\label{sec:lowlying}

The determination of the distribution of low-lying zeros of the spin
$L$-functions (assuming the Generalized Riemann Hypothesis) for
restricted test functions is not difficult once a quantitative
equidistribution statement is known. Conjecturally, the answer
indicates which ``symmetry type'' (in the sense of Katz-Sarnak) arises
for the family. However, we will see that the answer in our case is
surprising, and gives some global evidence towards a well known
conjecture of B\"ocherer.

We now prove Theorem~\ref{th:low-lying}. This type of computation is
quite standard by now, and is known to succeed as soon as
``approximate orthogonality'' has been proved with a power saving with
respect to the analytic conductor, which is the case thanks to our
quantitative equidistribution theorem (precisely,
from~(\ref{eq:ortho-quant})). We may therefore be brief, as far as
technical details are concerned (we refer to,
e.g.,~\cite{duenez-miller}, where families derived from classical
$\GL(2)$ cusp forms are treated with respect to the weight). However,
since the main term arising from this computation has some meaning, we
must justify it carefully.

As before, note that although Theorem~\ref{th:low-lying} was stated in
the introduction only for averages with respect to the weight
$\weight_{k}^F := \weight_{k, d, \Lambda}^F$ in the special case
$d=4$, $\Lambda=1$, we actually prove it for any $d$ (we stick to
$\Lambda = 1$).

\begin{proof}[Proof of Theorem~\ref{th:low-lying}]
  Throughout this proof, $\weight_{k}^F$ denotes $\weight_{k,d,1}^F$.
  For given $F$, we write
$$
-\frac{L'}{L}(F,s)=\sum_{n\geq 1}{c(F,n)\Lambda(n)n^{-s}}
$$
the logarithmic derivative of the spinor $L$-function which is
supported on powers of primes and where $\Lambda(n)$ is the von
Mangoldt function and, for $n=p^m$, $m\geq 1$, we have
$$
c(F,p^m)=a_p^m+a_p^{-m}+b_p^m+b_p^{-m}=
\Tr(\pi_p(F)^m),
$$
where $\pi_p(F)$ is interpreted as a conjugacy class in $\USp(4,\C)$.
\par
We can apply the following form of the ``explicit formula'' (see,
e.g.,~\cite[Th. 5.12]{ant}) to relate sums over zeros to sums over
primes involving those coefficients: denoting
$$
\gamma(s)=(2\pi)^{-2s}\Gamma(s+1/2)\Gamma(s+k-3/2)
$$
the common gamma factor for all $L(F,s)$, for any test-function $\psi$
which is even and of Schwartz class on $\R$, we have
$$
\sum_{\rho}{\psi\Bigl(\frac{\gamma}{2\pi}\Bigr)}=
\frac{1}{2\pi}\int_{\R}{\Bigl(\frac{\gamma'}{\gamma}(1/2+it)+
  \frac{\gamma'}{\gamma}(1/2-it)\Bigr)\psi(x)dx}
-2\sum_{n}{\hat{\psi}(\log n)\frac{c(F,n)\Lambda(n)}{\sqrt{n}}}.
$$
\par
We apply this to
$$
\psi(x)=\varphi\Bigl(\frac{x}{2\pi}\log(k^2)\Bigr),\quad\quad
\hat{\psi}(t)=\frac{\pi}{\log k}
\hat{\varphi}\Bigl(\frac{\pi t}{\log k}\Bigr)
$$
where $\varphi$ is an even Schwartz function with Fourier transform
supported in $[-\alpha,\alpha]$. After treating the gamma factor using
the formula
$$
\frac{\Gamma'}{\Gamma}(k-1+it)+\frac{\Gamma'}{\Gamma}(k-2-it)=
2\log k+O(t^2 k^{-2}),
$$
which follows from Stirling's formula (see, e.g.,~\cite[\S 3.1.1, \S
3.1.2]{duenez-miller} for precise details of these computations) and
spelling out the von Mangoldt function, we obtain
\begin{equation}
D_{\varphi}(F)=\int_{\R}{\varphi(x)dx}
-\frac{2}{\log(k^2)}\sum_{m\geq 1}{
\sum_{p}{\frac{\log p}{p^{m/2}}
c(F,p^m)
\hat{\varphi}\Bigl(m\frac{\log p}{\log(k^2)}\Bigr)
}
}
+O((\log k)^{-1}).
\end{equation}
\par
Averaging over $F$ leads to
$$
\expect_k(D_{\varphi}(F))
=\hat{\varphi}(0)
-\frac{2}{\log(k^2)}\sum_{m\geq 1}{
\sum_{p}{\frac{\log p}{p^{m/2}}
\expect_k(c(F,p^m))
\hat{\varphi}\Bigl(m\frac{\log p}{\log(k^2)}\Bigr)
}
}
+O((\log k)^{-1}).
$$
\par
As usual, easy estimates give
$$
\lim_{k\ra +\infty}
\frac{1}{\log(k^2)}
\sum_{m\geq 3}{
\sum_{p}{\frac{\log p}{p^{m/2}}
\expect_k(c(F,p^m))
\hat{\varphi}\Bigl(m\frac{\log p}{\log(k^2)}\Bigr)
}
}=0
$$
(the series over primes being convergent even without the
compactly-supported test function).
\par
In the term $m=1$, we have
\begin{align}
\expect_k(c(F,p))&= \expect_k(a_p+b_p+a_p^{-1}+b_p^{-1})
\nonumber\\
&=\expect_k(U_p^{1,0}(\pi_p(F)))+\frac{\lambda_p}{\sqrt{p}}
=\frac{\lambda_p}{\sqrt{p}}+O(p^{1+\eps}k^{-2/3})
\label{eq-m1}
\end{align}
by~(\ref{eq-u10}) and~(\ref{eq:ortho-quant}), and hence the
contribution of $m=1$, which is given by
$$
\frac{2}{\log(k^2)}\sum_{p}{\frac{\log p}{p^{1/2}}
\expect_k(c(F,p))
\hat{\varphi}\Bigl(\frac{\log p}{\log(k^2)}\Bigr)
}
$$
is equal to
$$
\frac{2}{\log(k^2)}
\sum_{p}{\frac{\lambda_p\log p}{p}
\hat{\varphi}\Bigl(\frac{\log p}{\log(k^2)}\Bigr)}
+O\Bigl(\frac{1}{k^{2/3}\log k}\sum_{p\leq k^{2\alpha}}
{p^{1/2+\eps}}\Bigr)
=M_k(\varphi)+O(k^{5\alpha/2-2/3+\eps}).
$$
for any $\eps>0$, where
\begin{align*}
M_k(\varphi)&=\frac{2}{\log(k^2)}
\sum_{p}{\frac{\lambda_p\log p}{p}
\hat{\varphi}\Bigl(\frac{\log p}{\log(k^2)}\Bigr)}
\\
&=2\int_{[1,+\infty[}{
\hat{\varphi}\Bigl(\frac{\log y}{\log(k^2)}\Bigr)
\frac{1}{\log(k^2)}\frac{dy}{y}+o(1)
}\\
&=2\int_{[0,+\infty[}{\hat{\varphi}(x)dx}=\varphi(0)+o(1),
\end{align*}
(since $\varphi$ is even), by summation by parts using the Prime
Number Theorem and the fact that $\lambda_p=2$ or $0$ for primes with
asymptotic density $1/2$ each, so the average value is $1$ (see,
e.g.,~\cite[Lemma 2.7]{duenez-miller}).
\par
Now we consider the term $m=2$. Although we could appeal to the
general estimate~(\ref{eq:quant-local}), we will use an explicit
decomposition and~(\ref{eq:ortho-quant}). First,
using~(\ref{e:onevar-sugano}), we have
\begin{equation}\label{eq-u20}
U^{2,0}_p(\pi_p(F))=1-\frac{\lambda_p}{\sqrt{p}}U^{1,0}_p(\pi_p(F))
+c(F,p^2)+\tau(\pi_p(F))+O(p^{-1})
\end{equation}
where the implied constant is absolute and 
$$
\tau(a,b)=1+ab+ab^{-1}+a^{-1}b+(ab)^{-1}
$$ 
as in Theorem~\ref{t:sugano}. By~(\ref{eq-u01}), we have
$$
\tau(\pi_p(F))(1+\alpha_p)=U^{0,1}_p(\pi_p(F))+
\beta_p\sigma(\pi_p(F))+O(p^{-1})
$$
where the quantities $\alpha_p\ll p^{-1}$, $\beta_p\ll p^{-1/2}$ do
not depend on $F$, and the implied constants are absolute. Averaging
(with $\expect_k(\cdot)$) from this last formula and
using~(\ref{eq-m1}), we find
$$
\expect_k(\tau(\pi_p(F)))\ll p^{-1}+p^{3/2+\eps}k^{-2/3},
$$
and from~(\ref{eq-u20}), we therefore derive
$$
\expect_k(c(F,p^2))=-1+O(p^{-1}+p^{2+\eps}k^{-2/3})
$$
for any $\eps>0$. Consequently, we see that the term $m=2$, after
averaging, is given by
\begin{align*}
\frac{2}{\log(k^2)}\sum_{p}{\frac{\log p}{p}
\expect_k(c(F,p^2))
\hat{\varphi}\Bigl(2\frac{\log p}{\log(k^2)}\Bigr)
}&=
-\frac{2}{\log(k^2)}\sum_{p}{
\frac{\log p}{p}\hat{\varphi}\Bigl(2\frac{\log p}{\log(k^2)}\Bigr)
}+\\
&+O\Bigl(
\frac{1}{k^{2/3}\log k}\sum_{p\leq k^{\alpha}}{p^{1+\eps}
}+(\log k)^{-1}\Bigr)\\
&=-N_k(\varphi)+O((\log k)^{-1}+k^{2\alpha-2/3+\eps}),
\end{align*}
where
$$
N_k(\varphi)=\frac{2}{\log(k^2)}\sum_{p}{ \frac{\log
    p}{p}\hat{\varphi}\Bigl(2\frac{\log p}{\log(k^2)}\Bigr)
}=\frac{\varphi(0)}{2}+o(1),
$$
by computations similar to that of $M(\varphi)$ before.
\par
We notice that the contribution of the main terms for $m=1$
and $m=2$ together are
$$
-\varphi(0)+\frac{\varphi(0)}{2}=-\frac{\varphi(0)}{2},
$$
which gives a main term
$$
\hat{\varphi}(0)-\frac{\varphi(0)}{2}=\int_{\R}{\varphi(x)d\sigma_{Sp}(x)}.
$$
\par
Moreover, the error terms in both are negligible
as long as $5\alpha/2-2/3<0$, i.e., $\alpha<4/15$. Under this
condition, we obtain therefore
$$
\expect_k(D_{\varphi}(F))=
\int_{\R}{\varphi(x)dx}+o(1),
$$
as $k\ra +\infty$, which is the desired conclusion.
\end{proof}

\begin{remark}
  It is interesting to note that, to understand the logarithmic
  derivative of the spin $L$-function $L(F,s)$, one needs to involve
  the average of the quantity
$$
\tau(\pi_p(F))=1+a_pb_p+a_pb_p^{-1}+a_p^{-1}b_p+(a_pb_p)^{-1},
$$
which is the coefficient of $p^{-s}$ in the \emph{projection}
$L$-function $L(F,\mathrm{pr},s)$ (see~(\ref{eq-pr-l-function})). This
illustrates the fact that, in the study of automorphic forms on
groups of rank $r\geq 2$, all Langlands $L$-functions (or, at least,
those associated with the $r$ fundamental representations of the dual
group) are intimately linked, and must be considered together.
\end{remark}

We now comment on the relation of Theorem~\ref{th:low-lying} with
B\"ocherer's conjecture. In the literature, a density of low-lying
zeros given by the measure $d\sigma_{Sp}$ (as we proved is the case)
is taken as an indication of \emph{symplectic symmetry type} (the
basic example being the family of real Dirichlet
characters). Intuitively, these are families of central $L$-values of
self-dual $L$-functions with sign of functional equation $+1$ for
which the central point of evaluation has no special meaning.
However, although our families are indeed self-dual, a symplectic
symmetry seems very unlikely for our family, for at least two reasons:
first, $1/2$ is a critical point in the sense of Deligne, and second,
the forms of odd weight have functional equations with sign $-1$. 
\par
There is a natural explanation for the discrepancy: the Fourier
coefficient $|a(d,1;F)|^2$ appearing in the weight
$$
\weight_F^{k}=c_{k,d} \frac{|a(d,1;F)|^2}{\langle F, F\rangle}.
$$
involves probably deeper arithmetic content than one might naively
think. Indeed, in~\cite{boch-conj}, B\"ocherer made the following
remarkable conjecture:

\begin{conjecture}[B\"ocherer's Conjecture]
  For any $F \in \cusp_{2k}^*$, there exists a constant $C_F$
  depending only on $F$ such that for all fundamental discriminants
  $-d <0$ one has $$L(F \times \chi_d , \frac{1}{2}) = C_F\cdot
  d^{1-2k}w(-d)^{-2}\cdot |a(d,1;F)|^2,$$ where $\chi_d$ denotes the
  quadratic character associated to the extension $\Q(\sqrt{-d})$.
\end{conjecture}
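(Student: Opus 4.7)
The plan is to prove B\"ocherer's conjecture via the Furusawa integral representation of the twisted spinor $L$-function. Furusawa expresses $L(F \times \chi_d, s)$ as a zeta integral on $\GSp(4) \times \GL(2) \subset \mathrm{GU}(2,2)$, pairing $\Phi_F$ against an Eisenstein series on the larger group; upon unfolding, this integral becomes a global Bessel period of $\Phi_F$ of type $(S(-d),1,\psi)$ integrated against a $\GL(2)$-Whittaker function. By Proposition~\ref{classicalbesselprop} with $L=M=1$, that Bessel period at the identity is proportional to $a(d,1;F)/h(-d)$. A squared period identity, in the spirit of Ichino--Ikeda refinements of Gross--Prasad, should then convert the global integral into a quantity proportional to $|a(d,1;F)|^2$ on one side and $L(F \times \chi_d, 1/2)$ on the other.

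Concretely, I would proceed in five steps. First, unfold the global integral into an Eulerian product using the uniqueness of the Bessel model (Novodvorsky--Piatetski-Shapiro). Second, at unramified finite places, match the local zeta integral to the correct local $L$-factor by combining Sugano's formula (Theorem~\ref{t:sugano}) with the Casselman--Shalika formula for $\GL(2)$; this produces $L_p(F \times \chi_d, s)$ divided by the standard normalizing $L$-factors of the Eisenstein series. Third, at primes $p \mid d$, choose test vectors making the local zeta integrals explicit and non-vanishing. Fourth, compute the archimedean local integral using the explicit form of the archimedean Bessel function attached to the holomorphic discrete series of weight $2k$. Fifth, divide through by $\langle F, F \rangle$ (which already appears in $\weight_k^F$) and specialize to $s = 1/2$ to read off the identity.

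The main obstacle is the archimedean computation together with the verification that the proportionality constant $C_F$ can be chosen independent of $d$. Unlike the unramified case, the archimedean Bessel function on a holomorphic discrete series lacks the clean generating-function description enjoyed by the Sugano polynomials, so the local integral at infinity requires delicate special-function analysis (e.g., in terms of confluent hypergeometric functions). Moreover, showing that the $d$-dependence on the right-hand side is precisely $d^{1-2k}w(-d)^{-2}|a(d,1;F)|^2$ demands that the combined ramified-local contributions at primes $p \mid d$ collapse to a predictable, essentially $d$-independent form; this is where compatible test-vector choices at each bad prime, and thus a fine understanding of local Bessel models at ramified and dyadic places, become essential. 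Implementing this strategy, in the spirit of Furusawa--Martin and of the ongoing work \cite{furmarsha} cited in the paper, should yield the full conjecture.
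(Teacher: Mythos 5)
The statement you are asked to prove is a \emph{conjecture}, and the paper does not prove it: it is stated precisely because it is open, and the authors say explicitly that ``at the current time, this conjecture is not rigorously known for any cusp form in $\cusp_{2k}^*$ which is not a Saito-Kurokawa lift.'' The paper's contribution (Theorem~\ref{th:low-lying} and Section~\ref{sec:lowlying}) is to give indirect, averaged, \emph{global} evidence for B\"ocherer's conjecture by showing that the low-lying-zero statistics computed with the weight $\weight^F_{2k}$ are consistent with the hypothesis~\eqref{eq:bocherer}; nowhere does the paper, or any reference it cites, establish the conjecture itself.

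Your outline is a fair description of the program pursued by Furusawa, Martin and Shalika (and, in a different guise, by Prasad--Takloo-Bighash and Ichino--Ikeda), but it is not a proof, and the gaps you flag are not technicalities that can be waved through. The decisive missing ingredient is the refined period identity in step five: converting the global Bessel period into a product of $L(F,1/2)L(F\times\chi_d,1/2)$ against local integrals is precisely the $(\SO(5),\SO(2))$ case of the Ichino--Ikeda/Prasad--Takloo-Bighash conjecture, which is itself unproved at the level of generality needed. Your steps two through four also do not reduce to existing results: Sugano's formula and the uniqueness of Bessel models give the Eulerian structure and the unramified factors, but the archimedean Bessel function for a holomorphic discrete series of $\GSp(4,\R)$ and the local integrals at primes dividing $d$ have not been computed in a form that yields a $d$-independent constant $C_F$. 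Finally, even granting all local computations, you would still need a nonvanishing statement for the relevant test vectors and a way to handle the case $a(d,1;F)=0$ (where the conjecture asserts vanishing of the $L$-value, which does not follow from an integral representation alone). In short, the proposal is a plausible blueprint that mirrors the literature's strategy, but every one of its load-bearing steps is, at the time of this paper, open; nothing here should be read as a proof.
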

\par
B\"ocherer proved this conjecture for Eisenstein series and
Saito-Kurokawa lifts in~\cite{boch-conj}. Later, he and Schulze-Pillot
proved an analogue of this conjecture (for Siegel modular forms with
level) in the case of Yoshida lifts. More recently, works of
Furusawa-Shalika~\cite{furusawa-shalika-fund},
Furusawa-Martin~\cite{furusawa-martin} and
Furusawa-Martin-Shalika~\cite{furmarsha} have tried to tackle this
problem using the relative trace formula.
\par
B\"ocherer did not make any speculation about the value of the
quantity $C_F$. However recent works such as~\cite{furusawa-martin}
give some inkling of what to expect.
\par
We now show that a certain assumption on $C_F$ explains our result on
low-lying zeroes. To be more definite, we make the following
hypothesis.
\par
\begin{hypothesis}
  For non-Saito-Kurokawa forms $F\in\cusp_{2k}^*$, we have
\begin{equation}\label{eq:bocherer}
\weight_F^{2k}=L(F,\frac{1}{2})L(F\times
\chi_d,\frac{1}{2})L(\chi_d, 1)^{-1}\gamma(F)
\end{equation}
in terms of spinor $L$-functions, where $\gamma(F)>0$ is
``well-behaved'', in particular 
$$
\sum_{F \in \cusp_{2k}^*}\gamma(F)
$$
has a positive limiting average value as $k\ra +\infty$, and
$\gamma(F)$ is asymptotically independent of the central special
$L$-values.
\end{hypothesis}

In terms of Fourier coefficients, this hypothesis is equivalent to the
following specific variant of B\"ocherer's conjecture: for all
$F\in\cusp_{2k}^*$ that is not a Saito-Kurokawa lift, we should have
\begin{equation}\label{eq:bocherer2}
  L(F,\frac{1}{2})L(F\times
  \chi_d,\frac{1}{2}) = 4 \pi c_{2k}
  \gamma(F)^{-1}(d/4)^{1-2k}w(-d)^{-2} \frac{|a(d,1;F)|^2}{\langle F,
    F \rangle}.
\end{equation} 

\begin{remark}Such a formulation (involving two central values, or in
  other words a central value for the base-change of $F$ to the
  quadratic field $\Q(\sqrt{-d})$) is strongly suggested
  by~\cite[(1.4)]{furusawa-martin} and~\cite{prasadbighash}. It is
  also compatible with a conjecture of Prasad and
  Takloo--Bighash~\cite{prasadbighash}, which itself is an analogue
  (for the case of Bessel periods) of the remarkable Ichino-Ikeda
  conjecture~\cite{ichino-ikeda} dealing with $(SO(n), SO(n-1))$
  periods. In this context, it is also worth mentioning that the
  question of vanishing of the Bessel period, i.e., the vanishing of
  $a(d,\Lambda;F)$, is closely tied with the Gross-Prasad conjecture
  for $(SO(5), SO(2))$.
\end{remark}
\par
Under our stated hypothesis~\eqref{eq:bocherer}, we consider the
crucial average
$$
\sum_{F\in\cusp_{2k}^*}{
\omega_F^{2k}c(F,p)
}
$$
for a fixed prime $p$. Our goal is to show that this allows us to
recover naturally the formula~(\ref{eq-m1}) from which the
``mock-symplectic'' symmetry-type arose in the proof of
Theorem~\ref{th:low-lying} (the contribution of $p^2$ was consistent
with the expected orthogonal symmetry type). Thus,
assuming~(\ref{eq:bocherer}), we need to compute the average
$$
\sum_{F\in\cusp_{2k}^{\sharp}}{
\gamma(F)L(F,\frac{1}{2})L(F\times
\chi_d,\frac{1}{2})L(\chi_d, 1)^{-1}c(F,p)},
$$
\par
Since $c(F,p)=\lambda(F,p)$ is also the $p$-th coefficient of the
Dirichlet series $L(F,s)$, and since the analytic conductor of both
$L$-functions is about $k^2$, we see by applying a suitable
Approximate Functional Equation (and recalling that the sign of the
functional equation is $1$ for both $L$-functions) that this is
roughly
$$
2L(\chi_d, 1)^{-1}\sum_{m,n\leq k}{
\frac{\chi_d(n)}{\sqrt{mn}}\sum_{F\in\cusp_{2k}^{\sharp}}{
\gamma(F)\lambda(F,m)\lambda(F,n)\lambda(F,p)}}
$$
(the sums should involve a smooth cutoff).
\par
Under the (reasonable) assumption that the coefficients of the
Dirichlet series are asymptotically orthogonal under this
average,\footnote{\ This depends on the hypothesis that $\gamma(F)$ is
  innocuous.}  one is led to the guess that the terms which contribute
are those with $m=np$ or $n=mp$, and thus one should have
$$
\sum_{F\in\cusp_{2k}^{\sharp}}{ \gamma(F)L(F,\frac{1}{2})L(F\times
  \chi_d,\frac{1}{2})L(\chi_d, 1)^{-1}c(F,p)} \approx
\frac{1+\chi_d(p)}{\sqrt{p}}=\frac{\lambda_p}{\sqrt{p}},
$$
as $k\ra +\infty$, where the $L(1,\chi_d)$ has cancelled out with
$$
\sum_m{\frac{\chi_d(m)}{\sqrt{m}}}\approx L(1,\chi_d)
$$
(again with a smooth cutoff).
\par
But this is exactly what we proved in~(\ref{eq-m1}), and what led to
Theorem~\ref{th:low-lying}. We therefore interpret this as a (global,
averaged) confirmation of B\"ocherer's conjecture in the
form~(\ref{eq:bocherer2}).


\section*{Appendix: comparison with $\GL(2)$-families}

This section is intended to summarize some basic facts about
holomorphic Siegel modular forms and their ad\'elic counterparts, by
comparison with the case of classical modular forms for congruence
subgroups of $\SL(2,\Z)$. We also give references for the
$\SL(2)$-analogues of the results in this paper.
\par
\begin{itemize}
\item The closest analogue of our family of cusp forms is the set
  $\classical_k^*$ of primitive holomorphic cusp forms of weight $k$
  for $\SL(2,\Z)$, with trivial nebentypus, counted according to the
  weight given by
$$
\weight_f=\frac{\Gamma(k-1)}{(4\pi)^{k-1}}\frac{1}{\langle f,f\rangle}.
$$
\par
In contrast with $\cusp_k^*$, this set is the unique Hecke-eigenbasis
of the space $\classical_k$ of cusp forms of weight $k$ and level $1$;
in our context, the corresponding multiplicity one theorem is not
known (because the Fourier coefficients are not functions of the Hecke
eigenvalues), and so the Hecke basis $\cusp_k^*$ is not necessarily
unique in $\cusp_k$.
\par
Another obvious distinction is the presence of the Fourier coefficient
$a(F,1)$ in~(\ref{e:weight}). As we saw, this has crucial arithmetic
content. A way to insert this aspect ``by hand'' into the classical
case is to consider the twisted weights
$$
\weight'_f=\alpha \weight_f L(f,1/2),\quad\text{ or }
\quad
\weight'_f=\alpha \weight_f L(f,1/2)L(f\times \chi_d,1/2),
$$
where $\alpha>0$ is a constant such that
$$
\sum_{f\in \classical_k}{\weight'_f}\ra 1,
$$
as $k\ra +\infty$. (The existence of the limit that makes this
normalization possible is essentially already in Duke's
paper~\cite[Prop. 2]{duke}, where the limit is with respect to the
level.)

\item The local equidistribution theorem for $\classical_k^*$, as $k\ra
  +\infty$, is the following: for any prime $p$, the map
sending $f$ to the $p$-component of the associated automorphic
representation of $\GL(2,\A)$ can be identified with the map
$$
p\mapsto \lambda_f(p)\in\R
$$
where $p^{\frac{k-1}{2}}\lambda_f(p)$ is the $p$-th Hecke eigenvalue,
or equivalently the $p$-th Fourier coefficient. By Hecke's bound, we
have $\lambda_f(p)\in [-2\sqrt{p},2\sqrt{p}]$, and the associated
representation of $\GL(2,\Q_p)$ is the unramified principal series
obtained from the unramified characters $\alpha$, $\beta$ of
$\Q_p^{\times}$ such that
$$
\alpha(p)\beta(p)=1,\quad\quad \alpha(p)+\beta(p)=\lambda_f(p).
$$
\par
Then, for any finite set of primes $\sop$, the measures $m_{\sop,k}$ defined
as the sum of Dirac measures at $\lambda_f(p)$ for $p\in
\classical_k^*$ converge weakly to the measure
$$
n_{\sop}=\prod_{p\in \sop}{\mu_{ST}}
$$
where $\mu_{ST}$ is the Sato-Tate measure, supported  on $[-2,2]$,
given there by
$$
\frac{2}{\pi}\sqrt{1-x^2/4}dx.
$$
\item The above fact is quite easy to prove. First, the Hecke relations
  describe $\lambda_f(L)$ in terms of the factorization of $L\geq 1$,
  namely
$$
\lambda_f(L)=\prod_{p\mid L}{U_{l_p}(\lambda_f(p))},
$$
where $U_l$ is the $l$-th Chebychev polynomial defined by
$$
U_l(2\cos\theta)=\frac{\sin((l+1)\theta)}{\sin\theta}.
$$
\par
These form a basis of polynomials in one variable, and hence span a
dense subset of $C([-2\sqrt{p},2\sqrt{p}])$, with
$$
\int{U_l(x)d\mu_{ST}(x)}=\delta(l,1).
$$
\item The second ingredient is the Petersson formula; indeed, for any
  $l_p\geq 0$, let
$$
L=\prod_{p\in \sop}{p^{l_p}},
$$
and then we have
\begin{align*}
\sum_{f\in\classical_k^*}{\weight_f \prod_{p\in \sop}{
U_{l_p}(\lambda_f(p))
}}&=
\sum_{f\in\classical_k^*}{\weight_f \lambda_f(L)}\\
&=\delta(L,1)-2\pi i^{-k} \sum_{c\geq 1}{c^{-1}
S(L,1;c)J_{k-1}\Bigl(\frac{4\pi \sqrt{L}}{c}\Bigr)}
\longrightarrow \delta(L,1)
\end{align*}
as $k\ra +\infty$, where $S(L,1;c)$ denotes the standard Kloosterman
sum.  This gives the local equidistribution statement. Note that, in
contrast with our results, the limiting measure at $p$ is independent
of $p$ in this case. 
\item The Hecke relations are analogues of Sugano's formula
  (Theorem~\ref{t:sugano}); the Petersson formula and the related
  orthogonality are the analogues of
  Proposition~\ref{pr:petersson}. On the other hand, the necessary
  work to go from Fourier coefficients (controlled by Poincar\'e
  series) to Hecke eigenvalues is completely absent from the classical
  case.
\item Analogues of the direct applications of
  Section~\ref{ssec:direct} were proved first, essentially, by
  Bruggeman~\cite{bruggeman} (analogue of Proposition~\ref{pr:rp} for
  Maass forms, where the Ramanujan-Petersson conjecture is not yet
  known); analogues of Proposition~\ref{pr:strong} are due to
  Sarnak~\cite{sarnak} (Maass forms) and Serre (holomorphic forms),
  though both counted the cusp forms with the natural weight $1$, and
  used the trace formula instead of the Petersson formula
  (correspondingly, their limiting distributions was different: at $p$
  they obtained the Plancherel measure for the unramified principal
  series of $\GL(2,\Q_p)$ with trivial central character).
\item Computations tantamount to working with the twisted weight
  $\omega'_f$ are also classical (in particular, the computation of
$$
\sum_{f}{\omega'_f \lambda_f(m)}
$$
for a fixed $m$ is a special case of the first moment computation
in~\cite{rankinselberglevel}, in the case where the level goes to
infinity and the Rankin-Selberg convolution is the weight $1$ theta
series with $L$-function $\zeta(s)L(s,\chi_4)$).
\item The analogue of B\"ocherer's conjecture for $\classical_k^*$ is
  the famous formula of Waldspurger~\cite{walds} which relates the
  value $L(f \times \chi_d, \frac{1}{2})$ for $f\in \classical_k^*$ to
  the squares of Fourier coefficients of modular forms of
  half-integral weight. However, these special values do not appear in
  the standard weights used for averaging $L$-functions. However, a
  weighted averaged version of Waldspurger's formula was proved by
  Iwaniec~\cite{iwaniec-walds} using identities for Kloosterman sums,
  and this may be considered as somewhat analogue to our
  Theorem~\ref{th-average-l-functions}.
\item The Saito-Kurokawa forms have no analogue in $\classical_k^*.$
  Indeed, all cusp forms of $\GL(2)$ (or $\GL(n)$) are expected to
  satisfy the Ramanujan-Petersson conjecture; for forms in
  $\classical_k^*$, this is a theorem of Deligne~\cite{deligne}. On
  the other hand, Saito-Kurokawa forms do not satisfy the generalized
  Ramanujan conjecture; this is due to the fact that they are CAP
  representations~\cite{ps-sk}.

\end{itemize}

\def\cprime{$'$} \newcommand{\noopsort}[1]{}

\end{document}